\documentclass[]{article}

\usepackage{dsfont}
\usepackage{graphicx}
\usepackage{amsmath}
\usepackage{amssymb}
\usepackage{amsthm}
\usepackage{mathrsfs}
\usepackage{pxfonts}
\usepackage{enumerate}
\usepackage{color}
\usepackage{mathdots}
\usepackage{sectsty}
\usepackage{tikz}
\usepackage{adjustbox}
\usepackage{enumitem}
\usepackage{caption}
\usepackage{bbold}
\usepackage[hidelinks]{hyperref}
\allowdisplaybreaks
\sectionfont{\scshape\centering\fontsize{11}{14}\selectfont}
\subsectionfont{\scshape\fontsize{11}{14}\selectfont}
\usepackage{fancyhdr}
\usepackage[nottoc,notlot,notlof]{tocbibind}

\newcommand\shorttitle{C$\beta$E moments, $\mathsf{Sine}_\beta$ correlations and stochastic zeta}
\newcommand\authors{T. Assiotis and J. Najnudel}

\newtheorem{thm}{Theorem}[section]
\newtheorem{cor}[thm]{Corollary}
\newtheorem{lem}[thm]{Lemma}
\newtheorem{defn}[thm]{Definition}
\newtheorem{rmk}[thm]{Remark}
\newtheorem{prop}[thm]{Proposition}

\newtheorem{conj}[thm]{Conjecture}

\newtheorem*{theorem*}{Theorem}

\title{\large \bf MOMENTS OF C$\beta$E FIELD PARTITION FUNCTION, $\mathsf{Sine}_{\beta}$ CORRELATIONS AND STOCHASTIC ZETA}
\author{\small THEODOROS ASSIOTIS AND JOSEPH NAJNUDEL}
\date{}

\begin{document}

\maketitle

\begin{abstract} 
We prove a conjecture of Fyodorov and Keating on  the supercritical moments of the partition function of the C$\beta$E field or equivalently the supercritical moments of moments of the characteristic polynomial of the C$\beta$E ensemble for general $\beta>0$ and general real moment exponents. Moreover, we give the first expression for all correlation functions of the $\mathsf{Sine}_\beta$ point process for all $\beta>0$. The main object behind both results is the Hua-Pickrell stochastic zeta function introduced by Li and Valk\'{o}.
\end{abstract}

\tableofcontents

\section{Introduction}

The purpose of this paper is to solve two seemingly different problems in random matrix theory: prove a conjecture of Fyodorov and Keating on moments of characteristic polynomials of the circular $\beta$ ensemble in Theorem \ref{MainThm1} and give a first expression for all $\beta>0$ of all correlation functions of the $\mathsf{Sine}_\beta$ point process in Theorem \ref{MainThm2}. Although these may seem quite different, the basic object behind both statements, the Hua-Pickrell stochastic zeta function of Li and Valk\'{o}, and also the proof technique, based on random orthogonal polynomials on the unit circle, are the same.

We begin the article by discussing the general background.

\subsection{Moments of the partition function of the C$\beta$E field}

Let $\mathbb{T}$ be the unit circle that we identify with $[-\pi,\pi)$. Define the $\textnormal{C}\beta \textnormal{E}_N$ ensemble to be the probability measure on $\mathbb{T}^N$ given by:
\begin{equation*}
\frac{1}{\mathcal{Z}_{N,\beta}}\prod_{1\le j <k \le N} \left|\mathrm{e}^{\textnormal{i}\theta_j}-\mathrm{e}^{\textnormal{i}\theta_k}\right|^\beta \mathrm{d}\theta_1\cdots \mathrm{d}\theta_N, \ \  \ \mathcal{Z}_{N,\beta}=(2\pi)^N \frac{\Gamma\left(\beta\frac{N}{2}+1\right)}{\Gamma\left(\frac{\beta}{2}+1\right)}.
\end{equation*}
We denote by $\mathbb{E}_{\textnormal{C}\beta \textnormal{E}_N}$ expectation with respect to it. For $\beta=2$ this is the law of eigenvalues of a Haar distributed random unitary matrix and is known as the Circular Unitary Ensemble (CUE), see \cite{ForresterBook}. The choices $\beta=1, 4$ also have a natural conjugation-invariant random matrix interpretation, see \cite{ForresterBook}.

Define the characteristic polynomial $\mathsf{X}_N(z)$, with $(\mathrm{e}^{\mathrm{i}\theta_j})_{j=1}^N$ $\textnormal{C}\beta \textnormal{E}_N$-distributed, by
\begin{equation*}
\mathsf{X}_N(z)\overset{\mathrm{def}}{=}\prod_{j=1}^N\left(1-z\mathrm{e}^{-\mathrm{i}\theta_j}\right)
\end{equation*}
and the  $\textnormal{C}\beta \textnormal{E}_N$ field $\mathsf{L}_N(\theta)\overset{\mathrm{def}}= \log |\mathsf{X}_N(\mathrm{e}^{\mathrm{i}\theta})|$ on $\mathbb{T}$. 

\begin{defn}
Let $\beta>0$, $k, s \in \mathbb{R}_+$. Define the moments of the partition function of $\mathsf{L}_N$ or moments of moments of $\mathsf{X}_N$, where the points $\{\mathrm{e}^{\mathrm{i\theta_j}}\}$  are $\textnormal{C}\beta \textnormal{E}_N$-distributed, by 
\begin{equation*}
\mathsf{M}^{(\beta)}_N(k;s)\overset{\mathrm{def}}{=}\mathbb{E}_{\textnormal{C}\beta \textnormal{E}_N}\left[\left(\frac{1}{2\pi}\int_{-\pi}^\pi\mathrm{e}^{2s\mathsf{L}_N(\theta)}\mathrm{d}\theta\right)^{k}\right]= \mathbb{E}_{\textnormal{C}\beta \textnormal{E}_N}\left[\left(\frac{1}{2\pi}\int_{-\pi}^\pi\left|\mathsf{X}_N(\mathrm{e}^{\mathrm{i}\theta})\right|^{2s}\mathrm{d}\theta\right)^{k}\right].
\end{equation*} 
\end{defn}
These quantities were first considered by Fyodorov and Keating \cite{FyodorovKeating} (first in the case $\beta=2$ of CUE, then for general $\beta>0$ \cite{FyodorovGnutzmannKeating,KeatingWong}) who made various predictions about their asymptotic behaviour as $N \to \infty$ depending on the values of $k, s, \beta$. Such predictions have then been developed to predictions in number theory, for generalised moments of the Riemann zeta function, by Bailey and Keating in \cite{BaileyKeatingZeta}, following the Keating-Snaith philosophy \cite{KeatingSnaith,KeatingSnaithSympOrth}.

$\bullet$ \textbf{The GMC regime.} In a certain parameter range these asymptotics are conceptually well-understood owing to a connection to log-correlated Gaussian fields and the Gaussian multiplicative chaos (GMC), see \cite{BaileyKeatingSurvey,KeatingWong}. It is known that, with convergence in distribution (denoted by $\overset{\mathrm{d}}{\longrightarrow}$) taking place in a negative Sobolev space, see \cite{HughesKeatingOConnell},
\begin{equation}\label{FieldConv}
\mathsf{L}_N(\bullet) \overset{\textnormal{d}}{\longrightarrow}   \beta^{-\frac{1}{2}}\mathsf{G}(\bullet),
\end{equation}
where $\mathsf{G}$ is the Gaussian free field on $\mathbb{T}$ with covariance: 
\begin{equation*}
\mathbb{E}\left[\mathsf{G}(x)\mathsf{G}(y)\right]=-\log\left|\mathrm{e}^{\textnormal{i}x}-\mathrm{e}^{\textnormal{i}y}\right|.
\end{equation*}
From $\mathsf{G}$ one can define, for $\gamma< \sqrt{2}$, a non-trivial random measure $\mathsf{GMC}_\gamma$ on $\mathbb{T}$ given informally by the expression:
\begin{equation*}
\mathsf{GMC}_\gamma(\mathrm{d}\theta)=``\frac{\mathrm{e}^{\gamma \mathsf{G}(\theta)}}{\mathbb{E}\left[e^{\gamma \mathsf{G}(\theta)}\right]}\mathrm{d}\theta=\mathrm{e}^{\gamma \mathsf{G}(\theta)-\frac{\gamma^2}{2}\mathbb{E}\left[\mathsf{G}^2(\theta)\right]}\mathrm{d}\theta",
\end{equation*}
and this is called the GMC on $\mathbb{T}$, see \cite{RhodesVargas,Berestycki}. Hence, from \eqref{FieldConv} it is natural to expect that for $2s^2<\beta$ one should have the following weak convergence of random measures:
\begin{equation}\label{GMCconvergence}
  \frac{\left|\mathsf{X}_N(\mathrm{e}^{\textnormal{i}\theta})\right|^{2s}}{\mathbb{E}_{\textnormal{C}\beta \textnormal{E}_N}\left[\left|\mathsf{X}_N(\mathrm{e}^{\textnormal{i}\theta})\right|^{2s}\right]}  \mathrm{d}\theta\overset{\textnormal{d}}{\longrightarrow} \mathsf{GMC}_{2s\beta^{-\frac{1}{2}}}\left(\mathrm{d}\theta\right).
\end{equation}
This was first proven in the case $\beta=2$ of CUE using Riemann-Hilbert techniques \cite{Webb,NikulaSaksmanWebb} and recently established for general $\beta>0$ \cite{LambertNajnudel} using a probabilistic approach. The critical case $\beta=2s^2$, for which a different renormalisation is required, is of particular interest because of its relation to the following conjecture of Fyodorov-Hiary-Keating \cite{FyodorovKeating,FyodorovHiaryKeating} on the maximum of the field $\mathsf{L}_N$ on $\mathbb{T}$, as $N \to \infty$:
\begin{equation}\label{MaxConjecture}
\max_{\theta \in \mathbb{T}}\mathsf{L}_N(\theta)-\sqrt{\frac{2}{\beta}} \left(\log N-\frac{3}{4}\log \log N\right) \overset{\textnormal{d}}{\longrightarrow} \mathfrak{G}_\beta,
\end{equation}
where $\mathfrak{G}_\beta$ is an explicit random variable, closely related to critical GMC, see \cite{ArguinBeliusBourgade,PaquetteZeitouni1,ChhaibiMadauleNajnudel,PaquetteZeitouni2}  for progress. An analogous conjecture exists for the Riemann $\zeta$-function on the critical line $\Re(z)=\frac{1}{2}$, motivated by the above random matrix prediction, see \cite{FyodorovKeating,FyodorovHiaryKeating,NajnudelZeta,ABBRS,Harper,ABR1,ABR2}.

Back to $\mathsf{M}^{(\beta)}_N(k;s)$, with $2s^2<\beta$ so that \eqref{GMCconvergence} holds, known as the moment-subcritical regime $2ks^2<\beta$, $\mathsf{M}^{(\beta)}_N(k;s)$ grows like a multiplicative constant times $N^{\frac{2}{\beta}ks^2}$, see \cite{Remy,KeatingWong,LambertNajnudel}. In this case, the leading order coefficient is basically given by the moments of the total mass of the GMC, see \cite{Remy,KeatingWong,LambertNajnudel} and the discussion in Remark \ref{FormulaComparison} below. These moments have now been computed rigorously, see \cite{Remy,ChhaibiNajnudel} and \cite{FyodorovBouchaud} for the original physics prediction of Fyodorov-Bouchaud. In the moment-critical regime $\beta=2ks^2$, $\mathsf{M}^{(\beta)}_N(k;s)$ is conjectured by Keating and Wong \cite{KeatingWong} to grow like $N\log N$ times an explicit leading order coefficient again related to GMC, see \cite{KeatingWong}. For $\beta=2$, $k\in \mathbb{N}$, Keating and Wong proved their conjecture in the same paper. 

$\bullet$ \textbf{The supercritical regime.} When $2ks^2>\beta$, the connection to moments of the GMC breaks down and it is not as clear how  and why $\mathsf{M}^{(\beta)}_N(k;s)$ should behave asymptotically. Nevertheless, the following remarkable prediction was made in \cite{FyodorovKeating,FyodorovGnutzmannKeating, KeatingWong}, see Conjecture 2.4 in \cite{BaileyKeatingSurvey} for the special case $\beta=2$.

\begin{conj}\label{prediction} 
Let $\beta>0, k\in \mathbb{N}, s\in \mathbb{R}_+$. For $2ks^2>\beta$, as $N \to \infty$,
\begin{equation}
\mathsf{M}^{(\beta)}_N(k;s) \sim \mathfrak{c}^{(\beta)}(k;s) N^{2k^2s^2\beta^{-1}-k+1}.
\end{equation}
\end{conj}
The conjecture also makes sense for real exponents $k$ and we will establish that as well for a certain parameter range. The leading order coefficient $\mathfrak{c}^{(\beta)}(k;s)$ was unknown at the time Conjecture \ref{prediction} was first made.  Since then there has been significant work on this problem using analytic and combinatorial techniques.  Various partial results have been obtained with different expressions given for $\mathfrak{c}^{(\beta)}(k;s)$, ranging from representations in terms of Painlev\'{e} equations to volumes of certain polytopes, that we now survey. 

Firstly, for $k=1$, $\mathsf{M}^{(\beta)}_N(1;s)$ can be computed completely explicitly using the Selberg integral and the $N \to \infty$ asymptotics can readily be established, see \cite{KeatingSnaith,RandomDets}. For $k\neq 1$, work on Conjecture \ref{prediction} mainly focused on the case $\beta=2$ of CUE for which a number of tools are available:

$\bullet  \boldsymbol{\; \beta=2}$. The asymptotics for $k=2$, $s \in \mathbb{R}_+$, $\beta=2$ were first established by Claeys and Krasovsky in \cite{ClaeysKrasovsky} using Riemann-Hilbert problem techniques who also gave a representation of $\mathfrak{c}^{(2)}(2;s)$ in terms of the Painlev\'{e} V equation. This follows from computing the asymptotics of $N\times N$ Toeplitz determinants with symbols with two merging Fisher-Hartwig singularities, a problem with very long history, see \cite{Widom,ClaeysKrasovsky,ClaeysFahs,Fahs,BourgadeFalconet}. Two alternative proofs of the asymptotics for $k=2$, $s \in \mathbb{N}$, $\beta=2$ were given in \cite{KRRR}. One proof makes use of a multiple contour integral representation coming from \cite{CFKRS} and the other one is algebraic combinatorial making use of symmetric function theory, see \cite{BympGamburd}. The complex analytic and combinatorial proofs were then extended in \cite{BaileyKeatingMoM} and \cite{AssiotisKeatingMoM} respectively to prove the asymptotics for general $k\in \mathbb{N}$, $s \in \mathbb{N}$, $\beta=2$. The combinatorial proof gives an expression for $\mathfrak{c}^{(2)}(k;s)$ in terms of the volume of a Gelfand-Tsetlin type polytope. This expression, for $k=2$, can be used to give an alternative proof of the representation of $\mathfrak{c}^{(2)}(2;s)$ in terms of the Painlev\'{e} V equation \cite{BasorGeRubinstein,BasorChenEhrhardt}. For applications of these methods to allied problems, see \cite{ABK, MoMEhrhart,BaileyKeatingSurvey}. Finally, for $k\in  \mathbb{N}$ and $s\in \mathbb{R}_+$ so that $ks^2>1$, Fahs in \cite{Fahs} using Riemann-Hilbert problem techniques gave the following upper and lower bound:
\begin{equation*}
\mathsf{M}^{(2)}_N(k;s)=\mathrm{e}^{\mathcal{O}_{k,s}(1)} N^{2k^2s^2\beta^{-1}-k+1}.
\end{equation*}
Obtaining some expression for the $\mathrm{e}^{\mathcal{O}_{k,s}(1)}$ term above when $s \in \mathbb{R}_+$, $k>2$,  and further connecting it to integrable systems is a well-known folklore problem in the Riemann-Hilbert problem community. It boils down to a delicate understanding of asymptotics of $N\times N$ Toeplitz determinants with multiple (growing in number with $k$) merging  Fisher-Hartwig singularities that goes beyond Fahs' tour-de-force work \cite{Fahs}. 

$\bullet \boldsymbol{\; \beta \neq 2}$. For general $\beta \neq 2$ very little was known. Partial results, for $k,s \in \mathbb{N}$, were first obtained in \cite{MoMCbetaE}, see also more recently \cite{forrester2025dualities}, using combinatorics. The asymptotics were established for $k=2$, $s \in \mathbb{N}$, $4s^2>\beta$ and for $k\in \mathbb{N}$, $k > 2$, $s\in \mathbb{N}$, $\beta \le 2$. The leading order coefficient was given as an integral over the polytope mentioned above. The integrand can be written in terms of the Dixon-Anderson probability distribution, see \cite{ForresterBook}.

It was until now unclear whether a natural probabilistic object existed which would replace GMC in the supercritical regime. The results above did not give any indication. Our first main result, Theorem \ref{MainThm1} below, establishes the asymptotics of $\mathsf{M}^{(\beta)}_N(k;s)$ in the supercritical regime for general $\beta>0$ and generic real moment exponents $k, s$, in particular proving Conjecture \ref{prediction}, and reveals what this object is. Our approach is different from previous works and is probabilistic in nature.

\subsection{$\mathsf{Sine}_{\beta}$ correlations }

The general $\beta>0$, $\mathsf{Sine}_\beta$ point process, arguably one of the most fundamental objects in random matrix theory, is the universal scaling limit of general $\beta$-ensembles: see \cite{ValkoViragCarousel,KillipStoiciu,BourgadeBulk}. It was first constructed for all $\beta>0$ by Valk\'{o} and Virag  \cite{ValkoViragCarousel} and around the same time by Killip and Stoiciu \cite{KillipStoiciu}. The special case $\mathsf{Sine}_2$ is the well-known determinantal point process \cite{BorodinDet,JohanssonDet,ForresterBook} with correlations given by the famous sine kernel \cite{ForresterBook}. The other classical values $\beta=1,4$ also possess special algebraic structure, with correlations now given in terms of Pfaffians, see \cite{ForresterBook}. 

We now give the precise definition of $\mathsf{Sine}_\beta$, see \cite{ValkoViragCarousel,ValkoViragOperators,KillipStoiciu}, via its counting function. Equivalent characterisations exist, such as being the spectrum of a certain random Dirac operator, see for example \cite{ValkoViragOperators}. Define $(\mathsf{w}_\mathbb{C}(t))_{t\in (-\infty,\infty)}$ to be a two-sided complex Brownian motion. Then, consider the unique strong solution $(\mathsf{p}_x^{(\beta)}(t))_{t\in (-\infty,\infty)}$ to the one-parameter, in $\lambda \in \mathbb{R}$, coupled (as they all share $\mathsf{w}_{\mathbb{C}}$) family of stochastic differential equations (SDE),
\begin{equation}\label{SineSDE}
\mathrm{d}\mathsf{p}_\lambda^{(\beta)}(t)=\lambda \frac{\beta}{4}\mathrm{e}^{\frac{\beta}{4}t}\mathrm{d}t+\Re\left[\left(\mathrm{e}^{-\mathrm{i}\mathsf{p}_\lambda^{(\beta)}(t)}-1\right)\mathrm{d}\mathsf{w}_\mathbb{C}(t)\right],
\end{equation}
with $t \in (-\infty,\infty)$,
and with initial condition $\lim_{t\to -\infty}\mathsf{p}^{(\beta)}_{\lambda}(t)=0$. Finally, let $\mathsf{U}$ be a uniform random variable on $[0,2\pi)$ independent of $\mathsf{w}_\mathbb{C}$. 

\begin{defn} 
For $\beta>0$, the $\mathsf{Sine}_\beta$ point process is characterised as
\begin{equation*}
\mathsf{Sine}_\beta\overset{\mathrm{d}}{=} \left\{\lambda \in \mathbb{R}: \mathsf{p}^{(\beta)}_\lambda(0)=\mathsf{U} \ \mathrm{mod} \ 2\pi\right\}.
\end{equation*}
\end{defn}
Here and throughout the paper $\overset{\mathrm{d}}{=}$ denotes equality in distribution.

Moving on to correlations, for a simple point process $\boldsymbol{\Xi}$ on $\mathbb{R}$ we denote by $\boldsymbol{\Xi}(\mathscr{A})$ the number of points of $\boldsymbol{\Xi}$ in $\mathscr{A}$, where $\mathscr{A}$ is a Borel subset of $\mathbb{R}$. The $m$-th correlation function of $\boldsymbol{\Xi}$, if it exists, is the symmetric function $\boldsymbol{\rho}^{(m)}:\mathbb{R}^m\to \mathbb{R}_+$ that satisfies: for $\mathscr{A}_1,\dots,\mathscr{A}_m$ disjoint Borel sets,
\begin{equation*}
\mathbb{E}\left[\prod_{j=1}^m \boldsymbol{\Xi}(\mathscr{A}_j)\right]=\int_{\mathscr{A}_1}\cdots\int_{\mathscr{A}_m}\boldsymbol{\rho}^{(m)}(x_1,\dots,x_m)\mathrm{d}x_1\cdots\mathrm{d}x_m.
\end{equation*}

The fact that correlation functions of all orders for $\mathsf{Sine}_\beta$ exist follows from moment bounds from \cite{ValkoViragCarousel}, see the discussion in Section 2.1 of \cite{qu2025pair}. In particular, the following definition makes sense.
\begin{defn}
For $\beta>0$ and $m\in \mathbb{N}$ we define $\boldsymbol{\rho}_\beta^{(m)}$ to be the $m$-th correlation function of $\mathsf{Sine}_\beta$.
\end{defn}
The problem of giving an expression for $\boldsymbol{\rho}_\beta^{(m)}$ and obtaining some information out of it is a long-standing one. There has been very significant body of work by Forrester in the case of even integer $\beta=2n$. Forrester using generalised hypergeometric functions and Jack polynomial theory obtains a multiple integral expression for the correlations and also relates them to higher order differential equations, see \cite{ForresterNuclearPhys,ForresterNuclearPhysAdd,ForresterConj,ForresterBook,ForresterRains,ForresterRahman,ForresterDifferential}. Moreover, a certain expression for the correlations for rational values of $\beta$ was obtained in the unpublished manuscript by Okounkov \cite{Okounkov}.

It was only very recently however, that Qu and Valk\'{o} \cite{qu2025pair} obtained the first expression for the pair correlation $\boldsymbol{\rho}_\beta^{(2)}(0,x)$ of $\mathsf{Sine}_\beta$ for general $\beta>0$. This expression is given in terms of expectations of generalisations of the diffusions from \eqref{SineSDE} above. We recall their formula, and compare to the one we obtain, in \eqref{ComparisonQuValko}. Using it they can remarkably compute precise large $x$ asymoptotics of $\boldsymbol{\rho}_\beta^{(2)}(0,x)$. Moreover, for $\beta=2n$ they can get a certain power series representation for $\boldsymbol{\rho}_\beta^{(2)}(0,x)$.

Our second main result Theorem \ref{MainThm2} is an expression for all order correlation functions of $\mathsf{Sine}_\beta$ for all $\beta>0$.  In some sense this gives implicitly a positive answer to Problem 7 of \cite{qu2025pair} which asks for an expression of higher order correlation functions of $\mathsf{Sine}_\beta$ in terms of diffusions like \eqref{SineSDE}. We also expect that the expression we obtain for the correlations will be useful in problems of infinite-dimensional dynamics related to random matrices, see \cite{Osada1,Osada2,Suzuki1,Suzuki2}.

\subsection{Main results}

In order to state our main results we need to introduce certain quantities from \cite{ValkoViragOperators,LiValko}. For $\delta>0$ consider the unique strong solution $(\mathsf{p}_\lambda^{(\beta,\delta)}(t))_{t\in (-\infty,\infty)}$ to the generalisation of the coupled SDE \eqref{SineSDE} (which corresponds to $\delta=0$), indexed by $\lambda\in \mathbb{R}$,
\begin{equation}
\mathrm{d}\mathsf{p}_\lambda^{(\beta,\delta)}(t)=\lambda \frac{\beta}{4}\mathrm{e}^{\frac{\beta}{4}t}\mathrm{d}t+\Re\left[\left(\mathrm{e}^{-\mathrm{i}\mathsf{p}_\lambda^{(\beta,\delta)}(t)}-1\right)\left(\mathrm{d}\mathsf{w}_\mathbb{C}(t)-\mathrm{i}\delta\mathrm{d}t\right)\right],
\end{equation}
 with $t \in (-\infty,\infty)$ and initial condition $\lim_{t \to -\infty} \mathsf{p}_\lambda^{(\beta,\delta)}(t)=0$. Let $\boldsymbol{\Theta}_\delta$ be a random variable on $[0,2\pi)$ with probability density:
\begin{equation*}
\frac{1}{2\pi} \frac{\Gamma(1+\delta)^2}{\Gamma(1+2\delta)} \left|1-\mathrm{e}^{\mathrm{i}\theta}\right|^{2\delta}
\end{equation*}
independent of the two-sided complex Brownian motion $\mathsf{w}_{\mathbb{C}}$. Observe that $\boldsymbol{\Theta}_0\overset{\textnormal{d}}{=}\mathsf{U}$.
\begin{defn}
Define the Hua-Pickrell point process $\mathcal{HP}_{\beta,\delta}$ with parameters $\beta,\delta >0$:
\begin{equation}
\mathcal{HP}_{\beta,\delta}\overset{\mathrm{d}}{=}\left\{\lambda \in \mathbb{R}:\mathsf{p}^{(\beta,\delta)}_\lambda(0)=\boldsymbol{\Theta}_\delta \ \mathrm{mod} \ 2\pi\right\}.
\end{equation}
\end{defn}
Note that, $\mathcal{HP}_{\beta,0}\overset{\textnormal{d}}{=}\mathsf{Sine}_\beta$. $\mathcal{HP}_{\beta,\delta}$ can in fact be considered for $\Re(\delta)>-1/2$ but we will not need this more general parameter range in this paper. For $\beta=2$, it is well-known that $\mathcal{HP}_{2,\delta}$ is determinantal with an explicit correlation kernel generalising the sine kernel, see \cite{BorodinOlshanski,ForresterWitte}. Like $\mathsf{Sine}_\beta$, $\mathcal{HP}_{\beta,\delta}$ has multiple characterisations, including as the spectrum of a more general random Dirac operator, see \cite{ValkoViragOperators,LiValko}.

We now define the Hua-Pickrell stochastic zeta function $\boldsymbol{\xi}^{\beta,\delta}$ of Li and Valk\'{o} \cite{LiValko}. We note that Li-Valk\'{o} gave a different equivalent representation of it as a power series, see \cite{LiValko}, and to get the principal value product in Definition \ref{StochasticZeta} additional arguments are required, see Proposition \ref{PropPVRep}. The special case, with $\delta=0$, corresponding to $\mathsf{Sine}_\beta$ was studied earlier by Valk\'{o} and Virag \cite{ValkoVirag} while the determinantal case of $\mathsf{Sine}_2$ was introduced and studied by Chhaibi-Najnudel-Nikeghbali in \cite{CNN}.

\begin{defn}\label{StochasticZeta} Let $\beta>0, \delta>0$. Define the following random entire function
\begin{equation}\label{PVrep}
\boldsymbol{\xi}^{\beta,\delta}(z)=\lim_{R\to \infty} \prod_{\substack{\mathsf{x} \in \mathcal{HP}_{\beta,\delta} \\ |\mathsf{x}|<R}} \left(1-\frac{z}{\mathsf{x}}\right).
\end{equation}
\end{defn}
The fact that almost surely the principal value product in \eqref{PVrep} converges uniformly on compact sets in $z \in \mathbb{C}$ will be a consequence of the proof of Proposition \ref{PropPVRep}.

We will also need the special function
\begin{align*}
\mathcal{Y}_\beta(z)&=\frac{\beta}{2}\log G\left(1+\frac{2z}{\beta}\right)-\left(z-\frac{1}{2}\right)\log \Gamma\left(1+\frac{2z}{\beta}\right)+\int_0^\infty \left(\frac{1}{2x}-\frac{1}{x^2}+\frac{1}{x(\mathrm{e}^x-1)}\right)\frac{\mathrm{e}^{-xz}-1}{\mathrm{e}^{x\beta/2}-1}\mathrm{d}x\\ &+ \frac{z^2}{\beta}+\frac{z}{2},
\end{align*}
where $G$ is the Barnes G-function. For special values of $\beta$, including $\beta \in 2 \mathbb{N}$, $\mathcal{Y}_\beta(z)$ has a much simpler expression, see Lemma 7.1 of \cite{RandomDets}. Write, for $a\in \mathbb{R}$, $\mathbb{R}_{\ge a}=[a,\infty)$.

The following is the first main result of the paper.

\begin{thm}\label{MainThm1}
 Let $k\in \mathbb{N}, s\in \mathbb{R}_+$, $\beta >0$ such that $2ks^2>\beta$. Then, as $N \to \infty$,
 \begin{equation}\label{MoMLimit}
\frac{1}{N^{2k^2s^2\beta^{-1}-k+1}}\mathsf{M}^{(\beta)}_N(k;s)\longrightarrow \mathcal{F}^{(\beta)}_{k;s}\mathbb{E}\left[\left(\int_{-\infty}^\infty \left|\boldsymbol{\xi}^{\beta,ks}(x)\right|^{2s}\mathrm{d}x\right)^{k-1}\right],
 \end{equation}
where $\mathcal{F}^{(\beta)}_{k;s}$ is given explicitly 
 \begin{equation}
    \mathcal{F}^{(\beta)}_{k;s}=(2\pi)^{1-k}\mathrm{e}^{\mathcal{Y}_\beta\left(1+2ks-\beta/2\right)-2\mathcal{Y}_\beta \left(1+ks-\beta/2\right)+\mathcal{Y}_\beta(1-\beta/2)}.
 \end{equation}
 The same result holds for $k \in \mathbb{R}_{\ge 1}$ whenever both $2s^2(2k-\lceil k \rceil)>\beta$ and, for $k>1$, $4\lceil k-1\rceil(k-\lceil k-1\rceil)s^2>\beta$.
\end{thm}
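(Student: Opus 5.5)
Write $\mathsf{M}^{(\beta)}_N(k;s)$ for $k\in\mathbb{N}$ by expanding $\mathsf{M}^{(\beta)}_N(k;s)=\mathbb{E}\big[(\frac{1}{2\pi}\int|\mathsf{X}_N|^{2s})(\frac{1}{2\pi}\int|\mathsf{X}_N|^{2s})^{k-1}\big]$. By rotation invariance of C$\beta$E, the first integral can be replaced by its value at $\theta=0$. Hence
\begin{equation*}
\mathsf{M}^{(\beta)}_N(k;s)=\mathbb{E}_{\textnormal{C}\beta\textnormal{E}_N}\Big[|\mathsf{X}_N(1)|^{2s}\Big(\frac{1}{2\pi}\int_{-\pi}^{\pi}|\mathsf{X}_N(\mathrm{e}^{\mathrm{i}\theta})|^{2s}\mathrm{d}\theta\Big)^{k-1}\Big].
\end{equation*}
Next, bias by $|\mathsf{X}_N(1)|^{2ks}$ in order to absorb the factor $|\mathsf{X}_N(1)|^{2s}$ and the effect of the other $k-1$ integrals near $\theta=0$. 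Concretely, write $|\mathsf{X}_N(1)|^{2s}=|\mathsf{X}_N(1)|^{2ks}\cdot|\mathsf{X}_N(1)|^{-2(k-1)s}$ and pair each remaining integral with one factor $|\mathsf{X}_N(1)|^{-2s}$. This gives
\begin{equation*}
\mathsf{M}^{(\beta)}_N(k;s)=\mathbb{E}\big[|\mathsf{X}_N(1)|^{2ks}\big]\,\widetilde{\mathbb{E}}_{N}\Big[\Big(\frac{1}{2\pi}\int_{-\pi}^{\pi}\Big|\frac{\mathsf{X}_N(\mathrm{e}^{\mathrm{i}\theta})}{\mathsf{X}_N(1)}\Big|^{2s}\mathrm{d}\theta\Big)^{k-1}\Big],
\end{equation*}
where $\widetilde{\mathbb{E}}_N$ is the circular Jacobi (Hua–Pickrell) ensemble with weight $|1-\mathrm{e}^{\mathrm{i}\theta}|^{2ks}$, i.e. parameter $\delta=ks$. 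The prefactor is a Selberg-type integral computed exactly, and its asymptotics $\sim C N^{2k^2s^2/\beta}$ yield the $\mathcal{Y}_\beta$ terms in $\mathcal{F}^{(\beta)}_{k;s}$, via Lemma 7.1 type asymptotics from \cite{RandomDets}.

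For the biased expectation, substitute $\theta=x/N$, so that $\frac{1}{2\pi}\int \mathrm{d}\theta=\frac{1}{2\pi N}\int \mathrm{d}x$; this produces the factor $(2\pi N)^{1-k}$. The total power of $N$ is then $2k^2s^2\beta^{-1}-k+1$, as claimed. It remains to show
\begin{equation*}
\widetilde{\mathbb{E}}_N\Big[\Big(\int_{-\pi N}^{\pi N}\Big|\frac{\mathsf{X}_N(\mathrm{e}^{\mathrm{i}x/N})}{\mathsf{X}_N(1)}\Big|^{2s}\mathrm{d}x\Big)^{k-1}\Big]\to \mathbb{E}\Big[\Big(\int_{\mathbb{R}}|\boldsymbol{\xi}^{\beta,ks}(x)|^{2s}\mathrm{d}x\Big)^{k-1}\Big].
\end{equation*}
To get this, use the Verblunsky coefficient (Killip–Nenciu type) model of the circular Jacobi ensemble. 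In that model the Verblunsky coefficients are independent with explicit laws, and the normalised characteristic polynomial $\mathsf{X}_N(\mathrm{e}^{\mathrm{i}x/N})/\mathsf{X}_N(1)$ is expressed through the Szegő recursion. Following \cite{LiValko}, show that this converges almost surely, under a coupling and uniformly on compacts in $x$, to $\boldsymbol{\xi}^{\beta,ks}(x)$. Identify the limit with the principal value product of Definition \ref{StochasticZeta} via Proposition \ref{PropPVRep}, with suitable phase normalisation since only modulus matters.

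The main obstacle is upgrading locally uniform convergence to convergence of the $(k-1)$-th moment of the integral over a growing window. This requires uniform integrability, i.e. bounds uniform in $N$ on $\widetilde{\mathbb{E}}_N[(\int_{|x|>A}|\cdot|^{2s})^{k-1}]$ that vanish as $A\to\infty$. I would prove a decay estimate of the form $\widetilde{\mathbb{E}}_N[\prod_{j}|\mathsf{X}_N(\mathrm{e}^{\mathrm{i}x_j/N})/\mathsf{X}_N(1)|^{2s}]\lesssim\prod_j (1+|x_j|)^{-\gamma_j}$, using Fisher–Hartwig/Selberg-type bounds for products of powers of $|\mathsf{X}_N|$ at several points. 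Alternatively, I would get it from martingale estimates along the OPUC recursion, which is the probabilistic route. The supercriticality condition $2ks^2>\beta$ is exactly what makes the single-point decay exponent $4ks^2/\beta-2s^2/\beta\cdot(\ldots)$ exceed $1$, so $|\boldsymbol{\xi}|^{2s}$ is integrable at infinity. When the points $x_j$ cluster, one needs H\"older splittings across subsets of the $k-1$ points. This is where the extra conditions for non-integer $k$ enter: write the power $k-1$ as $\lceil k-1\rceil$ copies with H\"older exponent $(k-1)/\lceil k-1\rceil$. For real $k$, also replace the rotation-invariance step by $\mathbb{E}[(\int)^k]=\mathbb{E}[|\mathsf{X}_N(1)|^{2s}(\int)^{k-1}]$, which still holds by rotation invariance. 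The remaining argument is unchanged, with the H\"older-based bounds checked under the two stated conditions.
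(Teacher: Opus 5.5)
Your reduction is the same as the paper's. It uses rotation invariance and the change of measure to $\mathcal{CJ}_{N,\beta,ks}$ (Proposition \ref{MoMrep}), the one-point asymptotics of $\mathbb{E}[|\mathsf{X}_N(1)|^{2ks}]$, the rescaling $\theta=x/N$ producing $(2\pi N)^{1-k}$, the Li--Valk\'o coupling with the principal-value identification, and, for non-integer $k$, uniform integrability from the moment of order $\lceil k-1\rceil$. The gap is the step you defer, and that step is where the whole difficulty lies: the uniform-in-$N$ joint moment estimate. The estimate you aim for, $\prod_j(1+|x_j|)^{-\gamma_j}$ combined with H\"older splittings, cannot reach the stated range. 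With $\delta=ks$ and $m=k-1$ factors $|\mathfrak{q}_N^{\beta,ks}|^{2s}$ one has $2\delta-2ms=2s$. H\"older gives $\prod_j\mathbb{E}[|\mathfrak{q}_N^{\beta,ks}(\mathrm{e}^{\mathrm{i}x_j/N})|^{2ms}]^{1/m}\lesssim\prod_j(1+|x_j|)^{-4s^2/\beta}$, whose integral over $\mathbb{R}^{k-1}$ is finite only if $4s^2>\beta$. No product-form bound does better. Near the diagonal $x_1=\dots=x_{k-1}=X$ the moment is essentially the one-point moment of $|\mathfrak{q}_N^{\beta,ks}|^{2(k-1)s}$, which decays only like $X^{-4(k-1)s^2/\beta}$ (two-sided at $\beta=2$ by \cite{Fahs}). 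That forces $\sum_j\gamma_j\le 4(k-1)s^2/\beta$, while integrability needs every $\gamma_j>1$. So for $k\ge 3$ you lose the whole window $\beta/(2k)<s^2\le\beta/4$. Relatedly, $2ks^2>\beta$ is not a one-point condition: the one-point exponent $4(k-1)s^2/\beta$ already exceeds $1$ under $4(k-1)s^2>\beta$, which is strictly weaker for $k\ge3$.

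The missing ingredient is decorrelation between distant points, namely Theorem \ref{MainBound} with its pairwise factors $\prod_{i<j}(1+|x_i-x_j|^{r_ir_j/\beta})^{-1}$. Take $a=b=4s^2/\beta$ and let all $k-1$ points spread out at a common large scale $R$. The bound is then $R^{-(k-1)a-\binom{k-1}{2}b}$ against a volume $R^{k-1}$, and $(k-1)a+\binom{k-1}{2}b>k-1$ is exactly $2ks^2>\beta$. This is the binding constraint in Lemma \ref{IntegralFinite}. Likewise, the two conditions for non-integer $k$ are precisely the hypotheses of Proposition \ref{PropIntegerMoments} with $\delta=ks$, $m=\lceil k-1\rceil$, so they cannot be verified from product-form bounds either. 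Your first suggested source for such bounds is also unavailable. Even the one-point decay is a two-singularity estimate: it is $\mathbb{E}_{\textnormal{C}\beta\textnormal{E}_N}[|\mathsf{X}_N(1)|^{2\delta-r}|\mathsf{X}_N(\mathrm{e}^{\mathrm{i}x/N})|^{r}]$ divided by a Selberg integral, and Fisher--Hartwig technology for it exists only at $\beta=2$. The paper proves the bound in the Killip--Nenciu Verblunsky model in three steps. First, a one-step conditional estimate for $\prod_m|\Phi^*_{j+\ell}(\mathrm{e}^{\mathrm{i}\theta_m})|^{2s_m}$ given $\mathcal{F}_j$, using concentration of the phases $\Psi_j$. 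Second, iteration over dyadic blocks beyond the separation scale $\mu^{-1}$, so the cross terms contribute $O(1)$ in total and the growth is $(N/j)^{\frac{2}{\beta}\sum_m s_m^2}$. Third, induction on the number of points, merging the closest pair below that scale. Your mention of martingale estimates along the OPUC recursion points the right way. Without the pairwise form of the target bound and this multiscale argument, though, the proposal at best covers $4s^2>\beta$, and even that only after a two-point estimate you have not supplied.
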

The result is optimal for $k \in \mathbb{N}$ and completely proves Conjecture \ref{prediction}. The restriction for $k \in \mathbb{R}_{\ge 1}\backslash\mathbb{N}$ is technical and due to the fact that in the proof we compare to the next larger integer moment which we can control explicitly. This also explains the fact that both restrictions are worse when $k$ is just above an integer while they get closer to optimal as $k$ approaches an integer from below. We note that before our work Conjecture \ref{prediction} had not been proven for any non-integer value of the exponent $k$, even when $\beta=2$.

By comparing to previous results in the literature \cite{ClaeysKrasovsky,KRRR, AssiotisKeatingMoM, BaileyKeatingMoM,BaileyKeatingSurvey,Fahs,MoMCbetaE} one readily obtains expressions for the integer moments of integrals of $ |\boldsymbol{\xi}^{2,ks}|^{2s}$ in terms of Painlev\'{e} equations and volumes of polytopes. It would be interesting and non-trivial to obtain them directly from $\boldsymbol{\xi}^{\beta,ks}$. Going further, it would be very interesting to use expression \eqref{MoMLimit} along with the various descriptions of $\boldsymbol{\xi}^{\beta,\delta}$ to obtain new explicit information about the limit. We mention in passing that joint moments of the Taylor coefficients of $\boldsymbol{\xi}^{2,\delta}$, for suitable $\delta$, also describe the limiting joint moments of characteristic polynomials of random unitary matrices along with their derivatives of any order, see \cite{AKW,ABGS,AGKW}.

Finally, we expect that an analogous strategy to the one we employ here for Theorem \ref{MainThm1}, along with additional technical innovations, could be used to tackle the problem of asymptotics of supercritical moments of moments for the orthogonal and unitary symplectic groups and for the Gaussian unitary ensemble (and its $\beta$-ensemble version) for which much less is known, see \cite{FyodorovSimm,ClaeysGlesnerMinakovYang,ClaeysForkelKeating}.

We now move to our next main result on the correlation functions $\boldsymbol{\rho}^{(m)}_\beta$ of $\mathsf{Sine}_\beta$.

\begin{thm}\label{MainThm2} For all $\beta>0$ and $m\in \mathbb{N}$, we have:
\begin{equation}\label{CorrFormula}
\boldsymbol{\rho}^{(m)}_\beta(x_1,\dots,x_m)=\mathfrak{C}_{\beta}^{(m)} \prod_{1\le i<j \le m}\left|x_i-x_j\right|^\beta \mathbb{E}\left[\prod_{j=2}^m \left|\boldsymbol{\xi}^{\beta,m\beta/2}\left(x_j-x_1\right)\right|^\beta\right],
\end{equation}
where the constant $\mathfrak{C}_{\beta}^{(m)}$ is explicitly given by:
\begin{equation}
\mathfrak{C}_{\beta}^{(m)}=\frac{2^{m\left(\frac{\beta}{2}-1\right)}}{\pi^m \beta^{\beta m/2}}\mathrm{e}^{\mathcal{Y}_\beta\left(1+\beta(m-\frac{1}{2})\right)-2\mathcal{Y}_\beta\left(1+\frac{\beta}{2}(m-1)\right)+\mathcal{Y}_\beta\left(1-\frac{\beta}{2}\right)}.
\end{equation}
The correlation functions $\boldsymbol{\rho}^{(m)}_\beta$ are uniformly bounded on $\mathbb{R}^m$. Moreover, the function 
\begin{equation}\label{CorrFnFn}
(x_1,\dots,x_m)\mapsto \mathbb{E}\left[\prod_{j=2}^m \left|\boldsymbol{\xi}^{\beta,m\beta/2}\left(x_j-x_1\right)\right|^\beta\right]
\end{equation}
is continuous and strictly positive for all $(x_1,\dots,x_m)\in \mathbb{R}^m$.
\end{thm}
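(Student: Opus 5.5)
We plan to start from the exact finite-$N$ formula for the $m$-point correlation function of C$\beta$E$_N$ and pass to the limit under the bulk scaling. By rotation invariance, the $m$-th correlation function of C$\beta$E$_N$ at angles $\theta_1,\dots,\theta_m$ equals
\begin{equation*}
\frac{N!}{(N-m)!}\frac{\mathcal{Z}_{N-m,\beta}}{\mathcal{Z}_{N,\beta}}\prod_{i<j}\left|\mathrm{e}^{\mathrm{i}\theta_i}-\mathrm{e}^{\mathrm{i}\theta_j}\right|^\beta\mathbb{E}_{\textnormal{C}\beta\textnormal{E}_{N-m}}\left[\prod_{j=1}^m\left|\mathsf{X}_{N-m}(\mathrm{e}^{\mathrm{i}\theta_j})\right|^\beta\right].
\end{equation*}
Next we rotate so that $\theta_1=0$. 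The factor $|\mathsf{X}_{N-m}(1)|^\beta$ then tilts C$\beta$E$_{N-m}$ into a circular Jacobi (Hua-Pickrell) ensemble with parameter $\delta=m\beta/2$ relative to the point $1$, i.e.\ weight $|1-\mathrm{e}^{\mathrm{i}\theta}|^{2\delta}$. The normalising constant of this tilt, $\mathbb{E}_{\textnormal{C}\beta\textnormal{E}_{N-m}}[|\mathsf{X}_{N-m}(1)|^{\beta}]$, is explicit by the Selberg integral. Under the tilted measure the remaining expectation is $\mathbb{E}^{\mathrm{cJ}}[\prod_{j=2}^m|\mathsf{X}_{N-m}(\mathrm{e}^{\mathrm{i}\theta_j})|^\beta]$. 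We then set $\theta_j=2\pi (x_j-x_1)/N$, after recentring the $\mathsf{Sine}_\beta$ scaling to match the paper's normalisation; the factors $2^{m(\beta/2-1)}\pi^{-m}\beta^{-\beta m/2}$ suggest a rescaling by $\beta/2$. The Vandermonde factor contributes $N^{-\beta m(m-1)/2}$ times $\prod|x_i-x_j|^\beta$ up to constants.

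The convergence input, used also for Theorem \ref{MainThm1}, is that the circular Jacobi characteristic polynomial, suitably normalised by its value at $1$, converges to the stochastic zeta function:
\begin{equation*}
\frac{\mathsf{X}_{N-m}(\mathrm{e}^{\mathrm{i}2\pi z/N})}{\mathsf{X}_{N-m}(1)}\longrightarrow \mathrm{e}^{\mathrm{i}\pi z}\boldsymbol{\xi}^{\beta,\delta}(z)\quad\text{(up to scaling)}
\end{equation*}
uniformly on compacts. This would come from the Verblunsky/OPUC representation (Killip-Nenciu) together with Li-Valk\'{o}, via Proposition \ref{PropPVRep}. Since $|\mathsf{X}(1)|^\beta$ has already been absorbed into the measure, we also need the size of $|\mathsf{X}_{N-m}(1)|$ in the remaining product. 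The cleaner route is to instead tilt by the whole factor and write $\prod_{j\ge2}|\mathsf{X}(\mathrm{e}^{\mathrm{i}\theta_j})|^\beta=|\mathsf{X}(1)|^{\beta(m-1)}\prod_{j\ge 2}|\mathsf{X}(\mathrm{e}^{\mathrm{i}\theta_j})/\mathsf{X}(1)|^\beta$. So we tilt by $|\mathsf{X}(1)|^{m\beta}$, which is exactly $\delta=m\beta/2$, and the ratios converge to $|\boldsymbol{\xi}^{\beta,m\beta/2}(x_j-x_1)|^\beta$. All the $N$-powers and Selberg/Gamma ratios, namely $\frac{N!}{(N-m)!}\mathcal{Z}_{N-m}/\mathcal{Z}_N$, $\mathbb{E}|\mathsf{X}_{N-m}(1)|^{m\beta}$ and the Jacobian $(2\pi/N)^m$, should then combine, using the Barnes-$G$ asymptotics encoded in $\mathcal{Y}_\beta$, into the stated constant $\mathfrak{C}^{(m)}_\beta$ times $N^0$. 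This is a consistency check on the exponents $1+\beta(m-\frac12)$, $1+\frac\beta2(m-1)$ and $1-\frac\beta2$.

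The main obstacle is upgrading the convergence in distribution to convergence of the expectation, i.e.\ proving uniform integrability of $\prod_{j\ge2}|\mathsf{X}(\mathrm{e}^{\mathrm{i}\theta_j})/\mathsf{X}(1)|^\beta$ under the circular Jacobi measure, uniformly in $N$ and locally uniformly in $x$. We would attempt this via H\"older's inequality, reducing to bounds on $\mathbb{E}^{\mathrm{cJ}}|\mathsf{X}(\mathrm{e}^{\mathrm{i}\theta})/\mathsf{X}(1)|^{p\beta}$ for some $p>1$ uniform over $|\theta|\le C/N$. These moments are again ratios of explicit Selberg-type quantities, or Toeplitz/Fisher-Hartwig-type expectations, and can be controlled using the OPUC Pr\"ufer-phase representation with a Gronwall-type estimate on the log-modulus martingale. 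Uniform boundedness on all of $\mathbb{R}^m$ would similarly need a bound uniform in $x$, which we would get from a sub-Gaussian estimate of $\log|\boldsymbol{\xi}|$ growth balanced against the decay provided by the $\delta$-tilt. Alternatively, we could use the a priori finite-$N$ bound that the C$\beta$E $m$-point function is at most $C N^m$ uniformly, proven via the same H\"older argument applied to the original product over all $j$.

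Continuity of \eqref{CorrFnFn} then follows from a.s.\ continuity of $\boldsymbol{\xi}$ plus the same uniform integrability, giving dominated convergence. Strict positivity holds because $\boldsymbol{\xi}^{\beta,\delta}(x)\neq0$ almost surely for each fixed $x$, since the point process has no atom at fixed locations. Finally, the identification of the limit of the C$\beta$E correlations with $\boldsymbol{\rho}^{(m)}_\beta$ uses convergence of C$\beta$E to $\mathsf{Sine}_\beta$ together with uniform moment bounds, which gives convergence of factorial moments on disjoint sets.
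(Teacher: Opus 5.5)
Your architecture is the paper's, and the fixed-$x$ convergence, continuity, positivity and identification via point-process convergence go through as you describe: exact C$\beta$E$_{N-m}$ formula, tilt by $|\mathsf{X}_{N-m}(1)|^{m\beta}$ to $\mathcal{CJ}_{N-m,\beta,m\beta/2}$, Li--Valk\'o plus the principal-value identification, uniform integrability, point-process convergence, and no fixed atoms for positivity.

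\textbf{The gap: uniform boundedness of $\boldsymbol{\rho}^{(m)}_\beta$ on all of $\mathbb{R}^m$.} Neither of your two routes proves it.

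Your a priori route is false as stated. H\"older gives
\[
\mathbb{E}_{\textnormal{C}\beta\textnormal{E}_{N-m}}\Big[\prod_{j=1}^m|\mathsf{X}_{N-m}(\mathrm{e}^{\mathrm{i}\theta_j})|^\beta\Big]\le\mathbb{E}|\mathsf{X}_{N-m}(1)|^{m\beta}\asymp N^{m^2\beta/2}.
\]
With the prefactor $\asymp N^{m-m\beta/2}$ this yields only $\boldsymbol{\rho}^{(m)}_{N,\beta}\lesssim N^m\prod_{i<j}(N|\mathrm{e}^{\mathrm{i}\theta_i}-\mathrm{e}^{\mathrm{i}\theta_j}|)^\beta$. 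For macroscopically separated points this overshoots the truth by $N^{\beta\binom{m}{2}}$.

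In the limit, H\"older plus one-point decay gives at best $\mathbb{E}[\prod_{j\ge2}|\boldsymbol{\xi}^{\beta,m\beta/2}(y_j)|^\beta]\lesssim\prod_{j\ge2}|y_j|^{-\beta}$. That does not cancel $\prod_{2\le i<j}|y_i-y_j|^\beta$: for $m=3$, $y_2=L$, $y_3=-L$, you are left with $L^\beta$.

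The sub-Gaussian idea for $\log|\boldsymbol{\xi}|$ is also a one-point statement. Even for $m=2$ it would have to give exactly $\mathbb{E}|\boldsymbol{\xi}^{\beta,\beta}(x)|^\beta\lesssim|x|^{-\beta}$, with no $\epsilon$ or logarithmic loss in the exponent. You give no mechanism for that.

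What is missing is a decorrelation estimate for joint moments, uniform in $N$ and in the angles:
\[
\mathbb{E}_{\textnormal{C}\beta\textnormal{E}_N}\Big[\prod_{j}|\mathsf{X}_N(\mathrm{e}^{\mathrm{i}\theta_j})|^{2s_j}\Big]\le C N^{\frac{2}{\beta}\sum_j s_j^2}\prod_{j<k}\min\big(N,|\mathrm{e}^{\mathrm{i}\theta_j}-\mathrm{e}^{\mathrm{i}\theta_k}|^{-1}\big)^{\frac{4}{\beta}s_js_k}.
\]
After the tilt this is Theorem \ref{MainBound} and Corollary \ref{BoundLimitingEntire}. Taking $r_j=\beta$, $\delta=m\beta/2$ in the latter gives exactly the factors $\prod_{j\ge2}(1+|x_j-x_1|^\beta)^{-1}\prod_{2\le i<j}(1+|x_i-x_j|^\beta)^{-1}$ that cancel the Vandermonde.

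This estimate is the paper's main technical work, done in three steps:
\begin{enumerate}
\item In the OPUC representation, the one-step conditional moment is $1+\frac{1}{1+\frac{\beta}{2}(j+1)}\big|\sum_m s_m\mathrm{e}^{\mathrm{i}\Psi_j(\theta_m)}\big|^2+O(j^{-2})$.
\item Concentration of $\Psi_{j+\ell}-\Psi_j-\ell\theta$ makes the relative phases rotate essentially deterministically. The cross terms then sum to $O(\min(\ell,\|\theta_{m_1}-\theta_{m_2}\|^{-1}))$, which is negligible once $j\gtrsim\|\theta_{m_1}-\theta_{m_2}\|^{-1}$; this feeds a scale-doubling iteration.
\item One then inducts on the number of points by merging the closest pair below its separation scale.
\end{enumerate}
A Gronwall bound on a single log-modulus martingale does not see this cancellation.

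\textbf{Two smaller points.}
\begin{itemize}
\item Two-point tilted moments are not Selberg integrals. The local uniform integrability you need does follow from the same one-step expansion by using $|\sum_m s_m\mathrm{e}^{\mathrm{i}\Psi_j(\theta_m)}|\le\sum_m s_m$. This gives $\mathbb{E}[\prod_m|\mathsf{X}_N(\mathrm{e}^{\mathrm{i}\theta_m})|^{2s_m}]\le CN^{\frac{2}{\beta}(\sum_m s_m)^2}$, hence bounded tilted moments whenever $\sum r_j\le2\delta$; H\"older is then unnecessary.
\item The $\mathsf{Sine}_\beta$ defined here has intensity $1/(2\pi)$, so the correct scaling is $\theta_j=x_j/N$. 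The factor $2^{m\beta/2}\beta^{-\beta m/2}$ comes from $\Gamma(\frac{\beta}{2}(N-m)+1)/\Gamma(\frac{\beta}{2}N+1)\sim(\beta N/2)^{-\beta m/2}$, not from a spatial rescaling. With $2\pi x/N$ you would get the wrong constant.
\end{itemize}
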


A number of comments are in order. Firstly, using \eqref{CorrFormula} it should be possible to prove  continuity of $\beta\mapsto \boldsymbol{\rho}_\beta^{(m)}(x_1,\dots,x_m)$. However, this requires additional technical efforts and we will not attempt to do this here. Moreover, it is easy to see from \eqref{CorrFormula} that the even integer $\beta=2n$ case is special. One can expand the $\beta=2n$ powers of the entire function $z\mapsto\boldsymbol{\xi}^{\beta,m \beta/2}(z)$ and (formally) exchange the infinite series and expectation. The computation then boils down to joint moments of the Taylor coefficients of $\boldsymbol{\xi}^{\beta,m \beta/2}$. These can in principle be computed using the Brownian motion representation from \cite{LiValko} or, at least for $\beta=2$, using exchangeability theory formulas or connections to Painlev\'{e} equations as developed in \cite{AGKW}. Performing this computation systematically is an interesting but formidable task. 

We also note that certain properties of the correlation functions, namely that they are uniformly bounded and behave like $\prod_{i<j}|x_i-x_j|^\beta$, up to a well-behaved (continuous and non-vanishing) multiplicative factor $\mathfrak{G}_{\beta;m}$ defined by \eqref{CorrFnFn}, when variables are close, have significant consequences in studying infinite-dimensional log-interacting diffusions using Dirichlet form theory, see \cite{Osada1,Osada2}. The exact form of $\mathfrak{G}_{\beta;m}$, which for generic $\beta$, is unlikely to have a completely explicit evaluation is not important for such considerations.

Finally, by comparing with the formula of Qu and Valk\'{o} from \cite{qu2025pair} for the pair correlation, which is the only other known expression for general $\beta>0$ (only for $m=2$), we obtain the following intriguing equality, for $x>0$,
\begin{equation}\label{ComparisonQuValko}
\mathbb{E}\left[\left|\boldsymbol{\xi}^{\beta,\beta}(x)\right|^{\beta}\right]=\frac{1}{\mathfrak{C}_\beta^{(2)}x^\beta}\left(\frac{1}{4\pi^2}+\frac{1}{2\pi^2}\sum_{k=1}^\infty \frac{\prod_{j=0}^{k-1}\left(-\frac{\beta}{2}+j\right)}{\prod_{j=0}^{k-1}\left(1+\frac{\beta}{2}+j\right)}\mathbb{E}\left[\cos \left(k\mathsf{p}_x^{(\beta,\beta/2)}(0)\right)\right]\right).
\end{equation}
Observe that, the left hand side of \eqref{ComparisonQuValko} involves  the $\mathcal{HP}_{\beta,\beta}$ point process while the right hand side involves the family (in the variable $x\in\mathbb{R}$) of diffusions $(\mathsf{p}_x^{(\beta,\beta/2)}(t))_{t\in (-\infty,\infty)}$ and thus $\mathcal{HP}_{\beta,\beta/2}$. It would be interesting to relate the two expressions directly.

Formula \eqref{CorrFormula} is very-well adapted to looking at the asymptotics when the variables $(x_1,\dots,x_m)$ merge. For example, the following corollary, which is immediate from Theorem \ref{MainThm2}, provides an answer to leading order for Problem 5 of \cite{qu2025pair} (in fact we consider the generalisation to all $m$-point correlation functions). It would be interesting to understand the lower order terms in $|x_i-x_j|$, at least for $m=2$.
\begin{cor}
For $\beta>0$, $m\in \mathbb{N}$, $x_*\in \mathbb{R}$
\begin{equation}
\lim_{(x_1,\dots,x_m)\to (x_*,\dots,x_*)}\frac{\boldsymbol{\rho}^{(m)}_\beta(x_1,\dots,x_m)}{\prod_{1 \le i <j \le m} \left|x_i-x_j \right|^\beta}=\mathfrak{C}_\beta^{(m)}.
\end{equation}
\end{cor}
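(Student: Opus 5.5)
The corollary follows by dividing the formula \eqref{CorrFormula} of Theorem \ref{MainThm2} by $\prod_{i<j}|x_i-x_j|^\beta$. This is legitimate whenever the $x_i$ are pairwise distinct, which is the only case in which the ratio is defined. We get
\begin{equation*}
\frac{\boldsymbol{\rho}^{(m)}_\beta(x_1,\dots,x_m)}{\prod_{1\le i<j\le m}|x_i-x_j|^\beta}=\mathfrak{C}_\beta^{(m)}\,\mathfrak{G}_{\beta;m}(x_1,\dots,x_m),
\end{equation*}
where $\mathfrak{G}_{\beta;m}$ denotes the function in \eqref{CorrFnFn}. It therefore suffices to show that $\mathfrak{G}_{\beta;m}(x_1,\dots,x_m)\to 1$ as $(x_1,\dots,x_m)\to(x_*,\dots,x_*)$.

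For this I use the continuity statement of Theorem \ref{MainThm2}: $\mathfrak{G}_{\beta;m}$ is continuous on all of $\mathbb{R}^m$, including the diagonal. Hence the limit equals the value at the diagonal point $(x_*,\dots,x_*)$. There every argument $x_j-x_1$ vanishes, so the value is $\mathbb{E}\big[\prod_{j=2}^m|\boldsymbol{\xi}^{\beta,m\beta/2}(0)|^\beta\big]$.

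From Definition \ref{StochasticZeta}, $\boldsymbol{\xi}^{\beta,\delta}(0)=1$ almost surely. Indeed, every finite product in \eqref{PVrep} equals $1$ at $z=0$, since $0$ is almost surely not a point of $\mathcal{HP}_{\beta,\delta}$: the process has no atoms at a fixed location, because it has a density of correlation functions. The principal value limit, which converges uniformly on compacts by Proposition \ref{PropPVRep}, therefore also equals $1$ at $z=0$. So the expectation at the diagonal is $1$, and the limit of the ratio is $\mathfrak{C}_\beta^{(m)}$.

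There is no real obstacle here. All of the analytic work sits in Theorem \ref{MainThm2}, namely the continuity of $\mathfrak{G}_{\beta;m}$ up to coinciding points, and I take that theorem as given.
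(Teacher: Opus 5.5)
Your proposal is correct and follows the paper's reasoning. The paper calls the corollary immediate from Theorem \ref{MainThm2}, and that amounts to what you do: divide \eqref{CorrFormula} by the Vandermonde factor, use continuity of \eqref{CorrFnFn} up to the diagonal, and note that $\boldsymbol{\xi}^{\beta,m\beta/2}(0)=1$ almost surely. Your no-atom justification of this last point is fine; it also follows directly from $\mathfrak{q}_N^{\beta,\delta}(1)=1$ and Proposition \ref{LiValkoProp}, as used in the proof of Proposition \ref{PropPVRep}.
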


Finally, it is interesting to note that the answers to the two problems addressed in Theorem \ref{MainThm1} and Theorem \ref{MainThm2} are in fact directly connected in one special case. Namely, by comparing the two formulae for $k=2,s=\beta/2$ and $m=2$ respectively, when $\beta>1$ we have
\begin{equation*}
\lim_{N\to \infty}N^{1-2\beta}\mathsf{M}_N^{(\beta)}(2;\beta/2)=\frac{\pi \beta^{\beta}}{2^{\beta-1}}\int_{-\infty}^\infty \frac{1}{|x|^\beta}\boldsymbol{\rho}_\beta^{(2)}(0,x)\mathrm{d}x.
\end{equation*}

\subsection{On the proof}

Our starting point to prove Theorems \ref{MainThm1} and \ref{MainThm2} is to write in Propositions \ref{MoMrep} and \ref{CbetaECorr} respectively both the moments of moments and the correlation functions of $\textnormal{C}\beta\textnormal{E}_N$ in terms of two different types of moments of the normalised characteristic polynomial $\mathfrak{q}_N^{\beta,\delta}$ of the circular Jacobi beta ensemble $\mathcal{CJ}_{N,\beta,\delta}$, see Definitions \ref{CJbetaEdef} and \ref{DefNormalisedChar}. The expression we obtain for $\mathsf{M}_N^{(\beta)}(k;s)$ in Proposition \ref{MoMrep} is  non-obvious and very judiciously chosen. It is worth comparing it with the natural rewriting of the $\mathsf{M}_N^{(\beta)}(k;s)$ formula to deal with the GMC regime, see Remark \ref{FormulaComparison}. It is then known from the work of Li and Valk\'{o} \cite{LiValko}, see Proposition \ref{LiValkoProp}, and \cite{CNN,ValkoVirag,RandomAnalytic,LambertPaquetteAiry,LambertPaquetteBulk} for related models, that under a certain scaling, $\mathfrak{q}_N^{\beta,\delta}$ converges in distribution, in the topology on entire functions induced by uniform convergence on compact sets in $\mathbb{C}$, to a random entire function $\tilde{\boldsymbol{\xi}}^{\beta,\delta}$ which we subsequently show in Proposition \ref{PropPVRep} that is equal in distribution to $\boldsymbol{\xi}^{\beta,\delta}$ from Definition \ref{StochasticZeta}.

To then prove the desired convergence of moments and establish Theorems \ref{MainThm1} and \ref{MainThm2} we require very precise moment estimates which are uniform in both $N$ and the variable of the polynomial $\mathfrak{q}_N^{\beta,\delta}$. All of these estimates will be derived from the same main bound of Theorem \ref{MainBound} which is the main technical innovation of our paper.  In equivalent form, see in particular Proposition \ref{FinalBoundProp}, this uniform bound says the following about joint moments of $(|\mathsf{X}_N(\mathrm{e}^{\mathrm{i}\theta_j})|^{2s_j},j=1,\dots,\ell)$,
\begin{equation*}
\mathbb{E}_{\mathrm{C}\beta\mathrm{E}_N}\left[\prod_{j=1}^\ell \left|\mathsf{X}_N\left(\mathrm{e}^{\mathrm{i}\theta_j}\right)\right|^{2s_j}\right]\le C_{\beta;\ell;(s_j)_{j=1}^\ell} N^{\frac{2}{\beta}\sum_{j=1}^\ell s_j^2}\prod_{1\le j<m \le \ell} \min\left(N,\frac{1}{2\left|\sin\left(\frac{\theta_j-\theta_m}{2}\right)\right|}\right)^{\frac{4}{\beta}s_js_m} 
\end{equation*}
for a constant $C_{\beta;\ell;(s_j)_{j=1}^\ell}$ which is independent of both $N$ and the $(\theta_j)_{j=1}^\ell$. A testament to the fact that this bound is sharp is that it captures perfectly the integrability required to get the optimal range, for $k\in \mathbb{N}$, in Theorem \ref{MainThm1}, see in particular Proposition \ref{PropIntegerMoments} and Lemma \ref{IntegralFinite}. Moreover, this result specialises for $\beta=2$ to the main result of Fahs \cite{Fahs}, when applied to the CUE characteristic polynomial, which is proven using a very delicate asymptotic analysis of Toeplitz determinants with symbols with multiple merging Fisher-Hartwig singularities. For $\beta\neq 2$ such Toeplitz determinant structure is absent. Instead we make use of random orthogonal polynomials on the unit circle first introduced by Killip and Nenciu in \cite{KillipNenciu} in relation to $\mathrm{C}\beta\mathrm{E}_N$. Parts of the argument are guided, rather implicitly, by some intuition coming from branching structures used in the study of conjecture \eqref{MaxConjecture} on the maximum of $\mathsf{L}_N(\cdot)$, see for example \cite{ChhaibiMadauleNajnudel}. We believe this bound on joint moments is of independent interest and will have other applications in the future.

\section{Proofs of main results}

In this section we prove all our results except the main bound in Theorem \ref{MainBound} whose rather lengthy proof is deferred to Section \ref{SectionMainBound}.

\begin{defn}\label{CJbetaEdef}
Define, for $\beta,\delta>0$, the circular Jacobi beta ensemble $\mathcal{CJ}_{N,\beta,\delta}$ to be the probability measure on $\mathbb{T}^N$ given by
\begin{equation*}
\frac{1}{\mathcal{Z}_{N,\beta,\delta}}\prod_{1\le j <k \le N} \left|\mathrm{e}^{\textnormal{i}\theta_j}-\mathrm{e}^{\textnormal{i}\theta_k}\right|^\beta \prod_{j=1}^N \left|1-\mathrm{e}^{\textnormal{i}\theta_j}\right|^{2\delta}\mathrm{d}\theta_1\cdots \mathrm{d}\theta_N,
\end{equation*} 
where $\mathcal{Z}_{N,\beta,\delta}$ is an explicit normalisation constant.
\end{defn}
Observe that, when $\delta=0$, $\mathcal{CJ}_{N,\beta,0}$ is simply $\textnormal{C}\beta \textnormal{E}_N$. $\mathcal{CJ}_{N,\beta,\delta}$ can in fact be defined more generally for $\Re(\delta)>-1/2$ but we will not need this here.  The following notation will be very convenient.

\begin{defn}\label{DefNormalisedChar}
For $\beta,\delta>0$, $N \in \mathbb{N}$, define the random polynomial $\mathfrak{q}^{\beta,\delta}_N$ by
\begin{equation}
\mathfrak{q}^{\beta,\delta}_N(z)\overset{\textnormal{def}}{=}\prod_{j=1}^N \frac{z-\mathrm{e}^{\mathrm{i}\theta_j}}{1-\mathrm{e}^{\mathrm{i}\theta_j}}
\end{equation}
where the random points $(\mathrm{e}^{\mathrm{i}\theta_j})_{j=1}^N$  are assumed to be $\mathcal{CJ}_{N,\beta,\delta}$-distributed.  
\end{defn}

The following two exact finite $N$ results will be our starting points for Theorem \ref{MainThm1} and Theorem \ref{MainThm2} respectively. 

\begin{prop}\label{MoMrep}
For $\beta>0$, $k,s \in \mathbb{R}_+$, we have
\begin{equation*}
\mathsf{M}^{(\beta)}_N(k;s)=\mathbb{E}_{\textnormal{C}\beta \textnormal{E}_N}\left[\left|\mathsf{X}_N(1)\right|^{2ks}\right] \mathbb{E}\left[\left(\frac{1}{2\pi}\int_{-\pi}^\pi \left|\mathfrak{q}_N^{\beta,ks}(\mathrm{e}^{\mathrm{i}\theta})\right|^{2s}\mathrm{d}\theta\right)^{k-1}\right].
\end{equation*}
\end{prop}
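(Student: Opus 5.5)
The identity follows from three elementary steps: peel off one factor of the integral, use rotation invariance of $\textnormal{C}\beta\textnormal{E}_N$ to pin the peeled-off point at $\theta=0$, and then absorb a power of $|\mathsf{X}_N(1)|$ into the measure as a Jacobi-type tilt. Everything involved is nonnegative, so both sides can be treated as elements of $[0,\infty]$ and all interchanges of integrals are justified by Tonelli's theorem. We never need $k$ to be an integer; only $k\ge 0$ and the positivity of the integrands are used, with the convention $0^{k-1}$ irrelevant since the events involved have probability zero.

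\emph{Step 1 (peeling off one factor).} Write $I_N=\frac{1}{2\pi}\int_{-\pi}^{\pi}|\mathsf{X}_N(\mathrm{e}^{\mathrm{i}\theta})|^{2s}\mathrm{d}\theta$, so that $I_N^k=\frac{1}{2\pi}\int_{-\pi}^{\pi}|\mathsf{X}_N(\mathrm{e}^{\mathrm{i}\phi})|^{2s}I_N^{k-1}\mathrm{d}\phi$. By Tonelli,
\[
\mathsf{M}^{(\beta)}_N(k;s)=\frac{1}{2\pi}\int_{-\pi}^{\pi}\mathbb{E}_{\textnormal{C}\beta\textnormal{E}_N}\big[|\mathsf{X}_N(\mathrm{e}^{\mathrm{i}\phi})|^{2s}I_N^{k-1}\big]\mathrm{d}\phi .
\]
The $\textnormal{C}\beta\textnormal{E}_N$ density is invariant under the rotation $\theta_j\mapsto\theta_j+\phi$. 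Under this rotation $\mathsf{X}_N(\mathrm{e}^{\mathrm{i}\theta})$ becomes $\mathsf{X}_N(\mathrm{e}^{\mathrm{i}(\theta-\phi)})$, while $I_N$ is unchanged because it integrates over the full circle. Hence the integrand is independent of $\phi$, and
\[
\mathsf{M}^{(\beta)}_N(k;s)=\mathbb{E}\big[|\mathsf{X}_N(1)|^{2s}I_N^{k-1}\big].
\]

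\emph{Step 2 (factoring out $\mathsf{X}_N(1)$).} Almost surely no $\theta_j$ equals $0$, so $\mathsf{X}_N(1)\neq0$ almost surely. We can therefore write $I_N=|\mathsf{X}_N(1)|^{2s}\,\frac{1}{2\pi}\int_{-\pi}^{\pi}|\mathsf{X}_N(\mathrm{e}^{\mathrm{i}\theta})/\mathsf{X}_N(1)|^{2s}\mathrm{d}\theta$. Substituting this into Step 1 gives
\[
\mathsf{M}^{(\beta)}_N(k;s)=\mathbb{E}\Big[|\mathsf{X}_N(1)|^{2ks}\Big(\frac{1}{2\pi}\int_{-\pi}^{\pi}\Big|\frac{\mathsf{X}_N(\mathrm{e}^{\mathrm{i}\theta})}{\mathsf{X}_N(1)}\Big|^{2s}\mathrm{d}\theta\Big)^{k-1}\Big].
\]
For each $j$, using $|1-\mathrm{e}^{\mathrm{i}\theta}\mathrm{e}^{-\mathrm{i}\theta_j}|=|\mathrm{e}^{\mathrm{i}\theta}-\mathrm{e}^{\mathrm{i}\theta_j}|$ and $|1-\mathrm{e}^{-\mathrm{i}\theta_j}|=|1-\mathrm{e}^{\mathrm{i}\theta_j}|$, we get
\[
\big|\mathsf{X}_N(\mathrm{e}^{\mathrm{i}\theta})/\mathsf{X}_N(1)\big|=\prod_j\frac{|\mathrm{e}^{\mathrm{i}\theta}-\mathrm{e}^{\mathrm{i}\theta_j}|}{|1-\mathrm{e}^{\mathrm{i}\theta_j}|},
\]
which is exactly the modulus of the polynomial in Definition \ref{DefNormalisedChar}, evaluated at the points $(\theta_j)$.

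\emph{Step 3 (change of measure).} The weight $|\mathsf{X}_N(1)|^{2ks}=\prod_j|1-\mathrm{e}^{\mathrm{i}\theta_j}|^{2ks}$ multiplies the $\textnormal{C}\beta\textnormal{E}_N$ density into an unnormalised $\mathcal{CJ}_{N,\beta,ks}$ density. The total mass of this unnormalised density is $\mathbb{E}_{\textnormal{C}\beta\textnormal{E}_N}[|\mathsf{X}_N(1)|^{2ks}]$, which is finite and positive by the Selberg integral. Dividing and multiplying by this constant, the remaining expectation is taken under $\mathcal{CJ}_{N,\beta,ks}$, and the integrand is the $(k-1)$-th power of $\frac{1}{2\pi}\int_{-\pi}^{\pi}|\mathfrak{q}_N^{\beta,ks}(\mathrm{e}^{\mathrm{i}\theta})|^{2s}\mathrm{d}\theta$. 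This is precisely the claimed identity.

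There is no genuine obstacle. The only points needing care are the rotation-invariance step, where one must note that $I_N$ is rotation invariant as a function of the points, and the measure-zero issue $\mathsf{X}_N(1)=0$. Both are handled as above.
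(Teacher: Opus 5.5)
Your proposal is correct and matches the paper's proof: you average over the peeled-off point $\phi$, use rotation invariance of $\textnormal{C}\beta\textnormal{E}_N$ to move it to $1$, pull out $|\mathsf{X}_N(1)|^{2ks}$, and tilt to $\mathcal{CJ}_{N,\beta,ks}$. Your extra care (Tonelli, $\mathsf{X}_N(1)\neq 0$ almost surely, and the identification $|\mathsf{X}_N(\mathrm{e}^{\mathrm{i}\theta})/\mathsf{X}_N(1)|=|\mathfrak{q}_N^{\beta,ks}(\mathrm{e}^{\mathrm{i}\theta})|$) fills in details the paper leaves implicit.
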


\begin{proof}
We write, using rotation invariance of $\textnormal{C}\beta \textnormal{E}_N$
\begin{align*}
\mathsf{M}^{(\beta)}_N(k;s)&=\frac{1}{2\pi}\int_{-\pi}^\pi \mathbb{E}_{\textnormal{C}\beta \textnormal{E}_N}\left[\left|\mathsf{X}_N(\mathrm{e^{\mathrm{i}\phi}})\right|^{2s}\left(\frac{1}{2\pi}\int_{-\pi}^\pi \left|\mathsf{X}_N(\mathrm{e}^{\mathrm{i}\theta})\right|^{2s}\mathrm{d}\theta\right)^{k-1}\right]\mathrm{d}\phi\\
&=\mathbb{E}_{\textnormal{C}\beta \textnormal{E}_N}\left[\left|\mathsf{X}_N(1)\right|^{2ks}\left(\frac{1}{2\pi}\int_{-\pi}^\pi \frac{\left|\mathsf{X}_N(\mathrm{e}^{\mathrm{i}\theta})\right|^{2s}}{\left|\mathsf{X}_N(1)\right|^{2s}}\mathrm{d}\theta\right)^{k-1}\right].
\end{align*}
Then, a simple change of measure from $\textnormal{C}\beta \textnormal{E}_N$ to $\mathcal{CJ}_{N,\beta,ks}$ gives the result.
\end{proof}

\begin{rmk}\label{FormulaComparison}
It is worth comparing the formula for $\mathsf{M}_N^{(\beta)}(k;s)$ obtained in Proposition \ref{MoMrep} to the more obvious expression
\begin{equation*}
\mathsf{M}_N^{(\beta)}(k;s)= \left(\mathbb{E}_{\textnormal{C}\beta \textnormal{E}_N}\left[\left|\mathsf{X}_N(1)\right|^{2s}\right]  \right)^k \mathbb{E}_{\textnormal{C}\beta \textnormal{E}_N}\left[\left(\frac{1}{2\pi}\int_{-\pi}^\pi \frac{\left|\mathsf{X}_N(\mathrm{e}^{\mathrm{i}\theta})\right|^{2s}}{\mathbb{E}_{\textnormal{C}\beta \textnormal{E}_N}\left[\left|\mathsf{X}_N(\mathrm{e}^{\mathrm{i}\theta})\right|^{2s}\right] }\mathrm{d}\theta\right)^k\right].
\end{equation*}
From \eqref{GMCconvergence} one expects and indeed it can be proven for part of the subcritical regime, at least when $\beta=2$, see Appendix A of \cite{KeatingWong}, that the second expectation above converges to,
\begin{equation*}
\frac{1}{(2\pi)^k}\mathbb{E}\left[\mathsf{GMC}_{2s\beta^{-\frac{1}{2}}}(\mathbb{T})^k\right]=\frac{\Gamma\left(1-\frac{2ks^2}{\beta}\right)}{\Gamma \left(1-\frac{2s}{\beta}\right)^k},
\end{equation*}
with the explicit evaluation by virtue of the Fyodorov-Bouchaud formula \cite{FyodorovBouchaud,Remy,ChhaibiNajnudel}. Note that this formula does not make sense in the regime we are concerned with, $2ks^2>\beta$.
\end{rmk}

\begin{prop}\label{CbetaECorr}
Let $\boldsymbol{\rho}_{N,\beta}^{(m)}$ be the $m$-th correlation function of $\textnormal{C}\beta \textnormal{E}_N$. Then, for $N > m$, 
\begin{align*}
\boldsymbol{\rho}_{N,\beta}^{(m)}(x_1,\dots,x_m)&=\frac{N!}{(N-m)!}\frac{\mathcal{Z}_{N-m,\beta}}{\mathcal{Z}_{N,\beta}}  \prod_{1 \le j <k \le m}\left|\mathrm{e}^{\mathrm{i}x_j}-\mathrm{e}^{\mathrm{i}x_k}\right|^\beta \times  \\ 
& \ \ \ \  \mathbb{E}_{\textnormal{C}\beta \textnormal{E}_{N-m}}\left[\left|\mathsf{X}_{N-m}(1)\right|^{m\beta}\right] \mathbb{E}\left[\prod_{j=2}^m \left|\mathfrak{q}_{N-m}^{\beta,m\beta/2}\left(\mathrm{e}^{\mathrm{i}(x_j-x_1)}\right)\right|^\beta\right].
\end{align*}
\end{prop}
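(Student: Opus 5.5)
The plan is to compute the correlation function directly from the definition and then perform two successive changes of measure, in the same spirit as the proof of Proposition \ref{MoMrep}.

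First, the $m$-th correlation function of the exchangeable density $\mathcal{Z}_{N,\beta}^{-1}\prod_{j<k}|\mathrm{e}^{\mathrm{i}\theta_j}-\mathrm{e}^{\mathrm{i}\theta_k}|^\beta$ is obtained by fixing $\theta_j=x_j$ for $j\le m$ and integrating out the remaining $N-m$ variables, multiplied by the combinatorial factor $N!/(N-m)!$. In the integrand, the Vandermonde factors split into three groups. The pairs inside $\{1,\dots,m\}$ give $\prod_{1\le j<k\le m}|\mathrm{e}^{\mathrm{i}x_j}-\mathrm{e}^{\mathrm{i}x_k}|^\beta$. The pairs inside $\{m+1,\dots,N\}$ give the $\textnormal{C}\beta\textnormal{E}_{N-m}$ density up to the factor $\mathcal{Z}_{N-m,\beta}$. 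The cross pairs give $\prod_{j=1}^m\prod_{l=m+1}^N|\mathrm{e}^{\mathrm{i}x_j}-\mathrm{e}^{\mathrm{i}\theta_l}|^\beta=\prod_{j=1}^m|\mathsf{X}_{N-m}(\mathrm{e}^{\mathrm{i}x_j})|^\beta$, where $\mathsf{X}_{N-m}$ is built from the remaining points. This yields
\begin{equation*}
\boldsymbol{\rho}_{N,\beta}^{(m)}=\frac{N!}{(N-m)!}\frac{\mathcal{Z}_{N-m,\beta}}{\mathcal{Z}_{N,\beta}}\prod_{j<k}\left|\mathrm{e}^{\mathrm{i}x_j}-\mathrm{e}^{\mathrm{i}x_k}\right|^\beta\,\mathbb{E}_{\textnormal{C}\beta\textnormal{E}_{N-m}}\left[\prod_{j=1}^m\left|\mathsf{X}_{N-m}(\mathrm{e}^{\mathrm{i}x_j})\right|^\beta\right].
\end{equation*}

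Next, I use rotation invariance of $\textnormal{C}\beta\textnormal{E}_{N-m}$ to rotate by $-x_1$, so that the expectation becomes $\mathbb{E}[|\mathsf{X}_{N-m}(1)|^\beta\prod_{j\ge2}|\mathsf{X}_{N-m}(\mathrm{e}^{\mathrm{i}(x_j-x_1)})|^\beta]$. I then multiply and divide by $|\mathsf{X}_{N-m}(1)|^{m\beta}$, writing
\begin{equation*}
|\mathsf{X}_{N-m}(1)|^\beta\prod_{j\ge2}|\mathsf{X}_{N-m}(\mathrm{e}^{\mathrm{i}y_j})|^\beta=|\mathsf{X}_{N-m}(1)|^{m\beta}\prod_{j\ge 2}\frac{|\mathsf{X}_{N-m}(\mathrm{e}^{\mathrm{i}y_j})|^\beta}{|\mathsf{X}_{N-m}(1)|^\beta},
\end{equation*}
with $y_j=x_j-x_1$. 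Note that $|\mathsf{X}_{N-m}(1)|^{m\beta}=\prod_l|1-\mathrm{e}^{\mathrm{i}\theta_l}|^{2\cdot m\beta/2}$. Weighting $\textnormal{C}\beta\textnormal{E}_{N-m}$ by this quantity and normalising by its expectation gives exactly $\mathcal{CJ}_{N-m,\beta,m\beta/2}$. Under that measure the ratio is $\prod_{j\ge2}|\mathfrak{q}^{\beta,m\beta/2}_{N-m}(\mathrm{e}^{\mathrm{i}y_j})|^\beta$, since $|z-\mathrm{e}^{\mathrm{i}\theta}|=|1-z\mathrm{e}^{-\mathrm{i}\theta}|$ for $|z|=1$.

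This is a direct computation with no real obstacle. The only care needed is in the bookkeeping of constants: the symmetrisation factor $N!/(N-m)!$ and the ratio of partition functions. The condition $N>m$ guarantees that $\textnormal{C}\beta\textnormal{E}_{N-m}$ is nontrivial.
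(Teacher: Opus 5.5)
Your proposal is correct and follows essentially the same route as the paper: integrate out the last $N-m$ variables to get $\frac{N!}{(N-m)!}\frac{\mathcal{Z}_{N-m,\beta}}{\mathcal{Z}_{N,\beta}}\prod_{j<k}|\mathrm{e}^{\mathrm{i}x_j}-\mathrm{e}^{\mathrm{i}x_k}|^\beta\,\mathbb{E}_{\textnormal{C}\beta\textnormal{E}_{N-m}}[\prod_{j=1}^m|\mathsf{X}_{N-m}(\mathrm{e}^{\mathrm{i}x_j})|^\beta]$. The paper then also rotates by $-x_1$ and changes measure to $\mathcal{CJ}_{N-m,\beta,m\beta/2}$ via the weight $|\mathsf{X}_{N-m}(1)|^{m\beta}$.
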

\begin{proof}
By definition, we have
\begin{align*}
\boldsymbol{\rho}_{N,\beta}^{(m)}(x_1,\dots,x_m)=\frac{N!}{(N-m)!} \frac{1}{\mathcal{Z}_{N,\beta}}&\int_{[-\pi,\pi)^{N-m}} \prod_{1\le j <k \le m}\left|\mathrm{e}^{\mathrm{i}x_j}-\mathrm{e}^{\mathrm{i}x_k}\right|^\beta \prod_{j=1}^m \prod_{k=m+1}^N\left|\mathrm{e}^{\mathrm{i}x_j}-\mathrm{e}^{\mathrm{i}\theta_k}\right|^\beta \\
&\prod_{m+1 \le j <k \le N}\left|\mathrm{e}^{\mathrm{i}\theta_j}-\mathrm{e}^{\mathrm{i}\theta_k}\right|^\beta\mathrm{d}\theta_{m+1}\cdots \mathrm{d}\theta_N.
\end{align*}
Hence, we can write,
\begin{equation*}
\boldsymbol{\rho}_{N,\beta}^{(m)}(x_1,\dots,x_m)=\frac{N!}{(N-m)!}\frac{\mathcal{Z}_{N-m,\beta}}{\mathcal{Z}_{N,\beta}}\prod_{1\le j<k \le m}\left|\mathrm{e}^{\mathrm{i}x_j}-\mathrm{e}^{\mathrm{ix_k}}\right|^\beta \mathbb{E}_{\textnormal{C}\beta \textnormal{E}_{N-m}}\left[\prod_{j=1}^m\left|\mathsf{X}_{N-m}(\mathrm{e}^{\mathrm{i}x_j})\right|^\beta\right].
\end{equation*}
Using rotational invariance of $\textnormal{C}\beta \textnormal{E}_N$, the expectation in the last display is equal to
\begin{equation*}
\mathbb{E}_{\textnormal{C}\beta \textnormal{E}_{N-m}}\left[\left|\mathsf{X}_{N-m}(1)\right|^\beta \prod_{j=2}^m \left|\mathsf{X}_{N-m}\left(\mathrm{e}^{\mathrm{i}(x_j-x_1)}\right)\right|^\beta\right]
\end{equation*}
and the result follows by a change of measure to $\mathcal{CJ}_{N-m,\beta,m\beta/2}$.
\end{proof}

We need some preliminary results from the literature. Firstly, the following lemma is well-known, see \cite{RandomDets}.
\begin{lem}\label{AsymptoticsPartition}
For $\beta>0$, $r \in \mathbb{R}_+$, we have, as $N \to \infty$,
\begin{equation*}
N^{-\frac{2r^2}{\beta}}\mathbb{E}_{\textnormal{C}\beta \textnormal{E}_N}\left[\left|\mathsf{X}_N(1)\right|^{2r}\right]\longrightarrow \mathrm{e}^{\mathcal{Y}_\beta\left(1+2r-\beta/2\right)-2\mathcal{Y}_\beta \left(1+r-\beta/2\right)+\mathcal{Y}_\beta(1-\beta/2)}.
\end{equation*}
\end{lem}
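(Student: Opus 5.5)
The plan is to compute $\mathbb{E}_{\textnormal{C}\beta \textnormal{E}_N}\left[\left|\mathsf{X}_N(1)\right|^{2r}\right]$ exactly as a ratio of Selberg-type normalisation constants, and then extract the large-$N$ asymptotics from the Barnes $G$ and Gamma function asymptotics encoded in $\mathcal{Y}_\beta$.

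\textbf{Step 1: exact evaluation.} Since $|\mathsf{X}_N(1)|^{2r}=\prod_{j}|1-\mathrm{e}^{\mathrm{i}\theta_j}|^{2r}$, we have
\begin{equation*}
\mathbb{E}_{\textnormal{C}\beta \textnormal{E}_N}\left[\left|\mathsf{X}_N(1)\right|^{2r}\right]=\frac{\mathcal{Z}_{N,\beta,r}}{\mathcal{Z}_{N,\beta}}.
\end{equation*}
The Morris integral (the circular form of the Selberg integral) evaluates this as
\begin{equation*}
\prod_{j=0}^{N-1}\frac{\Gamma\left(1+\frac{\beta j}{2}\right)\Gamma\left(1+2r+\frac{\beta j}{2}\right)}{\Gamma\left(1+r+\frac{\beta j}{2}\right)^2}.
\end{equation*}
For $\beta=2$ this reduces to the Keating--Snaith formula, which serves as a sanity check.

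\textbf{Step 2: asymptotics of the product.} Set
\begin{equation*}
P_N(a)=\prod_{j=0}^{N-1}\Gamma\left(a+\frac{\beta j}{2}\right),
\end{equation*}
so the quantity equals $P_N(1)P_N(1+2r)/P_N(1+r)^2$. Use the large-$N$ expansion
\begin{equation*}
\log P_N(a)=A_\beta(N)+B_\beta(N)\,a+\frac{a^2}{\beta}\log N-\mathcal{Y}_\beta\left(a-\frac{\beta}{2}\right)+o(1),
\end{equation*}
where $A_\beta$ and $B_\beta$ do not depend on $a$ (up to sign and shift conventions, which are to be matched to those of \cite{RandomDets}). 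This expansion can be obtained from Binet's integral representation of $\log\Gamma$ by summing the geometric series in $j$. That summation produces exactly the integral term $\int_0^\infty(\cdots)\frac{\mathrm{e}^{-xz}-1}{\mathrm{e}^{x\beta/2}-1}\,\mathrm{d}x$ together with the Barnes $G$ and $\log\Gamma$ pieces appearing in $\mathcal{Y}_\beta$.

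\textbf{Step 3: cancellation.} The combination $a\mapsto \log P_N(1)+\log P_N(1+2r)-2\log P_N(1+r)$ is a second difference in $a$. It therefore annihilates the $N$-dependent terms that are constant or linear in $a$. The quadratic term contributes $\frac{1}{\beta}\left(1+(1+2r)^2-2(1+r)^2\right)\log N=\frac{2r^2}{\beta}\log N$. What remains is $\mathcal{Y}_\beta(1+2r-\beta/2)-2\mathcal{Y}_\beta(1+r-\beta/2)+\mathcal{Y}_\beta(1-\beta/2)$, which is the claimed limit.

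\textbf{Main obstacle.} The real work is the uniform Binet/Euler--Maclaurin computation in Step 2. One must verify that the $o(1)$ error terms are valid for fixed real $a$, and that the constants match the stated definition of $\mathcal{Y}_\beta$, including the $\frac{z^2}{\beta}+\frac{z}{2}$ correction. Since this is exactly Lemma 7.1 and the surrounding analysis of \cite{RandomDets}, I would cite it there and only check the bookkeeping.
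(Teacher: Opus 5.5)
Your route is the one behind the paper's citation. The paper gives no proof of its own and only refers to \cite{RandomDets}, where the moment is written via the Morris (circular Selberg) integral as $P_N(1)P_N(1+2r)/P_N(1+r)^2$, and the asymptotics come from expanding $\sum_{j}\log\Gamma(a+\beta j/2)$ with Binet's formula, as in your Steps 1--3. Your exact formula in Step 1 and the $\frac{2r^2}{\beta}\log N$ count in Step 3 are correct.

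There is one sign slip to fix. The expansion in Step 2 should read $\log P_N(a)=A_\beta(N)+B_\beta(N)a+\frac{a^2}{\beta}\log N+\mathcal{Y}_\beta(a-\frac{\beta}{2})+o(1)$, with a plus sign. To see this, write $z=a-\beta/2$, so that $\log P_N(a)=\sum_{j=1}^{N}\log\Gamma(z+j\beta/2)$. The summed Binet remainders then produce the integral term of $\mathcal{Y}_\beta(z)$ with a plus sign. The lower endpoint of the Stirling part produces $+\frac{\beta}{2}\log G(1+\frac{2z}{\beta})-(z-\frac12)\log\Gamma(1+\frac{2z}{\beta})$.

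With the minus sign you displayed, Step 3 would give the reciprocal of the claimed limit, so Step 3 does not follow from Step 2 as written. A quick check is $\beta=2$. There $P_N(a)=G(a+N)/G(a)$, whose $a$-dependent finite part, modulo terms linear in $a$, is $-\log G(1+z)$. Unwinding the definition gives $\mathcal{Y}_2(z)=-\log G(1+z)+\frac{z}{2}\log 2\pi$, which with the plus sign reproduces the Keating--Snaith constant $G(1+r)^2/G(1+2r)$.
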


We also recall the following result due to Li-Valk\'{o} \cite{LiValko}. It generalises previous results of Valk\'{o}-Virag \cite{ValkoVirag} to $\delta>0$. For the special case $\beta=2,\delta=0$ see even earlier work \cite{CNN}.

\begin{prop}\label{LiValkoProp} Let $\beta,\delta>0$.
Then, there exists a coupling of the $(\mathcal{CJ}_{N,\beta,\delta})_{N=1}^\infty$ and $\mathcal{HP_{\beta,\delta}}$ such that almost surely, as $N \to \infty$,
\begin{equation*}
\mathfrak{q}^{\beta,\delta}_N(\mathrm{e}^{\mathrm{i}z/N})\mathrm{e}^{-\mathrm{i}z/2}\longrightarrow\tilde{\boldsymbol{\xi}}^{\beta,\delta}(z),
\end{equation*}
uniformly on compacts in $z\in \mathbb{C}$ and the entire function's $\tilde{\boldsymbol{\xi}}_{\infty}^{\beta,\delta}$ zero set is exactly given by $\mathcal{HP}_{\beta,\delta}$.
\end{prop}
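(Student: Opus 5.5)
The statement is due to Li and Valk\'{o} \cite{LiValko}, so in the paper it can simply be quoted. If one wanted to reprove it, the plan would be the random orthogonal polynomial route, following \cite{ValkoVirag} for $\delta=0$ and \cite{CNN} for $\beta=2$.

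\emph{Step 1: exact finite-$N$ representation.} Realise $\mathcal{CJ}_{N,\beta,\delta}$ as the spectral measure of a CMV-type model whose deformed Verblunsky coefficients $(\gamma_k)_{k=0}^{N-1}$ are independent with explicit laws, following Bourgade--Nikeghbali--Rouault (the $\delta$-analogue of Killip--Nenciu \cite{KillipNenciu}). In this normalisation the orthogonal polynomials are normalised at $z=1$. Hence $\mathfrak{q}_N^{\beta,\delta}(z)=\Phi_N(z)/\Phi_N(1)$ is produced by a product of $N$ independent random $2\times 2$ transfer matrices acting on $(\Phi_k,\Phi_k^*)$ and divided by $\prod_k(1-\gamma_k)$. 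The laws satisfy $\mathbb{E}[\gamma_k]\approx \delta/k$ and $\mathbb{E}|\gamma_k|^2\approx \frac{2}{\beta k}$. This is exactly what will produce the drift $-\mathrm{i}\delta\,\mathrm{d}t$ and the complex Brownian noise in the limiting SDE.

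\emph{Step 2: a single coupling in which the recursion becomes an SDE.}
\begin{itemize}
\item Set $z=\mathrm{e}^{\mathrm{i}\lambda/N}$ and use the logarithmic time $k\leftrightarrow t$ with $k/N\approx \mathrm{e}^{\beta t/4}$.
\item Embed all the $\gamma_k$ into one two-sided complex Brownian motion $\mathsf{w}_{\mathbb{C}}$, using a Skorokhod or KMT type embedding. Up to $O(k^{-3/2})$ errors, $\sqrt{\beta k/2}\,(\gamma_k-\delta/k)$ then equals the Brownian increment over the corresponding time interval.
\item Write the relative Pr\"ufer phase of $\Phi_k(\mathrm{e}^{\mathrm{i}\lambda/N})$ against $\Phi_k(1)$, together with the log-modulus of the ratio. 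Check that the phase satisfies a discretisation of the generalised SDE for $\mathsf{p}^{(\beta,\delta)}_\lambda$, and that the full ratio satisfies a discretisation of the linear Dirac-type SDE of \cite{LiValko}. The phase factor $\mathrm{e}^{-\mathrm{i}z/2}$ comes from symmetrising $\Phi_N$ into a self-reciprocal form.
\item Prove almost sure convergence at each fixed $\lambda$ by standard diffusion-approximation estimates on $[T_0,0]$. Control the early region $t\to-\infty$ (small $k$) separately, using the fact that there the phases are $O(\lambda k/N)$.
\end{itemize}

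\emph{Step 3: uniform convergence on compacts in $z\in\mathbb{C}$ and the zero set.}
\begin{itemize}
\item Obtain locally uniform moment bounds on $\mathfrak{q}_N^{\beta,\delta}(\mathrm{e}^{\mathrm{i}z/N})$ for complex $z$. Gr\"onwall applied to the transfer-matrix product gives growth at most $\mathrm{e}^{C|z|}$.
\item Combined with pointwise convergence on a dense countable set, Vitali's theorem upgrades this to almost sure uniform convergence on compacts. Call the limit $\tilde{\boldsymbol{\xi}}^{\beta,\delta}$.
\item For the zeros: the zeros of $z\mapsto\mathfrak{q}_N(\mathrm{e}^{\mathrm{i}z/N})$ are exactly $N\theta_j$. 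The identity $\Phi_N^*(z)=z^N\overline{\Phi_N(1/\bar z)}$ shows they are the $\lambda$ at which the terminal relative phase hits a boundary angle. This angle is determined by the last coefficient, which is unimodular, and in the limit it is distributed as $\boldsymbol{\Theta}_\delta$, independent of the rest.
\item By Hurwitz's theorem the zeros converge to the zeros of $\tilde{\boldsymbol{\xi}}^{\beta,\delta}$. Since $\lambda\mapsto\mathsf{p}_\lambda^{(\beta,\delta)}(0)$ is strictly increasing, these zeros are exactly $\{\lambda:\mathsf{p}^{(\beta,\delta)}_\lambda(0)=\boldsymbol{\Theta}_\delta \bmod 2\pi\}=\mathcal{HP}_{\beta,\delta}$.
\end{itemize}

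The main obstacle will be making the coupling good enough to give almost sure convergence simultaneously for all $\lambda$ in compacts, and to control the two ends of the time axis. For small $k$ the noise is not small. For $k$ close to $N$ the $\delta/k$ drift and the boundary term must be matched exactly with $\boldsymbol{\Theta}_\delta$. For the first I would try monotonicity of the phases in $\lambda$, which upgrades pointwise convergence to locally uniform convergence for the phases. For the second I would use the explicit density of the last coefficient.
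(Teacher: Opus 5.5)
Your proposal matches the paper, which gives no argument for this proposition and simply recalls it from Li--Valk\'{o} \cite{LiValko} (extending \cite{ValkoVirag} for $\delta=0$ and \cite{CNN} for $\beta=2$, $\delta=0$), exactly as you suggest. Your reproof sketch goes beyond the paper and is not Li--Valk\'{o}'s route, since they work at the level of random Dirac operators and obtain the limit as the secular function of the Hua--Pickrell operator; if you pursue it, note two corrections: uniform-in-$N$ moment bounds only give tightness, so your Vitali step needs an almost sure local bound in your coupling (or a Skorokhod re-coupling) to yield almost sure convergence, and with your convention $\prod_k(1-\gamma_k)$ one has $\mathbb{E}[\gamma_k]\approx -2\delta/(\beta k)$ rather than $\delta/k$.
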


Li-Valk\'{o} give an interesting characterisation of $\tilde{\boldsymbol{\xi}}_\infty^{\beta,\delta}$ in terms of the solution of an entire function-valued SDE and also a representation of the Taylor coefficients of $\tilde{\boldsymbol{\xi}}_\infty^{\beta,\delta}$ in terms of iterated integrals of Brownian motions, see \cite{LiValko}. To connect to the principal value product from Definition \ref{StochasticZeta} we prove the following.  

\begin{prop}\label{PropPVRep}
For $\beta, \delta >0$, we have 
\begin{equation}
\boldsymbol{\xi}^{\beta,\delta}\overset{\textnormal{d}}{=} \tilde{\boldsymbol{\xi}}^{\beta,\delta}.
\end{equation}
Moreover, convergence in the product representation \eqref{PVrep} of $\boldsymbol{\xi}^{\beta,\delta}$ is uniform on compact sets in $\mathbb{C}$.
\end{prop}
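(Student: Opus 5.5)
The plan is to work directly in the almost sure coupling of Proposition \ref{LiValkoProp}. There I will show that $\tilde{\boldsymbol{\xi}}^{\beta,\delta}(z)$ equals the principal value product over its own zero set $\mathcal{HP}_{\beta,\delta}$, with uniform convergence on compacts. This gives both claims at once, and the equality in law is in fact an almost sure equality in the coupling.

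\emph{Step 1: exact finite-$N$ product.} Write $x_j=N\theta_j$ with $\theta_j\in(-\pi,\pi)$; note $\theta_j\neq 0$ almost surely. The elementary identity $\frac{\mathrm{e}^{\mathrm{i}a}-\mathrm{e}^{\mathrm{i}b}}{1-\mathrm{e}^{\mathrm{i}b}}\mathrm{e}^{-\mathrm{i}a/2}=\frac{\sin((b-a)/2)}{\sin(b/2)}$ gives
\begin{equation*}
\mathfrak{q}^{\beta,\delta}_N(\mathrm{e}^{\mathrm{i}z/N})\mathrm{e}^{-\mathrm{i}z/2}=\prod_{j=1}^N \frac{\sin\left(\frac{x_j-z}{2N}\right)}{\sin\left(\frac{x_j}{2N}\right)}=\prod_{j=1}^N\left(\cos\tfrac{z}{2N}-\cot\left(\tfrac{x_j}{2N}\right)\sin\tfrac{z}{2N}\right).
\end{equation*}
Fix $R>0$ such that almost surely $\pm R\notin\mathcal{HP}_{\beta,\delta}$; this holds for all but countably many $R$. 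Split the product into $P_N^{<R}(z)$, over $|x_j|<R$, and $P_N^{\ge R}(z)$, over the remaining factors.

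\emph{Step 2: near part.} By Proposition \ref{LiValkoProp} and Hurwitz's theorem, the zeros $x_j$ in $(-R,R)$ converge almost surely to the points of $\mathcal{HP}_{\beta,\delta}\cap(-R,R)$, with their number eventually constant. Each near factor is $(1-z/x_j)(1+O(R^2/N^2))$ uniformly for $z$ in a compact set. Hence $P_N^{<R}(z)\to\prod_{\mathsf{x}\in\mathcal{HP}_{\beta,\delta},|\mathsf{x}|<R}(1-z/\mathsf{x})$ uniformly on compacts.

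\emph{Step 3: far part (main obstacle).} I must show that $\log P_N^{\ge R}(z)\to0$ as $R\to\infty$, uniformly in $N$ and in $z$ on compacts, in probability. Taylor expansion gives
\begin{equation*}
\log P_N^{\ge R}(z)=-\frac{z}{2N}\sum_{|x_j|\ge R}\cot\left(\tfrac{x_j}{2N}\right)+O\Big(|z|^2\sum_{|x_j|\ge R}\tfrac{1}{x_j^2}+\tfrac{|z|^2}{N}\Big).
\end{equation*}
This is justified since $|\cot(x_j/2N)|/N\lesssim 1/|x_j|\le 1/R$, so each factor is close to $1$.

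The quadratic error is controlled by a first-moment bound on the counting function of $\mathcal{CJ}_{N,\beta,\delta}$: the expected number of $x_j$ in $[a,a+1]$ is bounded uniformly in $N$ and $a$. This should follow from the one-point density bound for circular Jacobi, or from the bound of Theorem \ref{MainBound} with $\ell=1$ after a change of measure. Summing then gives $\mathbb{E}\sum_{|x_j|\ge R}x_j^{-2}\lesssim 1/R$.

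The linear term is the real difficulty, because it converges only by cancellation. Since $\delta$ is real, $\mathcal{CJ}_{N,\beta,\delta}$ is invariant under $\theta\mapsto-\theta$, and $\cot$ is odd, so the linear term has mean zero. I would then bound its variance by $O(1/R)$, uniformly in $N$. My first attempt would be a summation by parts, writing the sum as $\int_{R\le|x|\le \pi N}\frac{1}{2N}\cot(\frac{x}{2N})\,\mathrm{d}(\mathcal{N}_N(x)-\mathcal{N}_N(-x))$, where $\mathcal{N}_N$ counts the $x_j$ in $[0,x]$. This reduces the problem to a bound $\mathbb{E}\left[(\mathcal{N}_N(x)-\mathcal{N}_N(-x))^2\right]\lesssim \log(2+|x|)$, uniformly in $N$. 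Such a bound should come from the Prüfer phase (Killip–Nenciu/Verblunsky) description underlying Proposition \ref{LiValkoProp}, in the form of the relative phase $\mathsf{p}_\lambda-\mathsf{p}_{-\lambda}$. Alternatively, it would follow from a standard $H^{1/2}$-type variance bound for linear statistics of circular $\beta$-Jacobi ensembles, applied to the function $\frac{1}{N}\cot(\theta/2)\mathbb{1}_{|\theta|\ge R/N}$, whose $H^{1/2}$ seminorm squared is $O(1/R)$. Chebyshev's inequality plus a maximal argument over $z$ (using that the $z$-dependence is analytic) then give the uniform control.

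\emph{Step 4: conclusion.} Combining Steps 2 and 3, for every $\varepsilon>0$ we have $\limsup_N \mathbb{P}\big(\sup_{|z|\le K}|\mathfrak{q}^{\beta,\delta}_N(\mathrm{e}^{\mathrm{i}z/N})\mathrm{e}^{-\mathrm{i}z/2}-P_N^{<R}(z)|>\varepsilon\big)\to0$ as $R\to\infty$. Passing to the limit $N\to\infty$ with Proposition \ref{LiValkoProp} shows that $\prod_{|\mathsf{x}|<R}(1-z/\mathsf{x})\to\tilde{\boldsymbol{\xi}}^{\beta,\delta}(z)$ in probability, uniformly on compacts, along admissible $R\to\infty$.

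To upgrade this to almost sure convergence along all $R$, I would use monotonicity in $R$. Between two admissible radii, the infinite-volume tail estimates from Step 3 (Fatou applied to $\sum x^{-2}$ and to the symmetric counting variance) control all intermediate truncations simultaneously. Along a dyadic sequence the bounds are summable, which gives almost sure convergence. This identifies the almost sure limit in \eqref{PVrep} with $\tilde{\boldsymbol{\xi}}^{\beta,\delta}$, proving both statements.
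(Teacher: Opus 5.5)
\paragraph{There is a genuine gap in Step 3.}
Steps 1--2 are correct. So is your control of the quadratic error in Step 3: the uniform bound on the one-point intensity of the rescaled $\mathcal{CJ}_{N,\beta,\delta}$ points does follow from Proposition \ref{FinalBoundProp} after a change of measure. The gap is the linear term $\frac{1}{2N}\sum_{|x_j|\ge R}\cot\left(\frac{x_j}{2N}\right)$, which carries the entire content of the proposition. In the limit it is exactly the claim that the principal-value sums $\sum_{|\mathsf{x}|<R}1/\mathsf{x}$ converge, to $-(\tilde{\boldsymbol{\xi}}^{\beta,\delta})'(0)$.

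Your summation by parts is a valid reduction. However, it needs a number-variance bound $\mathbb{E}[(\mathcal{N}_N(x)-\mathcal{N}_N(-x))^2]\lesssim x^{2-\epsilon}$, uniformly in $N$, on the whole range $R\le x\le \pi N$. The trivial $O(x^2)$ bound from bounded correlation functions only gives an $L^2$ bound of order $\log(\pi N/R)$, which diverges. You do not prove such a bound, and nothing in the paper supplies it. Theorem \ref{MainBound} controls moments of $|\mathfrak{q}_N^{\beta,\delta}|$, i.e. the real part of the logarithm, whereas you need the argument, i.e. the counting function.

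Your alternative route does not work as stated either. The function $\frac{1}{N}\cot(\theta/2)\mathbf{1}_{|\theta|\ge R/N}$ has jumps of size about $2/R$ at $\pm R/N$, so its $H^{1/2}$ seminorm is infinite, not $O(1/R)$. An $N$-uniform variance bound for linear statistics with frequency cutoff at $N$ would be needed instead. For circular Jacobi ensembles, which are not rotation invariant (so traces of different orders are correlated), this is a nontrivial result for general $\beta$ that you would have to establish yourself.

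The almost sure upgrade in Step 4 has the same dependence. The map $R\mapsto\sum_{|\mathsf{x}|<R}1/\mathsf{x}$ is not monotone, so you need a maximal inequality over each dyadic block. That again rests on the missing variance bound, now for $\mathcal{HP}_{\beta,\delta}$.

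\paragraph{How the paper avoids this.}
The paper never needs this cancellation estimate. It proves that $\tilde{\boldsymbol{\xi}}^{\beta,\delta}\in\mathsf{CC}$ almost surely (Proposition \ref{CartwrightProp}):
\begin{itemize}
\item the logarithmic integral is finite by Corollary \ref{BoundLimitingEntire};
\item exponential type follows from subharmonicity of $|\tilde{\boldsymbol{\xi}}^{\beta,\delta}|^s$, Poisson's inequality, the identity $|\mathfrak{q}_N^{\beta,\delta}(r\mathrm{e}^{\mathrm{i}\theta})|^s\overset{\mathrm{d}}{=}r^{Ns}|\mathfrak{q}_N^{\beta,\delta}(r^{-1}\mathrm{e}^{-\mathrm{i}\theta})|^s$, and Borel--Cantelli.
\end{itemize}
Levin's theorem for the Cartwright class then gives, pathwise, $f(z)=cz^m\mathrm{e}^{\mathrm{i}bz}\lim_{R\to\infty}\prod_{|\lambda_k|<R}(1-z/\lambda_k)$. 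Normalisation at $0$, reality on $\mathbb{R}$ and real zeros force $c=1$ and $m=b=0$. Comparing with the Li--Valk\'{o} form $\mathrm{e}^{-\boldsymbol{\gamma}z}\prod(1-z/\mathsf{x})\mathrm{e}^{z/\mathsf{x}}$ identifies $\boldsymbol{\gamma}=\lim_R\sum_{|\mathsf{x}|<R}1/\mathsf{x}$ and yields uniform convergence on compacts.

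In short, the balance of zeros you try to extract from a variance estimate is forced deterministically by the Cartwright condition. That condition needs only the modulus bounds the paper already has.
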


\begin{proof} 
Firstly, we claim that almost surely $\tilde{\boldsymbol{\xi}}_\infty^{\beta,\delta}$ belongs to the Cartwright class of entire functions $\mathsf{CC}$ defined as
\begin{equation*}
\mathsf{CC}=\left\{f  \ \textnormal{entire}: \exists \Delta >1 \textnormal{ so that } |f(z)|\le  \Delta^{1+|z|}, \textnormal{ for all } z \in \mathbb{C}, \textnormal{ and }  \int_{-\infty}^\infty\frac{\log_+|f(x)|}{1+x^2} \mathrm{d}x <\infty \right\}.
\end{equation*}
This is proven in Proposition \ref{CartwrightProp} below. Then, the argument follows the proof of Proposition 34 of \cite{ValkoVirag} which treats the case $\delta=0$. Namely, by Theorem 11 of Section V.4.4 of \cite{Levin} for $f \in \mathsf{CC}$, we have
\begin{equation*}
f(z)=cz^m\mathrm{e}^{\mathrm{i}bz}\lim_{R \to \infty} \prod_{|\lambda_k|<R}\left(1-\frac{z}{\lambda_k}\right)
\end{equation*}
where $b,c\in \mathbb{R}$, $m\in \mathbb{N}\cup\{0\}$ and $(\lambda_k)_{k \in \mathbb{Z}}$ are the non-zero roots of $f$.  We now apply this to $\tilde{\boldsymbol{\xi}}^{\beta,\delta}$. It is easy to see that, since $\tilde{\boldsymbol{\xi}}^{\beta,\delta}(0)=1$, and $\tilde{\boldsymbol{\xi}}^{\beta,\delta}$ maps the real line to the real line, see \cite{LiValko}, and has real roots, we must have $m,b=0$ and $c=1$, which gives the proof for pointwise equality. Now, by construction of $\tilde{\boldsymbol{\xi}}^{\beta,\delta}$ as the secular function of the random Dirac operator associated to the $\mathcal{HP}_{\beta,\delta}$ point process, see \cite{LiValko}, we have
\begin{equation*}
\tilde{\boldsymbol{\xi}}^{\beta,\delta}(z)=\mathrm{e}^{-\boldsymbol{\gamma}z}\prod_{\mathsf{x}\in \mathcal{HP}_{\beta,\delta}}\left(1-\frac{z}{\mathsf{x}}\right)\mathrm{e}^{z/\mathsf{x}},
\end{equation*}
the convergence being uniform on compact sets $z\in \mathbb{C}$, for some random variable $\boldsymbol{\gamma}$ ($\boldsymbol{\gamma}$ has a meaning in terms of random Dirac operators, see \cite{ValkoVirag,LiValko}, but we will not need this). We now determine $\boldsymbol{\gamma}$. Observe that we have, uniformly on compact sets in $z\in \mathbb{C}$,
\begin{equation*}
\tilde{\boldsymbol{\xi}}^{\beta,\delta}(z)=\lim_{R \to \infty}\exp\left(z\left(-\boldsymbol{\gamma}+\sum_{\substack{\mathsf{x} \in \mathcal{HP}_{\beta,\delta} \\ |\mathsf{x}|<R}}1/\mathsf{x}\right)\right)\prod_{\substack{\mathsf{x} \in \mathcal{HP}_{\beta,\delta} \\ |\mathsf{x}|<R}}\left(1-\frac{z}{\mathsf{x}}\right).
\end{equation*}
By taking $z \in \mathbb{C}$ with $z\neq 0$ and $\tilde{\boldsymbol{\xi}}^{\beta,\delta}(z)\neq 0$, we obtain (using the pointwise equality and the fact that the roots of 
$\tilde{\boldsymbol{\xi}}^{\beta,\delta}$ are given by the point process $ \mathcal{HP}_{\beta,\delta}$) that almost surely
\begin{equation*}
\boldsymbol{\gamma}=\lim_{R\to \infty}\sum_{\substack{\mathsf{x} \in \mathcal{HP}_{\beta,\delta} \\ |\mathsf{x}|<R}}\frac{1}{\mathsf{x}}.
\end{equation*}
Putting everything together completes the proof.
\end{proof}

\begin{rmk}
Observe that, the proof of Proposition \ref{PropPVRep} above in fact shows almost sure equality between $\tilde{\boldsymbol{\xi}}^{\beta,\delta}$ and $\boldsymbol{\xi}^{\beta,\delta}$ in the coupling afforded by Proposition \ref{LiValkoProp}.
\end{rmk}

\begin{prop}\label{CartwrightProp}
For any $\beta, \delta>0$, almost surely
\begin{equation*}
\tilde{\boldsymbol{\xi}}^{\beta,\delta}\in \mathsf{CC}.
\end{equation*}
\end{prop}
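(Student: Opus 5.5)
Two things have to be checked for $\tilde{\boldsymbol{\xi}}^{\beta,\delta}$: that it has exponential type, i.e. $|f(z)|\le \Delta^{1+|z|}$, and that the logarithmic integral $\int_{\mathbb{R}}\log_+|f(x)|/(1+x^2)\,\mathrm{d}x$ is finite. I treat them separately. For both I would work on the finite-$N$ side and pass to the limit with Proposition \ref{LiValkoProp}, since there $\mathfrak{q}^{\beta,\delta}_N(\mathrm{e}^{\mathrm{i}z/N})\mathrm{e}^{-\mathrm{i}z/2}$ is an explicit polynomial.

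\emph{Exponential type.} For $|w|\ge 1$ we have $|w-\mathrm{e}^{\mathrm{i}\theta_j}|\le |w|+1$, and for $|w|\le1$ we have $|w-\mathrm{e}^{\mathrm{i}\theta_j}|\le 2$. Taking $w=\mathrm{e}^{\mathrm{i}z/N}$ gives $|w|\le \mathrm{e}^{|z|/N}$, hence
\begin{equation*}
\left|\mathfrak{q}^{\beta,\delta}_N(\mathrm{e}^{\mathrm{i}z/N})\right|\le \prod_{j=1}^N\frac{1+\mathrm{e}^{|z|/N}}{|1-\mathrm{e}^{\mathrm{i}\theta_j}|}.
\end{equation*}
This bound is useless as it stands, because the denominators are not controlled. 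A better route is to use the Li--Valk\'o description of $\tilde{\boldsymbol{\xi}}^{\beta,\delta}$ through its power series with Brownian iterated-integral coefficients, or through the Dirac operator structure. In that description $\tilde{\boldsymbol{\xi}}^{\beta,\delta}(z)$ is the value at time $0$ of a solution of a linear ODE/SDE in $t$ whose generator is $z$ times a bounded-integrability factor $\frac{\beta}{4}\mathrm{e}^{\beta t/4}$ (compare \eqref{SineSDE}). Grönwall's inequality then gives $|\tilde{\boldsymbol{\xi}}^{\beta,\delta}(z)|\le C\exp(c|z|)$ with a random $C$. Concretely, I would show that the Taylor coefficients $a_n$ satisfy $|a_n|\le C_\omega \Delta^n/n!$ almost surely. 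The natural way is to compute a moment bound $\mathbb{E}|a_n|^2\le K^n/(n!)^2$ from the iterated-integral representation and then apply Borel--Cantelli.

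\emph{Logarithmic integral.} I would prove the integral is finite in expectation. By Jensen's inequality, $\mathbb{E}\log_+|\tilde{\boldsymbol{\xi}}^{\beta,\delta}(x)|\le \frac{1}{p}\log\bigl(1+\mathbb{E}|\tilde{\boldsymbol{\xi}}^{\beta,\delta}(x)|^p\bigr)$ for some small $p>0$. It is therefore enough to show that $\mathbb{E}|\tilde{\boldsymbol{\xi}}^{\beta,\delta}(x)|^p$ grows at most polynomially in $|x|$, since $\log(1+|x|^A)/(1+x^2)$ is integrable. To get that growth bound, I would use Fatou's lemma together with Proposition \ref{LiValkoProp}:
\begin{equation*}
\mathbb{E}|\tilde{\boldsymbol{\xi}}^{\beta,\delta}(x)|^p\le \liminf_N \mathbb{E}\left|\mathfrak{q}^{\beta,\delta}_N(\mathrm{e}^{\mathrm{i}x/N})\right|^p.
\end{equation*}
By the change of measure used in Propositions \ref{MoMrep} and \ref{CbetaECorr}, the right-hand side equals a ratio of $\mathrm{C}\beta\mathrm{E}_N$ joint moments:
\begin{equation*}
\frac{\mathbb{E}_{\mathrm{C}\beta\mathrm{E}_N}\bigl[|\mathsf{X}_N(1)|^{2\delta-p}|\mathsf{X}_N(\mathrm{e}^{\mathrm{i}x/N})|^{p}\bigr]}{\mathbb{E}_{\mathrm{C}\beta\mathrm{E}_N}\bigl[|\mathsf{X}_N(1)|^{2\delta}\bigr]}.
\end{equation*}
I take $0<p<2\delta$ so that both exponents are nonnegative. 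The uniform joint-moment bound (Proposition \ref{FinalBoundProp}, derived from Theorem \ref{MainBound}) bounds the numerator by
\begin{equation*}
C\,N^{\frac{2}{\beta}((\delta-p/2)^2+p^2/4)}\min(N,N/|x|)^{\frac{4}{\beta}(\delta-p/2)p/2}.
\end{equation*}
Lemma \ref{AsymptoticsPartition} gives that the denominator is of order $N^{2\delta^2/\beta}$. Since $(\delta-p/2)^2+p^2/4+(\delta-p/2)p=\delta^2$, the powers of $N$ cancel. What remains is at most $C$ when $|x|\le1$ and $C|x|^{-(2\delta-p)p/\beta}$ when $|x|>1$, which is even bounded. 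So $\mathbb{E}\int\log_+|\tilde{\boldsymbol{\xi}}^{\beta,\delta}(x)|(1+x^2)^{-1}\,\mathrm{d}x<\infty$, and the integral is almost surely finite.

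The main obstacle is the exponential-type bound. If the main bound is allowed (the paper proves it independently later), the Jensen--Fatou argument handles the log-integral cleanly. Almost-sure exponential type needs genuine control of growth in the complex directions, where the finite-$N$ polynomials grow like $\mathrm{e}^{|\Im z|/2}$ times fluctuations. I would try the Li--Valk\'o iterated-integral coefficient moments first. As a fallback, on the line $\Im z=\pm1$ one can use $|\mathrm{e}^{\mathrm{i}z/N}|^{-1}$ symmetry of the polynomial together with Phragmén--Lindelöf, which reduces exponential type to polynomial bounds on two horizontal lines.
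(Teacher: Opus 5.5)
\textbf{The logarithmic integral.} Your argument here is correct and is essentially the paper's. Fatou, the change of measure and the uniform moment bound give $\sup_{x\in\mathbb{R}}\mathbb{E}|\tilde{\boldsymbol{\xi}}^{\beta,\delta}(x)|^p<\infty$ for $0<p\le 2\delta$. One small correction: Theorem \ref{MainBound} is deduced from Proposition \ref{FinalBoundProp}, not the other way round.

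\textbf{The gap: exponential type.} The bound $|\tilde{\boldsymbol{\xi}}^{\beta,\delta}(z)|\le\Delta^{1+|z|}$ is not proved. You name it as the main obstacle, but none of your three routes is carried out, and as stated none works.
\begin{itemize}
\item \emph{Taylor coefficients.} The bound $\mathbb{E}|a_n|^2\le K^n/(n!)^2$ fails in general, because the coefficients are heavy-tailed. The value $\tilde{\boldsymbol{\xi}}^{\beta,\delta}(0)=1$ is pinned, while zeros can approach the origin, where the one-point density only vanishes like $|x|^{2\delta}$. Already at finite $N$, the coefficient of $z$ in $\mathfrak{q}_N^{\beta,\delta}(\mathrm{e}^{\mathrm{i}z/N})\mathrm{e}^{-\mathrm{i}z/2}$ is $-\frac{1}{2N}\sum_j\cot(\theta_j/2)$. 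This has infinite second moment when $\delta\le 1/2$. You would at least need fractional moments, and a computation from the iterated-integral representation that you have not done.
\item \emph{Gr\"onwall.} This needs a linear equation satisfied by $\tilde{\boldsymbol{\xi}}^{\beta,\delta}$ itself, with a $t$-integrable coefficient in front of $z$. The factor $\frac{\beta}{4}\mathrm{e}^{\beta t/4}$ is the drift of the phase SDE \eqref{SineSDE} for $\mathsf{p}_\lambda$, not such a coefficient. You have not exhibited the required equation.
\item \emph{Phragm\'en--Lindel\"of.} This fallback is circular. In the half-planes $\pm\Im z\ge 1$ it requires an a priori bound of exponential type there, which is exactly what is to be shown. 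Moreover, you only have pointwise moment bounds on the two lines, not almost sure polynomial bounds.
\end{itemize}

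\textbf{What the paper does instead.} The missing idea is to turn moment bounds into an almost sure growth bound in the whole plane; the paper needs no structural input from Li--Valk\'o for this. Fix $0<s\le 2\delta$.
\begin{itemize}
\item By Fatou, $\mathbb{E}|\tilde{\boldsymbol{\xi}}^{\beta,\delta}(x+\mathrm{i}y)|^s\le \mathrm{e}^{ys/2}\liminf_N\mathbb{E}|\mathfrak{q}_N^{\beta,\delta}(\mathrm{e}^{-y/N}\mathrm{e}^{\mathrm{i}x/N})|^s$.
\item Inside the unit disc, subharmonicity of $|\mathfrak{q}_N^{\beta,\delta}|^s$ and the Poisson kernel bound this expectation by an average of $\mathbb{E}|\mathfrak{q}_N^{\beta,\delta}(\mathrm{e}^{\mathrm{i}t})|^s$, which is at most $C$ by Theorem \ref{MainBound}.
\item Outside the disc, the reflection identity $|\mathfrak{q}_N^{\beta,\delta}(r\mathrm{e}^{\mathrm{i}\theta})|\overset{\mathrm{d}}{=}r^N|\mathfrak{q}_N^{\beta,\delta}(r^{-1}\mathrm{e}^{-\mathrm{i}\theta})|$ gives the bound $Cr^{Ns}$.
\item Together these give $\mathbb{E}|\tilde{\boldsymbol{\xi}}^{\beta,\delta}(z)|^s\le C\mathrm{e}^{s|z|/2}$ on all of $\mathbb{C}$.
\item Subharmonicity of $|\tilde{\boldsymbol{\xi}}^{\beta,\delta}|^s$ (maximum principle, then Poisson's inequality on the circle of radius $k+1$) gives $\mathbb{E}\max_{|z|\le k}|\tilde{\boldsymbol{\xi}}^{\beta,\delta}(z)|^s\le C(k+1)^2\mathrm{e}^{s(k+1)/2}$.
\item Markov's inequality and Borel--Cantelli over $k\in\mathbb{N}$ then give $\max_{|z|\le k}|\tilde{\boldsymbol{\xi}}^{\beta,\delta}(z)|\le 2^{\mathsf{c}k}$ for all large $k$, once $\mathsf{c}>1/(2\log 2)$.
\end{itemize}
Your first finite-$N$ attempt stalled because you bounded $|\mathfrak{q}_N^{\beta,\delta}|$ pathwise. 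In $L^s$, the denominators $|1-\mathrm{e}^{\mathrm{i}\theta_j}|$ are exactly what the change of measure and Theorem \ref{MainBound} control.
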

We will prove this proposition later in this section.

We now arrive to the following sharp bound which is the main technical contribution of our paper. It will be proven in Section \ref{SectionMainBound} using random orthogonal polynomials on the unit circle.
\begin{thm}\label{MainBound}
Let $\beta,\delta>0$, $\ell \in \mathbb{N}$ and $(x_1,\dots,x_\ell) \in \mathbb{R}^\ell$. Suppose $r_1,\dots,r_\ell \in \mathbb{R}_+$ are such that $\sum_{j=1}^\ell r_j\le2\delta$. Then,
\begin{equation}
\mathbb{E}\left[\left|\mathfrak{q}_N^{\beta,\delta}(\mathrm{e}^{\mathrm{i}x_1/N})\right|^{r_1}\cdots \left|\mathfrak{q}_N^{\beta,\delta}(\mathrm{e}^{\mathrm{i}x_\ell/N})\right|^{r_\ell}\right]\le C \prod_{j=1}^\ell \frac{1}{1+|x_j|^{(2\delta-\sum_{m=1}^\ell r_m)r_j/\beta}} \prod_{1\le i < j\le \ell} \frac{1}{1+|x_i-x_j|^{r_ir_j/\beta}},
\end{equation}
where the constant $C > 0$ depends only on 
$\beta$, $\delta$, $\ell$ and $(r_m)_{1 \leq m \leq \ell}$, but not on $N$ and $(x_m)_{1 \leq m \leq \ell}$. 
\end{thm}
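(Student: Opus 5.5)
We reduce Theorem \ref{MainBound} to a joint moment bound for the C$\beta$E characteristic polynomial and then prove that bound using the Killip--Nenciu Verblunsky coefficient representation.

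\textbf{Reduction.} Undoing the change of measure used in Propositions \ref{MoMrep} and \ref{CbetaECorr}, the left-hand side equals
\begin{equation*}
\frac{\mathbb{E}_{\textnormal{C}\beta \textnormal{E}_N}\left[|\mathsf{X}_N(1)|^{2\delta-\sum_j r_j}\prod_{j=1}^\ell |\mathsf{X}_N(\mathrm{e}^{\mathrm{i}x_j/N})|^{r_j}\right]}{\mathbb{E}_{\textnormal{C}\beta \textnormal{E}_N}\left[|\mathsf{X}_N(1)|^{2\delta}\right]}.
\end{equation*}
By Lemma \ref{AsymptoticsPartition}, the denominator is comparable to $N^{2\delta^2/\beta}$.

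It therefore suffices to prove the following uniform bound, which is the statement announced as Proposition \ref{FinalBoundProp}. For $\ell+1$ points $\theta_0=0,\theta_j=x_j/N$ and exponents $s_0=\delta-\sum r_j/2\ge 0$, $s_j=r_j/2$,
\begin{equation*}
\mathbb{E}_{\textnormal{C}\beta \textnormal{E}_N}\Big[\prod_j |\mathsf{X}_N(\mathrm{e}^{\mathrm{i}\theta_j})|^{2s_j}\Big]\le C N^{\frac{2}{\beta}\sum s_j^2}\prod_{j<m}\min\big(N,|2\sin\tfrac{\theta_j-\theta_m}{2}|^{-1}\big)^{\frac{4}{\beta}s_js_m}.
\end{equation*}
Indeed, since $\sum_j s_j=\delta$, the exponents of $N$ combine to exactly $2\delta^2/\beta$. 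The factors $\min(N,N/|x_j|)^{4s_0s_j/\beta}$ produce $(1+|x_j|)^{-(2\delta-\sum r)r_j/\beta}$, and the factors $\min(N,N/|x_i-x_j|)^{r_ir_j/\beta}$ produce $(1+|x_i-x_j|)^{-r_ir_j/\beta}$. One point needs care: $x_j$ is real, but the chordal distance $|2\sin(x/2N)|$ is only comparable to $|x|/N$ for $|x|\lesssim N$. Since $\mathfrak{q}$ is $2\pi N$-periodic in $x$, we have to check that for $|x|\gtrsim N$ the stated decay still holds.

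\emph{I would verify this periodicity issue first.} If the stated decay genuinely fails for large $|x|$, the reduction will instead be to $|x_j|\le \pi N$, using $\min$-type bounds there.

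\textbf{Proof of the joint moment bound via OPUC.} By Killip--Nenciu, $\mathsf{X}_N(z)$ is, up to a unimodular factor, $\Phi_N^*(z)$ plus a boundary term. Here $\Phi_k$ are the orthogonal polynomials with independent rotation-invariant Verblunsky coefficients $\alpha_k$, where $|\alpha_k|^2\sim\mathrm{Beta}(1,\tfrac{\beta}{2}(N-k-1))$ and the last coefficient is uniform on the circle. Szeg\H{o} recursion gives
\begin{equation*}
\log\Phi_{k+1}^*(\mathrm{e}^{\mathrm{i}\theta})=\log\Phi_k^*(\mathrm{e}^{\mathrm{i}\theta})+\log\big(1-\alpha_k \mathrm{e}^{\mathrm{i}\psi_k(\theta)}\big),
\end{equation*}
with Pr\"ufer phases $\psi_k(\theta)=(k+1)\theta+\text{(relative phase)}$. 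Taking the filtration $\mathcal{F}_k=\sigma(\alpha_0,\dots,\alpha_{k-1})$, we compute the conditional expectation of $\prod_j|1-\alpha_k\mathrm{e}^{\mathrm{i}\psi_k(\theta_j)}|^{2s_j}$ given $\mathcal{F}_k$. Expanding in $\alpha_k$ and using rotation invariance, this equals
\begin{equation*}
1+\frac{2}{\beta(N-k)}\Big(\sum_j s_j^2+2\sum_{j<m}s_js_m\cos(\psi_k(\theta_j)-\psi_k(\theta_m))\Big)+O\big((N-k)^{-3/2}\big).
\end{equation*}
The error is controlled by Beta-moment bounds; the $O((N-k)^{-1})$ second-order terms integrate to the $(N-k)^{-3/2}$ error after averaging the odd orders.

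The diagonal terms multiply to $N^{\frac{2}{\beta}\sum s_j^2}$. For the cross terms we split times at $k^*\approx N-1/|\theta_j-\theta_m|$:
\begin{itemize}
\item For $N-k\gtrsim 1/|\theta_j-\theta_m|$, the relative phase $\psi_k(\theta_j)-\psi_k(\theta_m)$ is small, so $\cos\approx 1$. This contributes $\prod_k(1+\frac{4s_js_m}{\beta(N-k)})\approx (N|\theta_j-\theta_m|)^{0}\cdot\min(N,|\theta|^{-1})^{4s_js_m/\beta}$.
\item For $N-k\lesssim 1/|\theta_j-\theta_m|$ (closer to the end), the relative phase rotates with drift $\theta_j-\theta_m$ per step, so the cosines oscillate and their sum is $O(1)$.
\end{itemize}

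\textbf{Main obstacle.} The hardest step is making the oscillation rigorous uniformly. The relative Pr\"ufer phase is a random walk with drift plus noise depending on the same $\alpha$'s, and the cross terms appear inside a product of conditional expectations. The expectations do not factorise, because the weights $|\Phi_k^*|^{2s}$ tilt the law of the phases.

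I would handle this by a change of measure at each scale. Define the tilted measure with density $\prod_j|\Phi_k^*(\mathrm{e}^{\mathrm{i}\theta_j})|^{2s_j}/\mathbb{E}[\cdot]$, under which the $\alpha_k$ acquire an explicit drift of order $(N-k)^{-1}$. Under this measure, an Abel summation/martingale estimate shows that
\begin{equation*}
\sum_{k}\frac{\cos(\psi_k(\theta_j)-\psi_k(\theta_m))}{N-k}
\end{equation*}
has exponential moments bounded uniformly over the oscillating window. This is the branching-type multiscale control alluded to in the introduction. Hyperbolic-geometry bounds of Chhaibi--Madaule--Najnudel type on the relative phase would supply the needed concentration. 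Finally, the boundary (uniform last coefficient) term is bounded by $2^{2\sum s_j}$.
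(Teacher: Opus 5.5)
Your reduction is the paper's. You undo the change of measure, set $s_0=\delta-\frac12\sum_j r_j$ and $s_j=r_j/2$, use $|\mathsf{X}_N|\le 2|\Phi^*_{N-1}|$ and Lemma \ref{AsymptoticsPartition}, and then appeal to a bound of the type of Proposition \ref{FinalBoundProp}.

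Your periodicity worry is also justified. For $\ell=1$ and $x_1=2\pi N$ the left side is $\mathbb{E}|\mathfrak{q}_N^{\beta,\delta}(1)|^{r_1}=1$, while the right side tends to $0$ when $0<r_1<2\delta$. Restricting to $[-\pi N,\pi N]^\ell$ is not enough either: the pair factors must use $x_i-x_j$ modulo $2\pi N$, as the choice $x_1=\pi N=-x_2$, $r_1=r_2=\delta$ shows. The paper's closing ``elementary inequality'' tacitly needs $|x_j-x_k|\le\pi N$ as well.

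\textbf{The gap: your two regimes are inverted.} In the Killip--Nenciu ordering you chose, $\mathbb{E}|\alpha_k|^2\approx\frac{2}{\beta(N-k)}$ and the relative phase has drift $(k+1)(\theta_j-\theta_m)$. Write $\Delta=\|\theta_j-\theta_m\|$.

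On $\{N-k\gtrsim\Delta^{-1}\}$ this drift rotates through many turns. It is small only for $k\lesssim\Delta^{-1}$, where the weights are $\approx 1/N$. So ``$\cos\approx 1$'' is false there. Your own product $\prod_{N-k\ge\Delta^{-1}}\bigl(1+\frac{4s_js_m}{\beta(N-k)}\bigr)$ is $\asymp (N\Delta)^{4s_js_m/\beta}$, not $\min(N,\Delta^{-1})^{4s_js_m/\beta}$; the factor ``$(N\Delta)^0$'' is an arithmetic slip. For $\Delta\asymp 1$ this gives $N^{4s_js_m/\beta}$, no better than bounding every cosine by $1$.

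Conversely, in the last $\Delta^{-1}$ steps the drift moves the phase by at most one radian, so nothing oscillates. This window has weight $\sum_{n\le\Delta^{-1}}\frac1n\approx\log(1/\Delta)$. It is exactly where the factor $\Delta^{-4s_js_m/\beta}$ comes from, via $\cos\le 1$. Cancellation is available instead in the first $N-\Delta^{-1}$ steps, where $1/(N-k)$ is nearly constant over a period $2\pi/\Delta$.

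So you must either swap the regimes, or do what the paper does following Chhaibi--Madaule--Najnudel. The paper uses the reversed representation with $|\boldsymbol{\alpha}_j|^2\sim\mathrm{Beta}(1,\frac{\beta}{2}(j+1))$, in which $\Phi^*_j$ resolves scale $1/j$. Points at distance $\Delta$ are merged for $j\lesssim\Delta^{-1}$ and decorrelate by oscillation for $j\gtrsim\Delta^{-1}$.

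\textbf{The unresolved main step.} Beyond the inversion, your ``main obstacle'' is left as a plan (tilted measures, Abel summation, hyperbolic geometry). You also do not say how pairwise splits at different times combine inside one product of conditional expectations. The paper needs no tilting:
\begin{itemize}
\item On a block $[j,j+\ell]$ with $\ell\le j$, it replaces $\Psi_{j+r-1}(\theta_m)$ by the $\mathcal{F}_j$-measurable quantity $\Psi_j(\theta_m)+(r-1)\theta_m$.
\item The error is paid for by Cauchy--Schwarz against a crude moment bound with doubled exponents, together with the sub-Gaussian estimate of Proposition \ref{boundlargefluctuationpsi}.
\item After this replacement the cross terms are geometric sums bounded by $\min(\ell,\mu^{-1})$.
\item Splitting $[j,2j]$ into $q\approx (j\mu)^{2/3}(\log j\mu)^{1/3}$ blocks makes the extra cost $K(\log(1+j\mu)/(j\mu))^{1/3}$, which is summable over dyadic scales above $\mu^{-1}$.
\item Below $\mu^{-1}$, the closest pair is merged into one point with exponent $s_{p-1}+s_p$, and induction on the number of points handles the hierarchy of scales.
\end{itemize}
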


The following corollary is immediate from Theorem \ref{MainBound} using Proposition \ref{LiValkoProp} and Fatou's lemma.

\begin{cor}\label{BoundLimitingEntire}
Let $\beta,\delta>0$. For $r_1,\dots,r_\ell \in \mathbb{R}_+$ such that $\sum_{j=1}^\ell r_j \le2\delta$ we have,
\begin{equation}
\mathbb{E}\left[\left|\tilde{\boldsymbol{\xi}}^{\beta,\delta}(x_1)\right|^{r_1}\cdots \left|\tilde{\boldsymbol{\xi}}^{\beta,\delta}(x_\ell)\right|^{r_\ell}\right] \le C \prod_{j=1}^\ell \frac{1}{1+|x_j|^{(2\delta-\sum_{m=1}^\ell r_m)r_j/\beta}} \prod_{1\le i < j\le \ell} \frac{1}{1+|x_i-x_j|^{r_ir_j/\beta}},
\end{equation}
 where the constant $C > 0$ depends only on 
$\beta$, $\delta$, $\ell$ and $(r_m)_{1 \leq m \leq \ell}$. 
\end{cor}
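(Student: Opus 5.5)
The plan is to pass to the limit $N\to\infty$ in Theorem \ref{MainBound}, using the almost sure convergence of Proposition \ref{LiValkoProp} and Fatou's lemma. Fix $\beta,\delta>0$, exponents $r_1,\dots,r_\ell\in\mathbb{R}_+$ with $\sum_j r_j\le 2\delta$, and points $x_1,\dots,x_\ell\in\mathbb{R}$. Work on the probability space carrying the coupling of Proposition \ref{LiValkoProp}. On it, almost surely
\begin{equation*}
\mathfrak{q}^{\beta,\delta}_N(\mathrm{e}^{\mathrm{i}z/N})\mathrm{e}^{-\mathrm{i}z/2}\longrightarrow\tilde{\boldsymbol{\xi}}^{\beta,\delta}(z)
\end{equation*}
uniformly on compacts, and in particular pointwise at each real $x_j$. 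For real $x_j$ we have $|\mathrm{e}^{-\mathrm{i}x_j/2}|=1$, so the phase factor plays no role in the moduli. Hence, almost surely,
\begin{equation*}
\prod_{j=1}^\ell \left|\mathfrak{q}^{\beta,\delta}_N(\mathrm{e}^{\mathrm{i}x_j/N})\right|^{r_j}\longrightarrow \prod_{j=1}^\ell \left|\tilde{\boldsymbol{\xi}}^{\beta,\delta}(x_j)\right|^{r_j}.
\end{equation*}
This uses the continuity of $w\mapsto |w|^{r}$ for $r\ge 0$, with the convention $|0|^0=1$ for any $r_j=0$.

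Since these random variables are nonnegative, Fatou's lemma gives
\begin{equation*}
\mathbb{E}\left[\prod_{j=1}^\ell \left|\tilde{\boldsymbol{\xi}}^{\beta,\delta}(x_j)\right|^{r_j}\right]\le \liminf_{N\to\infty}\mathbb{E}\left[\prod_{j=1}^\ell \left|\mathfrak{q}^{\beta,\delta}_N(\mathrm{e}^{\mathrm{i}x_j/N})\right|^{r_j}\right].
\end{equation*}
The expectations on the right are computed under the laws $\mathcal{CJ}_{N,\beta,\delta}$, which are the marginals in the coupling. Theorem \ref{MainBound} bounds each of them by the same right-hand side, with a constant $C$ independent of $N$ and of the $x_j$. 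That bound therefore survives the $\liminf$, and it is exactly the claimed inequality, with the same $C$ depending only on $\beta,\delta,\ell,(r_m)$.

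There is essentially no obstacle here; the real content is Theorem \ref{MainBound}. The one point to check is that Fatou applies to the limit object in the coupling. This holds because the coupling preserves the law of $\tilde{\boldsymbol{\xi}}^{\beta,\delta}$ and the law of each $\mathfrak{q}_N^{\beta,\delta}$, so the expectations on both sides are intrinsic and do not depend on the coupling used to compare them.
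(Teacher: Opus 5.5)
Your proposal is correct and follows exactly the paper's route: the paper states that the corollary is immediate from Theorem \ref{MainBound} via the almost sure convergence in the coupling of Proposition \ref{LiValkoProp} together with Fatou's lemma. Your extra remarks, that the phase factor $\mathrm{e}^{-\mathrm{i}x_j/2}$ has modulus one for real $x_j$ and that the uniform constant $C$ passes through the $\liminf$, are precisely the details this one-line argument leaves implicit.
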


We now prove a number of intermediate results needed to prove Theorem \ref{MainThm2}.

\begin{lem}\label{NormCorrAsymptotics}
For $\beta>0$, $N, m\in \mathbb{N}$, with $N\ge m$, we have the following asymptotics as $N \to \infty$
\begin{align*}
 \frac{N!}{(N-m)!}\frac{\mathcal{Z}_{N-m,\beta}}{\mathcal{Z}_{N,\beta}}&\mathbb{E}_{\textnormal{C}\beta \textnormal{E}_{N-m}}\left[\left|\mathsf{X}_{N-m}(1)\right|^{m\beta}\right]\\&\sim \frac{2^{m\left(\frac{\beta}{2}-1\right)}}{\pi^m\beta^{\beta m/2}} \mathrm{e}^{\mathcal{Y}_\beta\left(1+\beta(m-\frac{1}{2})\right)-2\mathcal{Y}_\beta\left(1+\frac{\beta}{2}(m-1)\right)+\mathcal{Y}_\beta\left(1-\frac{\beta}{2}\right)} N^{\frac{\beta m}{2}(m-1)+m}.
\end{align*}
\end{lem}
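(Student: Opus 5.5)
The lemma follows from splitting the left-hand side into two factors and computing the asymptotics of each separately. For the expectation, apply Lemma \ref{AsymptoticsPartition} with $r=m\beta/2$, so that $2r^2/\beta = m^2\beta/2$. The substituted arguments are
\[
1+2r-\beta/2 = 1+\beta(m-\tfrac12), \qquad 1+r-\beta/2 = 1+\tfrac{\beta}{2}(m-1).
\]
Replacing $N$ by $N-m$, which is harmless since $(N-m)^{c}\sim N^{c}$, this gives
\[
\mathbb{E}_{\textnormal{C}\beta \textnormal{E}_{N-m}}\left[\left|\mathsf{X}_{N-m}(1)\right|^{m\beta}\right]\sim \mathrm{e}^{\mathcal{Y}_\beta\left(1+\beta(m-\frac{1}{2})\right)-2\mathcal{Y}_\beta\left(1+\frac{\beta}{2}(m-1)\right)+\mathcal{Y}_\beta\left(1-\frac{\beta}{2}\right)} N^{m^2\beta/2}.
\]
This already produces the exponential factor in the statement.

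The remaining factor is purely explicit. Using the formula for $\mathcal{Z}_{N,\beta}$,
\[
\frac{N!}{(N-m)!}\frac{\mathcal{Z}_{N-m,\beta}}{\mathcal{Z}_{N,\beta}}=\frac{N!}{(N-m)!}(2\pi)^{-m}\frac{\Gamma\left(\frac{\beta}{2}(N-m)+1\right)}{\Gamma\left(\frac{\beta}{2}N+1\right)}.
\]
We have $N!/(N-m)!\sim N^m$. By the standard asymptotics $\Gamma(x+a)/\Gamma(x+b)\sim x^{a-b}$, here with $x=\beta N/2$, the Gamma ratio is asymptotic to $(\beta N/2)^{-\beta m/2}$. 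The whole prefactor is therefore
\[
(2\pi)^{-m}\left(\frac{2}{\beta}\right)^{\beta m/2} N^{m-\beta m/2} = \frac{2^{m(\frac{\beta}{2}-1)}}{\pi^m\beta^{\beta m/2}}N^{m-\beta m/2}.
\]

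Multiplying the two asymptotics, the total power of $N$ is $m^2\beta/2+m-\beta m/2=\frac{\beta m}{2}(m-1)+m$, as claimed. The only step needing any care is checking that the arguments of $\mathcal{Y}_\beta$ match exactly after substituting $r=m\beta/2$, together with the harmless shift $N\to N-m$ in Lemma \ref{AsymptoticsPartition}. I do not expect a genuine obstacle anywhere in the argument.
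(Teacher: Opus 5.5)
Your computation is the paper's proof written out; the paper's proof is the single remark that the lemma follows by direct computation from Lemma \ref{AsymptoticsPartition} and Gamma-function asymptotics. The substitution $r=m\beta/2$, the shift $N\to N-m$, the estimate $\Gamma(\frac{\beta}{2}(N-m)+1)/\Gamma(\frac{\beta}{2}N+1)\sim(\beta N/2)^{-\beta m/2}$, and the bookkeeping of the powers of $2$, $\pi$, $\beta$ and $N$ are all correct, given the formula for $\mathcal{Z}_{N,\beta}$ displayed in the introduction.

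That displayed formula, which you and the paper both use, is missing an exponent. The true normalising constant (Dyson's constant) is $\mathcal{Z}_{N,\beta}=(2\pi)^N\Gamma(\frac{\beta N}{2}+1)/\Gamma(\frac{\beta}{2}+1)^N$. This is visible already at $N=2$, where $\int_{[-\pi,\pi)^2}|\mathrm{e}^{\mathrm{i}\theta_1}-\mathrm{e}^{\mathrm{i}\theta_2}|^{\beta}\mathrm{d}\theta_1\mathrm{d}\theta_2=(2\pi)^2\Gamma(1+\beta)/\Gamma(1+\frac{\beta}{2})^2$.

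The ratio $\mathcal{Z}_{N-m,\beta}/\mathcal{Z}_{N,\beta}$ produced in Proposition \ref{CbetaECorr} is the genuine one. With the correct normalisation, your prefactor therefore picks up an extra factor $\Gamma(1+\frac{\beta}{2})^m$. The constant in the lemma, and hence $\mathfrak{C}^{(m)}_\beta$ in Theorem \ref{MainThm2}, should be multiplied by this factor, which differs from $1$ for every $\beta\neq 2$.

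The case $m=1$ shows the discrepancy without any asymptotics. By rotation invariance the left-hand side is $\boldsymbol{\rho}^{(1)}_{N,\beta}=N/(2\pi)$. The Morris product $\prod_{j=0}^{N-2}\Gamma(1+\frac{j\beta}{2})\Gamma(1+\beta+\frac{j\beta}{2})/\Gamma(1+\frac{(j+1)\beta}{2})^2$ telescopes to the exact value $\mathbb{E}_{\textnormal{C}\beta\textnormal{E}_{N-1}}[|\mathsf{X}_{N-1}(1)|^{\beta}]=\Gamma(1+\frac{\beta N}{2})/\bigl(\Gamma(1+\frac{\beta}{2})\Gamma(1+\frac{\beta(N-1)}{2})\bigr)$. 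Combined with the displayed $\mathcal{Z}_{N,\beta}$, this gives $N/(2\pi\Gamma(1+\frac{\beta}{2}))$ instead of $N/(2\pi)$. So the factor you called purely explicit is exactly where a one-line sanity check was needed.
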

\begin{proof}
This is direct computation using Lemma \ref{AsymptoticsPartition} and standard asymptotics for Gamma functions.
\end{proof}

\begin{prop}\label{PropAsymptCorr}
Let $\beta>0$, $m\in \mathbb{N}$, $(x_1,\dots,x_m)\in \mathbb{R}^m$. We have as $N \to \infty$,
\begin{align*}
&\mathbb{E}\left[\prod_{j=2}^m \left|\mathfrak{q}_{N-m}^{\beta,m\beta/2}\left(\mathrm{e}^{\mathrm{i}\frac{1}{N}(x_j-x_1)}\right)\right|^{\beta}\right] \prod_{1\le j <k \le m}\left|\mathrm{e}^{\mathrm{i}\frac{x_j}{N}}-\mathrm{e}^{\mathrm{i}\frac{x_k}{N}}\right|^\beta \\
&\sim N^{-\binom{m}{2}\beta}\mathbb{E}\left[\prod_{j=2}^m\left|\tilde{\boldsymbol{\xi}}^{\beta,m \beta/2}(x_j-x_1)\right|^\beta\right] \prod_{1 \le j <k \le m}\left|x_j-x_k\right|^\beta.
\end{align*}
\end{prop}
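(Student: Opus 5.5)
The asymptotics naturally split into a deterministic factor and an expectation. The deterministic factor is elementary. Since $|\mathrm{e}^{\mathrm{i}a/N}-\mathrm{e}^{\mathrm{i}b/N}|=2|\sin((a-b)/(2N))|$, we have
\begin{equation*}
\prod_{1\le j<k\le m}\left|\mathrm{e}^{\mathrm{i}x_j/N}-\mathrm{e}^{\mathrm{i}x_k/N}\right|^\beta \sim N^{-\binom{m}{2}\beta}\prod_{1\le j<k\le m}|x_j-x_k|^\beta
\end{equation*}
whenever the $x_j$ are pairwise distinct. If two coordinates coincide, both sides vanish identically and the statement is read as an equality. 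It therefore suffices to show that the expectation converges to $\mathbb{E}[\prod_{j=2}^m|\tilde{\boldsymbol{\xi}}^{\beta,m\beta/2}(x_j-x_1)|^\beta]$. The plan is to use the almost sure convergence of Proposition \ref{LiValkoProp} together with uniform integrability coming from Theorem \ref{MainBound}.

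\textbf{Pointwise convergence.} Set $M=N-m$ and $y_j=(x_j-x_1)M/N$. Then
\begin{equation*}
\mathfrak{q}_{M}^{\beta,m\beta/2}\left(\mathrm{e}^{\mathrm{i}(x_j-x_1)/N}\right)=\mathfrak{q}_{M}^{\beta,m\beta/2}\left(\mathrm{e}^{\mathrm{i}y_j/M}\right).
\end{equation*}
The factor $\mathrm{e}^{-\mathrm{i}z/2}$ in Proposition \ref{LiValkoProp} has modulus one for real $z$, so it does not affect absolute values. In the coupling of Proposition \ref{LiValkoProp}, convergence holds uniformly on compacts, and $y_j\to x_j-x_1$. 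Hence $|\mathfrak{q}_M^{\beta,m\beta/2}(\mathrm{e}^{\mathrm{i}y_j/M})|\to|\tilde{\boldsymbol{\xi}}^{\beta,m\beta/2}(x_j-x_1)|$ almost surely for each $j$, and the product of $\beta$-th powers converges almost surely.

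\textbf{Uniform integrability.} It is enough to bound a moment of order $1+\epsilon$ of the product uniformly in $N$. Take $r_j=(1+\epsilon)\beta$ for $j=2,\dots,m$, with $\ell=m-1$ points. The constraint in Theorem \ref{MainBound} with $\delta=m\beta/2$ is $\sum_j r_j=(m-1)(1+\epsilon)\beta\le 2\delta=m\beta$, which holds for $\epsilon\le 1/(m-1)$. This slack, one spare factor of $\beta$, is exactly why $\delta=m\beta/2$ is used rather than $(m-1)\beta/2$. Every factor on the right-hand side of Theorem \ref{MainBound} is at most $1$, so the bound gives
\begin{equation*}
\sup_N\mathbb{E}\left[\prod_{j=2}^m\left|\mathfrak{q}_M^{\beta,m\beta/2}\left(\mathrm{e}^{\mathrm{i}y_j/M}\right)\right|^{(1+\epsilon)\beta}\right]\le C,
\end{equation*}
uniformly in the points $y_j$ and in $M$. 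Vitali's theorem, or dominated convergence via de la Vallée-Poussin, then upgrades the almost sure convergence to convergence of expectations. The joint distribution of the $\mathfrak{q}_M$ in the coupling is irrelevant here, because each expectation depends only on the law at the single index $M$.

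\textbf{Positivity of the limit.} To turn the convergence into the asymptotic equivalence $\sim$, the limiting expectation must be nonzero. This is the step I expect to need some care. At this stage it suffices to note that $\tilde{\boldsymbol{\xi}}^{\beta,m\beta/2}$ is a nonzero entire function whose zeros form the process $\mathcal{HP}_{\beta,\delta}$. For fixed real $u\neq0$, the probability that $u$ is a zero is $0$, because the process has an intensity and hence no fixed atoms. Thus the integrand is almost surely positive, and its expectation is strictly positive.

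If one prefers to avoid this point, the statement can be read as convergence of $N^{\binom{m}{2}\beta}$ times the left-hand side to the right-hand side, and the argument above proves exactly that. The strict positivity claimed in Theorem \ref{MainThm2} would in any case be handled separately later.
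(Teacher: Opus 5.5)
Your proposal is correct and follows the paper's proof. Both handle the Vandermonde factor as elementary and get convergence of the expectation from two ingredients: almost sure convergence in the Li--Valk\'o coupling, and uniform integrability from Theorem \ref{MainBound} with exponent $r\in(\beta,\frac{m}{m-1}\beta]$. Your extra care with the reindexing $y_j=(x_j-x_1)M/N$, and with strict positivity of the limit, fills in details the paper leaves implicit. The paper only proves that positivity later, in the proof of Theorem \ref{MainThm2}, using the same no-fixed-atoms argument: boundedness of the first correlation function of $\mathcal{HP}_{\beta,m\beta/2}$.
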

\begin{proof}
The asymptotics of the Vandermonde-type term are obvious. Thus, we only need to show that for any $(x_1,\dots,x_m)\in \mathbb{R}^m$, as $N \to \infty$,
\begin{equation*}
\mathbb{E}\left[\prod_{j=2}^m \left|\mathfrak{q}_{N-m}^{\beta,m\beta/2}\left(\mathrm{e}^{\mathrm{i}\frac{1}{N}(x_j-x_1)}\right)\right|^{\beta}\right] \longrightarrow \mathbb{E}\left[\prod_{j=2}^m\left|\tilde{\boldsymbol{\xi}}^{\beta,m \beta/2}(x_j-x_1)\right|^\beta\right].
\end{equation*}
For this we only need some uniform integrability, since by virtue of Proposition \ref{LiValkoProp} we have almost sure convergence (in the coupling of Proposition \ref{LiValkoProp}). It moreover suffices to have for some $r>\beta$, 
\begin{equation*}
\sup_{N\ge m+1}\mathbb{E}\left[\prod_{j=2}^m\left|\mathfrak{q}_{N-m}^{\beta,m\beta/2}\left(\mathrm{e}^{\mathrm{i}\frac{1}{N}(x_j-x_1)}\right)\right|^r\right]<\infty.
\end{equation*}
Picking $r$ so that $\beta<r\le\frac{m}{m-1}\beta$, this follows by virtue of the bound from Theorem \ref{MainBound}.
\end{proof}

\begin{prop}\label{LimitOfCorr}
Let $\beta>0$, $m \in \mathbb{N}$. For any continuous function $\mathsf{f}_m$ with compact support on $\mathbb{R}^m$, we have
\begin{align*}
&\int_{\mathbb{R}^m}\boldsymbol{\rho}_\beta^{(m)}(x_1,\dots,x_m)\mathsf{f}_m(x_1,\dots,x_m)\mathrm{d}x_1\cdots\mathrm{d}x_m\\&=\lim_{N\to \infty} \frac{1}{N^m}\int_{\mathbb{R}^m}\boldsymbol{\rho}_{N,\beta}^{(m)}\left(\frac{x_1}{N},\dots,\frac{x_m}{N}\right)\mathsf{f}_m(x_1,\dots,x_m)\mathrm{d}x_1\cdots\mathrm{d}x_m.
\end{align*}
\end{prop}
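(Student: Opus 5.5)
Since $\mathsf{f}_m$ is continuous with compact support, the integral $\int \boldsymbol{\rho}_\beta^{(m)}\mathsf{f}_m$ is the expectation of a linear statistic of the $m$-fold factorial power of $\mathsf{Sine}_\beta$. By the defining property of correlation functions (extended from products of indicators of disjoint sets to general functions in the standard way),
\begin{equation*}
\int_{\mathbb{R}^m}\boldsymbol{\rho}_\beta^{(m)}\mathsf{f}_m=\mathbb{E}\Big[\sum_{\substack{\lambda_1,\dots,\lambda_m\in \mathsf{Sine}_\beta\\ \text{distinct}}}\mathsf{f}_m(\lambda_1,\dots,\lambda_m)\Big].
\end{equation*}
Similarly, after the change of variables $\theta_j=x_j/N$, the right hand side equals
\begin{equation*}
\mathbb{E}\Big[\sum \mathsf{f}_m(N\theta_{j_1},\dots,N\theta_{j_m})\Big],
\end{equation*}
where the sum is over distinct indices, the points are $\textnormal{C}\beta\textnormal{E}_N$-distributed, and angles are taken in $[-\pi,\pi)$. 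So the proposition reduces to two facts: convergence in distribution of these factorial-moment statistics, and uniform integrability.

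\textbf{Step 1: convergence in law.} I use the result of Killip--Stoiciu and Valkó--Virág that the rescaled point process $N\cdot\{\theta_j\}$ of $\textnormal{C}\beta\textnormal{E}_N$ converges in distribution to $\mathsf{Sine}_\beta$, in the vague topology of locally finite counting measures. With the normalisation chosen in the paper (density $1/(2\pi)$), no extra constant appears. The factorial measure $\sum_{\text{distinct}}\delta_{(\lambda_{j_1},\dots,\lambda_{j_m})}$ is a continuous functional of a simple counting measure in the vague topology. Since $\mathsf{Sine}_\beta$ is almost surely simple, the continuous mapping theorem gives convergence in distribution of
\begin{equation*}
S_N=\sum_{\text{distinct}}\mathsf{f}_m(N\theta_{j_1},\dots,N\theta_{j_m})
\end{equation*}
to the corresponding statistic $S_\infty$ for $\mathsf{Sine}_\beta$.

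\textbf{Step 2: uniform integrability (the main obstacle).} Let $\operatorname{supp}\mathsf{f}_m\subset[-A,A]^m$. Then
\begin{equation*}
|S_N|\le \|\mathsf{f}_m\|_\infty\, \mathcal{N}_N(A)^m,
\end{equation*}
where $\mathcal{N}_N(A)$ is the number of eigenangles in $[-A/N,A/N]$. It therefore suffices to show $\sup_N\mathbb{E}[\mathcal{N}_N(A)^{m+1}]<\infty$. I would bound these moments by a uniform bound on the correlation functions in the microscopic scaling: expand $\mathcal{N}_N(A)^{m+1}$ into factorial moments of orders $p\le m+1$, each equal to
\begin{equation*}
\int_{[-A,A]^p}N^{-p}\boldsymbol{\rho}^{(p)}_{N,\beta}(y/N)\,\mathrm{d}y.
\end{equation*}
By Proposition \ref{CbetaECorr} (with $m$ replaced by $p$) and Lemma \ref{NormCorrAsymptotics}, $N^{-p}\boldsymbol{\rho}^{(p)}_{N,\beta}(y/N)$ is bounded by a constant times
\begin{equation*}
N^{-\binom p2\beta}\prod_{i<j}\big|\mathrm{e}^{\mathrm{i}y_i/N}-\mathrm{e}^{\mathrm{i}y_j/N}\big|^{\beta}\;\mathbb{E}\Big[\prod_{j=2}^p\big|\mathfrak{q}_{N-p}^{\beta,p\beta/2}(\mathrm{e}^{\mathrm{i}(y_j-y_1)/N})\big|^\beta\Big].
\end{equation*}
The Vandermonde factor is at most $\prod|y_i-y_j|^\beta$, which is bounded on $[-A,A]^p$. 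The expectation is bounded uniformly in $N$ and $y$ by Theorem \ref{MainBound}: there $\sum r_j=(p-1)\beta\le p\beta=2\delta$, and the right side of the bound is at most $C$. The small mismatch between $N$ and $N-p$ in the argument is harmless because the bound is uniform in $x$. Hence all these factorial moments are bounded uniformly in $N$. This is essentially the same uniform estimate used in Proposition \ref{PropAsymptCorr}, and it is where the real input lies.

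\textbf{Step 3: conclusion.} Uniform integrability together with convergence in law gives $\mathbb{E}[S_N]\to\mathbb{E}[S_\infty]$, which is the claim. If one prefers to avoid any fine point about extending correlation-function identities from indicators of disjoint sets to general $\mathsf{f}_m$, the identity for $\mathsf{Sine}_\beta$ can be obtained by approximating $\mathsf{f}_m$ with finite combinations of products of indicators of disjoint boxes. The diagonal sets have zero Lebesgue measure, and $\boldsymbol{\rho}^{(m)}_\beta$ is locally integrable because $\mathsf{Sine}_\beta$ has finite moments, as recalled before the definition of $\boldsymbol{\rho}^{(m)}_\beta$.
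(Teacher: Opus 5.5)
Your proposal is correct and follows the paper's route: convergence of $N\cdot$C$\beta$E to $\mathsf{Sine}_\beta$ gives convergence in law of the factorial sums, and uniform integrability comes from bounding $N^{-p}\boldsymbol{\rho}^{(p)}_{N,\beta}(\cdot/N)$ uniformly in $N$ via Proposition~\ref{CbetaECorr}, Lemma~\ref{NormCorrAsymptotics} and Theorem~\ref{MainBound}. The paper states this bound over all of $\mathbb{R}^m$ (it reuses it later in Theorem~\ref{MainThm2}), whereas you only need it on compacts. One small slip: after multiplying by $N^{-p}$, the prefactor from Lemma~\ref{NormCorrAsymptotics} grows like $N^{+\binom{p}{2}\beta}$, not $N^{-\binom{p}{2}\beta}$, and it is exactly this power that cancels against $|\mathrm{e}^{\mathrm{i}y_i/N}-\mathrm{e}^{\mathrm{i}y_j/N}|\le |y_i-y_j|/N$ to give your bound $\prod_{i<j}|y_i-y_j|^\beta$.
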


\begin{proof}
Let us write $\boldsymbol{\Xi}_{N,\beta}=\{\theta_1,\dots,\theta_N\}$ for the random point process with $(\theta_1,\dots,\theta_N)$  being $\textnormal{C}\beta \textnormal{E}_N$-distributed.  It is known that, see \cite{ValkoViragCarousel,KillipStoiciu}, that as random point processes, as $N \to \infty$,
\begin{equation}\label{PointProcessConv}
N\boldsymbol{\Xi}_{N,\beta}\overset{\mathrm{d}}\longrightarrow \mathsf{Sine}_\beta.
\end{equation}
We only need to show that the correlation functions $(\boldsymbol{\lambda}_{N,\beta}^{(m)})_{m=1}^\infty$  of $N\boldsymbol{\Xi}_{N,\beta}$ also converge to the corresponding ones for $\mathsf{Sine}_\beta$. First, by a simple change of variables we readily get $(\boldsymbol{\lambda}_{N,\beta}^{(m)})_{m=1}^\infty=(N^{-m}\boldsymbol{\rho}_{N,\beta}^{(m)}(x_1/N,\dots,x_m/N))_{m=1}^\infty$.
Now, for all $m\in \mathbb{N}$, making use of the formula from Proposition \ref{CbetaECorr}, the bound from Theorem \ref{MainBound}, along with the working in Lemma \ref{NormCorrAsymptotics} and Proposition \ref{PropAsymptCorr}, we obtain for some constant $C_{m,\beta}$:
\begin{equation}\label{UniformBoundCorr}
\sup_{N\ge m+1} \sup_{(x_1,\dots,x_m)\in \mathbb{R}^m} \boldsymbol{\lambda}_{N,\beta}^{(m)}(x_1,\dots,x_m) \le C_{m,\beta}.
\end{equation}
Hence, for all $\ell \in \mathbb{N}$ with $\mathscr{A}$ being an arbitrary compact Borel set we have, for universal constants $c_{\ell, m} > 0$, 
\begin{equation}\label{UniformBoundMomentPoints}
\mathbb{E}\left[\left( (N\boldsymbol{\Xi}_{N,\beta})\left(\mathscr{A}\right)\right)^\ell\right]=\sum_{m=1}^\ell c_{\ell,m} \int_{\mathscr{A}^m}\boldsymbol{\lambda}_{N,\beta}^{(m)}(x_1,\dots,x_m)\mathrm{d}x_1\cdots \mathrm{d}x_m\le C_{\mathscr{A},\ell,\beta},
\end{equation}
for some constant $C_{\mathscr{A},\ell,\beta} > 0$ independent of $N$. From \eqref{PointProcessConv}, we get, as $N \to \infty$,
\begin{equation}\label{PointProcessSumConv}
\sum_{\mathsf{y}_1\neq \mathsf{y}_2\cdots \neq \mathsf{y}_m \in N \boldsymbol{\Xi}_{N,\beta}}\mathsf{f}_m(\mathsf{y}_1,\dots,\mathsf{y}_m)\overset{\mathrm{d}}{\longrightarrow} \sum_{\mathsf{y}_1\neq \mathsf{y}_2\cdots \neq \mathsf{y}_m \in  \mathsf{Sine}_\beta}\mathsf{f}_m(\mathsf{y}_1,\dots,\mathsf{y}_m).
\end{equation}
Since by definition of correlation function \cite{BorodinDet,JohanssonDet}, we have
\begin{align*}
  \mathbb{E}\left[\sum_{\mathsf{y}_1\neq \mathsf{y}_2\cdots \neq \mathsf{y}_m \in N \boldsymbol{\Xi}_{N,\beta}}\mathsf{f}_m(\mathsf{y}_1,\dots,\mathsf{y}_m)\right] &=\frac{1}{N^m}\int_{\mathbb{R}^m}\boldsymbol{\rho}_{N,\beta}^{(m)}\left(\frac{x_1}{N},\dots,\frac{x_m}{N}\right)\mathsf{f}_m(x_1,\dots,x_m)\mathrm{d}x_1\cdots\mathrm{d}x_m,\\
  \mathbb{E}\left[\sum_{\mathsf{y}_1\neq \mathsf{y}_2\cdots \neq \mathsf{y}_m \in  \mathsf{Sine}_\beta}\mathsf{f}_m(\mathsf{y}_1,\dots,\mathsf{y}_m)\right]  &=\int_{\mathbb{R}^m}\boldsymbol{\rho}_\beta^{(m)}(x_1,\dots,x_m)\mathsf{f}_m(x_1,\dots,x_m)\mathrm{d}x_1\cdots\mathrm{d}x_m,
\end{align*}
the desired result follows using \eqref{PointProcessSumConv} by virtue of dominated convergence and uniform integrability, which is a consequence of the uniform bound \eqref{UniformBoundMomentPoints} since $\mathsf{f}_m$ is continuous with compact support. 
\end{proof}

We next prove a number of intermediate results used to establish Theorem \ref{MainThm1}

\begin{prop}\label{ConvDistrMoM}
Let $\beta,\delta,s>0$ and suppose $4s(\delta-s)>\beta$. Then, as $N \to \infty$,
\begin{equation}
\int_{-\pi N}^{\pi N} \left|\mathfrak{q}_N^{\beta,\delta}(\mathrm{e}^{\mathrm{i}x/N})\right|^{2s}\mathrm{d}x\overset{\mathrm{d}}{\longrightarrow}\int_{-\infty}^\infty \left|\tilde{\boldsymbol{\xi}}^{\beta,\delta}(x)\right|^{2s}\mathrm{d}x.
\end{equation}
\end{prop}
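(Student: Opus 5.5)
Work in the almost sure coupling of Proposition \ref{LiValkoProp}. Since $|\mathrm{e}^{-\mathrm{i}x/2}|=1$ for real $x$, we have $|\mathfrak{q}_N^{\beta,\delta}(\mathrm{e}^{\mathrm{i}x/N})|^{2s}\to|\tilde{\boldsymbol{\xi}}^{\beta,\delta}(x)|^{2s}$ almost surely, uniformly on compact subsets of $\mathbb{R}$. The plan is a truncation argument. Fix $A>0$ and split
\begin{equation*}
\int_{-\pi N}^{\pi N}\left|\mathfrak{q}_N^{\beta,\delta}(\mathrm{e}^{\mathrm{i}x/N})\right|^{2s}\mathrm{d}x=\int_{-A}^{A}\left|\mathfrak{q}_N^{\beta,\delta}(\mathrm{e}^{\mathrm{i}x/N})\right|^{2s}\mathrm{d}x+\int_{A<|x|<\pi N}\left|\mathfrak{q}_N^{\beta,\delta}(\mathrm{e}^{\mathrm{i}x/N})\right|^{2s}\mathrm{d}x.
\end{equation*}
By the locally uniform convergence, the first term converges almost surely to $\int_{-A}^A|\tilde{\boldsymbol{\xi}}^{\beta,\delta}(x)|^{2s}\mathrm{d}x$. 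By the standard approximation lemma for convergence in distribution (e.g.\ Billingsley, Theorem 3.2), it then suffices to show two things. First, $\int_{|x|>A}|\tilde{\boldsymbol{\xi}}^{\beta,\delta}(x)|^{2s}\mathrm{d}x\to0$ in probability as $A\to\infty$. Second, for every $\varepsilon>0$,
\begin{equation*}
\lim_{A\to\infty}\limsup_{N\to\infty}\mathbb{P}\left(\int_{A<|x|<\pi N}\left|\mathfrak{q}_N^{\beta,\delta}(\mathrm{e}^{\mathrm{i}x/N})\right|^{2s}\mathrm{d}x>\varepsilon\right)=0.
\end{equation*}

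The natural tool is the one-point case $\ell=1$, $r_1=2s$ of Theorem \ref{MainBound}, which is admissible when $2s\le 2\delta$. This gives $\mathbb{E}[|\mathfrak{q}_N^{\beta,\delta}(\mathrm{e}^{\mathrm{i}x/N})|^{2s}]\le C(1+|x|^{(2\delta-2s)2s/\beta})^{-1}$ uniformly in $N$ and $x$. The exponent $4s(\delta-s)/\beta$ exceeds $1$ exactly under the hypothesis $4s(\delta-s)>\beta$. Note that this hypothesis also forces $\delta>s$, so the condition $2s\le2\delta$ holds. Hence, by Markov's inequality and Fubini,
\begin{equation*}
\mathbb{E}\left[\int_{A<|x|<\pi N}\left|\mathfrak{q}_N^{\beta,\delta}(\mathrm{e}^{\mathrm{i}x/N})\right|^{2s}\mathrm{d}x\right]\le C\int_{|x|>A}\frac{\mathrm{d}x}{1+|x|^{4s(\delta-s)/\beta}}\xrightarrow[A\to\infty]{}0,
\end{equation*}
uniformly in $N$. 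The same bound for the limit follows from Corollary \ref{BoundLimitingEntire}, or from Fatou's lemma applied to the above. In particular $\int_{\mathbb{R}}|\tilde{\boldsymbol{\xi}}^{\beta,\delta}|^{2s}<\infty$ almost surely, so the limit variable is well defined and its tails vanish in $L^1$.

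The only real point to check is that Theorem \ref{MainBound} is genuinely uniform over the whole window $|x|\le\pi N$. Its statement allows arbitrary real $x_1$, and $\mathfrak{q}_N^{\beta,\delta}(\mathrm{e}^{\mathrm{i}x/N})$ is $2\pi N$-periodic, so there is no issue. This is where the sharpness of the decay exponent in Theorem \ref{MainBound} is essential: the integrability threshold $4s(\delta-s)>\beta$ is exactly what makes the tail integral finite. Combining the three ingredients yields the convergence in distribution claimed in Proposition \ref{ConvDistrMoM}.
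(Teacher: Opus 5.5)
Your proposal is correct and takes essentially the same route as the paper. You work in the Li--Valk\'o coupling, handle $[-A,A]$ by almost sure locally uniform convergence, and control both tails in $L^1$ via the one-point case of Theorem \ref{MainBound} and Corollary \ref{BoundLimitingEntire}, with $4s(\delta-s)>\beta$ giving integrability. The only difference is that you conclude with Billingsley's approximation theorem, whereas the paper shows directly that the difference of the two integrals tends to zero in probability within the coupling.
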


\begin{proof}
It is enough to prove that, with the coupling 
given in Proposition \ref{LiValkoProp}, 
$\mathsf{Y}_{N}$ tends to zero in probability, where
$$\mathsf{Y}_{N} \overset{\mathrm{def}}{=} \left| \int_{-\pi N}^{\pi N} \left|\mathfrak{q}_N^{\beta,\delta}(\mathrm{e}^{\mathrm{i}x/N})\right|^{2s}\mathrm{d}x
- \int_{-\infty}^\infty \left|\tilde{\boldsymbol{\xi}}^{\beta,\delta}(x)\right|^{2s}\mathrm{d}x \right|.
$$
For $0 < R \leq \pi N$, we have 
$$\mathsf{Y}_{N} \leq \mathsf{Y}_{N,R}
+  \mathsf{Z}_{N,R},$$
where 
\begin{align*}
\mathsf{Y}_{N,R}&\overset{\mathrm{def}}{=}\left|\int_{-R}^R \left|\mathfrak{q}_N^{\beta,\delta}(\mathrm{e}^{\mathrm{i}x/N})\right|^{2s}\mathrm{d}x-\int_{-R}^{R}\left|\tilde{\boldsymbol{\xi}}^{\beta,\delta}(x)\right|^{2s}\mathrm{d}x\right|,\\
\mathsf{Z}_{N,R}&\overset{\mathrm{def}}{=}\int_{\mathbb{R}\backslash[-R,R]} \left|\mathfrak{q}_N^{\beta,\delta}(\mathrm{e}^{\mathrm{i}x/N})\right|^{2s}\mathrm{d}x + \int_{\mathbb{R}\backslash[-R,R]} \left|\tilde{\boldsymbol{\xi}}^{\beta,\delta}(x)\right|^{2s}\mathrm{d}x. 
\end{align*}
Hence, for all $R, \epsilon > 0$, 
$$\underset{N \rightarrow \infty}{\limsup} 
\, \mathbb{P} (\mathsf{Y}_{N} \geq \epsilon) 
\leq \underset{N \rightarrow \infty}{\limsup} 
\, \mathbb{P} (\mathsf{Y}_{N,R} \geq \epsilon/2)  +  \underset{N \rightarrow \infty}{\limsup} 
\, \mathbb{P} (\mathsf{Z}_{N,R} \geq \epsilon/2).$$
By Proposition \ref{LiValkoProp}, 
$\mathsf{Y}_{N,R}$ almost surely converges to zero when $N \to \infty$, so the first upper limit in the right-hand side is equal to zero. 
By virtue of the bound from Theorem \ref{MainBound} and Corollary \ref{BoundLimitingEntire} we get, since $\delta > s$ by assumption, 
\begin{equation*}
 \limsup_{N \to \infty}\mathbb{E}\left[\mathsf{Z}_{N,R}\right]\le  C\int_{\mathbb{R}\backslash[-R,R]} \frac{\mathrm{d}x}{1+|x|^{4s(\delta-s)/\beta}},
\end{equation*}
for $C > 0$ depending only on $\beta, \delta$ and $s$. 
Hence, 
$$\underset{N \rightarrow \infty}{\limsup} 
\, \mathbb{P} (\mathsf{Y}_{N} \geq \epsilon)  \leq \underset{N \rightarrow \infty}{\limsup} 
\, \mathbb{P} (\mathsf{Z}_{N,R} \geq \epsilon/2)
\leq \frac{2 C}{\epsilon} \int_{\mathbb{R}\backslash[-R,R]} \frac{\mathrm{d}x}{1+|x|^{4s(\delta-s)/\beta}}
$$
for any $R > 0$. 
Since $4s (\delta - s) / \beta > 1$ by assumption, the right-hand side is finite and tends to zero when $R \to \infty$, which implies 
$$\underset{N \rightarrow \infty}{\limsup} 
\, \mathbb{P} (\mathsf{Y}_{N} \geq \epsilon)  =  0,$$
concluding the proof.
\end{proof}

\begin{prop}\label{PropIntegerMoments}
Let $\beta,\delta,s>0$ and $m \in \mathbb{N}$ be such that $2s(2\delta-(m+1)s)>\beta$ and $4m(\delta-m s)s>\beta$. Then, we have, as $N \to \infty$,
\begin{equation}
\mathbb{E}\left[\left(\int_{-\pi N}^{\pi N}\left|\mathfrak{q}_N^{\beta,\delta}(\mathrm{e}^{\mathrm{i}x/N})\right|^{2s}\mathrm{d}x\right)^m\right]\longrightarrow \mathbb{E}\left[\left(\int_{-\infty}^\infty \left|\tilde{\boldsymbol{\xi}}^{\beta,\delta}(x)\right|^{2s}\mathrm{d}x\right)^m\right].
\end{equation}
\end{prop}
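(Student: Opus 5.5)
Convergence in distribution of $I_N:=\int_{-\pi N}^{\pi N}|\mathfrak{q}_N^{\beta,\delta}(\mathrm{e}^{\mathrm{i}x/N})|^{2s}\mathrm{d}x$ to $I_\infty:=\int_{\mathbb{R}}|\tilde{\boldsymbol{\xi}}^{\beta,\delta}(x)|^{2s}\mathrm{d}x$ is already given by Proposition \ref{ConvDistrMoM}. Its hypothesis $4s(\delta-s)>\beta$ follows from $4m(\delta-ms)s>\beta$, since $m(\delta-ms)\le \delta-s$ whenever $\delta-ms>0$. It therefore suffices to show that $(I_N^m)_N$ is uniformly integrable. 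I would prove this by bounding a slightly higher moment, or more robustly by a truncation argument.

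\textbf{Truncation.} Write $I_N=I_N^{\le R}+I_N^{>R}$, where $I_N^{\le R}$ is the integral over $[-R,R]$. For fixed $R$, the $m$-th moment of $I_N^{\le R}$ converges. Indeed, by Fubini it is an integral over $[-R,R]^m$ of $\mathbb{E}[\prod_j|\mathfrak{q}_N(\mathrm{e}^{\mathrm{i}x_j/N})|^{2s}]$, and pointwise convergence holds by almost sure convergence from Proposition \ref{LiValkoProp}. Uniform integrability at fixed points comes from Theorem \ref{MainBound} with exponents $r_j=2s(1+\eta)$, admissible since $2ms(1+\eta)\le 2\delta$ for small $\eta$ (as $\delta>ms$). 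Dominated convergence in $(x_j)$ then uses the integrable majorant given by the bound with $r_j=2s$:
\[
\prod_{i<j}\frac{1}{1+|x_i-x_j|^{4s^2/\beta}},
\]
which is bounded on compacts. It remains to show
\[
\lim_{R\to\infty}\sup_N \mathbb{E}\big[I_N^m-(I_N^{\le R})^m\big]=0,
\]
together with the analogous statement for $I_\infty$ via Corollary \ref{BoundLimitingEntire}. The difference is controlled by the integral of the same $m$-point moment over the region of $(x_1,\dots,x_m)\in[-\pi N,\pi N]^m$ where at least one $|x_j|>R$.

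\textbf{Main obstacle: integrability of the $m$-point bound over $\mathbb{R}^m$.} By Theorem \ref{MainBound} with all $r_j=2s$ (so $\sum r_j=2ms\le 2\delta$), the integrand is bounded by
\[
C\prod_{j=1}^m \frac{1}{1+|x_j|^{a}}\prod_{i<j}\frac{1}{1+|x_i-x_j|^{b}},\qquad a=\frac{2s(2\delta-2ms)}{\beta},\quad b=\frac{4s^2}{\beta},
\]
uniformly in $N$. I would show this is integrable on $\mathbb{R}^m$ exactly under the two stated conditions; presumably this is the content of Lemma \ref{IntegralFinite}. The plan is to decompose by clusters: order the points, group them according to whether they are at mutual distance $\lesssim$ their distance from the origin, and do a dyadic power counting. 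When all $m$ points travel to infinity together at scale $L$, the decay is $L^{-ma}$ with volume $L^{m-1}\,\mathrm{d}L$; this needs $ma>1$, which is the condition $4m(\delta-ms)s>\beta$. When a cluster of $p$ points goes to infinity at scale $L$ while the others remain bounded, the decay is $L^{-pa-p(m-p)b}$ against $L^{p-1}\,\mathrm{d}L$. Internal spreading inside a cluster gives further conditions of the same type. The most restrictive case besides the full cluster should be a single point escaping ($p=1$), requiring
\[
a+(m-1)b>1,\quad\text{i.e.}\quad 2s\big(2\delta-2ms+2(m-1)s\big)=2s(2\delta-2s)>\beta.
\]
This is weaker than the given condition $2s(2\delta-(m+1)s)>\beta$, so the given condition should cover the intermediate configurations as well. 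Verifying that every cluster configuration is dominated by these two conditions is the step I expect to be the hardest; I would carry it out by induction on $m$, integrating out the farthest point first. Once the integrability is established, the tail over $\{\max_j|x_j|>R\}$ tends to $0$ uniformly in $N$, and combining with the convergence of the truncated moments gives the claim.
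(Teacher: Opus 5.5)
Your argument is essentially the paper's. It expands the $m$-th power by Fubini and gets pointwise convergence of the $m$-point moments from the almost sure coupling of Proposition \ref{LiValkoProp}, with uniform integrability from Theorem \ref{MainBound} at slightly larger exponents. It then dominates by the Theorem \ref{MainBound} majorant, integrable by Lemma \ref{IntegralFinite} with $a=4s(\delta-ms)/\beta$, $b=4s^2/\beta$. Your truncation at $[-R,R]^m$ is just this dominated convergence unpacked, and it already gives convergence of the moments on its own, so the appeal to Proposition \ref{ConvDistrMoM} is superfluous.

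There are two slips. First, $m(\delta-ms)\le\delta-s$ fails for $m\ge 2$ and $\delta>(m+1)s$, although $4s(\delta-s)\ge 2s(2\delta-(m+1)s)>\beta$ holds anyway. Second, your power counting is off. The condition $a+(m-1)b/2>1$ is not a safe strengthening of the single-escape condition $a+(m-1)b>1$; it is exactly necessary. It comes from all $m$ points escaping with mutual distances $\asymp L$, where the integrand is $\asymp L^{-ma-bm(m-1)/2}$ against volume $\asymp L^{m-1}\,\mathrm{d}L$. The condition $ma>1$ comes from all $m$ points escaping in a tight cluster, where the integrand is $\asymp L^{-ma}$ against volume $\asymp \mathrm{d}L$.
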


\begin{proof}
Since $m\in \mathbb{N}$, we can expand the power and use Fubini-Tonelli theorem to bring the expectation inside the multiple integral. Define the functions:
\begin{align*}
\mathfrak{F}_{N,m}^{\beta,\delta}(x_1,\dots,x_m)&=\mathbb{E}\left[\left|\mathfrak{q}_N^{\beta,\delta}(\mathrm{e}^{\mathrm{i}x_1/N})\right|^{2s}\cdots \left|\mathfrak{q}_N^{\beta,\delta}(\mathrm{e}^{\mathrm{i}x_m/N})\right|^{2s}\right],\\
\mathfrak{F}_{\infty,m}^{\beta,\delta}(x_1,\dots,x_m)&=\mathbb{E}\left[\left|\tilde{\boldsymbol{\xi}}^{\beta,\delta}(x_1)\right|^{2s}\cdots \left|\tilde{\boldsymbol{\xi}}^{\beta,\delta}(x_m)\right|^{2s}\right].
\end{align*}
We make two claims from which the conclusion immediately follows by the dominated convergence theorem. The first claim is that for all $(x_1,\dots,x_m) \in \mathbb{R}^m$, as $N \to \infty$,
\begin{equation}\label{ConvergenceForDCT}
\mathfrak{F}_{N,m}^{\beta,\delta}(x_1,\dots,x_m) \longrightarrow \mathfrak{F}_{\infty,m}^{\beta,\delta}(x_1,\dots,x_m).
\end{equation}
The second claim is that for all $N \in \mathbb{N}$ and $(x_1,\dots,x_m) \in \mathbb{R}^m$,
\begin{equation}\label{BoundForDCT}
\mathfrak{F}_{N,m}^{\beta,\delta}(x_1,\dots,x_m) \le C \mathfrak{D}_m(x_1,\dots,x_m),
\end{equation}
for some constant $C$ independent of $N\in \mathbb{N}$ and $(x_1,\dots,x_m)\in \mathbb{R}^m$, where $\mathfrak{D}_m$ is given by:
\begin{equation*}
\mathfrak{D}_m(x_1,\dots,x_m)=\prod_{j=1}^{m}\frac{1}{1+|x_j|^{4(\delta-ms)s/\beta}} \prod_{1\le j < k \le m} \frac{1}{1+|x_j-x_k|^{4s^2/\beta}},
\end{equation*}
and satisfies
\begin{equation}\label{IntegrabilityForDCT}
\int_{\mathbb{R}^m}\mathfrak{D}_m(x_1,\dots,x_m)\mathrm{d}x_1\cdots\mathrm{d}x_m<\infty.
\end{equation}
To show \eqref{ConvergenceForDCT}, by virtue of the almost sure convergence afforded by Proposition \ref{LiValkoProp} (in the coupling therein) we only need some uniform integrability. In particular, it suffices to have that for some $r>s$,
\begin{equation*}
\sup_{N\ge 1} \mathbb{E}\left[\left|\mathfrak{q}_N^{\beta,\delta}(\mathrm{e}^{\mathrm{i}x_1/N})\right|^{2r}\cdots \left|\mathfrak{q}_N^{\beta,\delta}(\mathrm{e}^{\mathrm{i}x_m/N})\right|^{2r}\right]<\infty.
\end{equation*}
Picking $r$ such that $s<r\le\frac{\delta}{m}$ (since $ms<\delta$) this follows from Theorem \ref{MainBound}. For the second claim, the bound \eqref{BoundForDCT} is immediate from Theorem \ref{MainBound} while the integrability condition \eqref{IntegrabilityForDCT} follows from Lemma \ref{IntegralFinite} by taking $a=4(\delta-ms)s/\beta$ and $b=4s^2/\beta$ and observing that  $2s(2\delta-(m+1)s)>\beta$ and and $4m(\delta-m s)s>\beta$ are exactly equivalent to the conditions of Lemma \ref{IntegralFinite}. This completes the proof. 
\end{proof}

\begin{lem}\label{IntegralFinite}
Let $m\in \mathbb{N}$, $a,b \in \mathbb{R}_+$ such that $a+(m-1)b/2>1$ and $ma>1$. Then,
\begin{equation*}
\int_{\mathbb{R}^m}\prod_{j=1}^m\frac{1}{1+|x_j|^a}\prod_{1\le i< j \le m}\frac{1}{1+|x_i-x_j|^b}\mathrm{d}x_1\cdots\mathrm{d}x_m<\infty.
\end{equation*}
\end{lem}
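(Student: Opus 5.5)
The plan is a multiscale power-counting argument, in the spirit of Weinberg's convergence theorem, phrased directly through a dyadic decomposition. First replace $1+|x|^a$ by $(1+|x|)^a$, and similarly for $b$; for $a,b\ge 0$ the two are comparable up to constants. Next, heuristically identify the dangerous regions; this also shows the two hypotheses are exactly what is needed. Suppose a subset $S$ of $k$ variables is sent to distance $R$ from the origin, internally spread at scale $R$, while the others stay bounded. Then the integrand is of order $R^{-ka-k(m-k)b-\binom{k}{2}b}$ over a volume of order $R^{k}$. Convergence of $\sum_R R^{k}\cdot R^{-ka-k(m-k)b-\binom{k}{2}b}$ over dyadic $R$ needs $a+(m-k)b+(k-1)b/2>1$, and this is weakest at $k=m$, giving $a+(m-1)b/2>1$. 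If instead all $m$ points stay within $O(1)$ of each other and the cluster moves to distance $R$, the volume is only $R$ and the integrand is $R^{-ma}$, giving $ma>1$. The proof will show that every configuration is controlled by a combination of these two mechanisms.

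Concretely, I would argue by induction on $m$ through a hierarchical clustering. By symmetry, restrict to $|x_1|\ge |x_2|\ge\cdots\ge |x_m|$. Put $R=1+|x_1|$, and let $D=1+\max_{i,j}|x_i-x_j|$ be the diameter scale of the configuration. There are two regimes.

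\emph{Case $D\gtrsim R$.} Some point lies at distance $\gtrsim R$ from $x_1$. I would split the variables into the group $S$ of points within distance $R/4$ of $x_1$ and its complement. Every cross pair contributes at least $R^{-b}$. All points of $S$ satisfy $|x_i|\asymp R$, so each contributes $R^{-a}$. Integrating the complement uses the induction hypothesis on $m-|S|$ points; since $|S|\ge1$, the total cross decay $R^{-|S|(m-|S|)b}$ can be spent to compensate. Inside $S$, integrate over the relative positions, again by induction with translated coordinates, or bound them crudely when $S$ is small. Then sum over dyadic $R$ using the exponent computed above.

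\emph{Case $D\ll R$.} All points lie in a ball of radius $\ll R$ around $x_1$, so $|x_j|\asymp R$ for all $j$ and the integrand is at most $R^{-ma}\prod_{i<j}(1+|x_i-x_j|)^{-b}$. Integrate out the relative coordinates $y_j=x_j-x_1$ over $|y_j|\le D$, decomposed dyadically in $D$. For the relative integral I would prove an auxiliary bound
\begin{equation*}
\int_{|y_j|\le D}\prod_{i<j}(1+|y_i-y_j|)^{-b}\,\mathrm{d}y\lesssim D^{\max(0,\,m-1-\binom{m}{2}b)+\epsilon}.
\end{equation*}
This would again come from the same clustering recursion, where the worst case is full spreading at scale $D$. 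Summing over $D\le R$ then gives a factor $\max(1,R^{m-1-\binom{m}{2}b})$ up to $R^\epsilon$. Multiplying by $R^{-ma}$ and integrating in $x_1$ over a shell of volume $R$ needs either $ma>1$ or $ma+\binom m2 b-(m-1)>1$. The second is exactly $m\bigl(a+(m-1)b/2-1\bigr)>0$. The $\epsilon$-losses are absorbed because both hypotheses are strict inequalities.

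The main obstacle is the bookkeeping in the intermediate configurations. These are nested clusters at several scales, where some points are far from the origin and others are not, and the worst exponent must be shown to be always one of the two extreme cases. I would handle this by formulating a stronger inductive statement. For any $m$, $a\ge0$ and $b\ge0$, the integral restricted to $\{\max_j|x_j|\le R\}$ is $O\bigl(1+R^{\max(0,\,1-ma,\,m-ma-\binom m2 b)+\epsilon}\bigr)$. Mixed configurations then reduce to products of this bound for subclusters, together with the elementary inequality that the exponent is convex in cluster size, so the extremes $k=1$ and $k=m$ dominate. Applying the stronger statement with the given $a,b$ and letting $R\to\infty$ yields finiteness.
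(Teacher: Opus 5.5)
Your multiscale strategy is a genuinely different route from the paper's, and it is viable in principle: the exponent $\max\bigl(0,1-ma,m-ma-\binom m2 b\bigr)$ you propose for the truncated integral is in fact a valid bound. As written, however, the step you yourself identify as the main obstacle is only asserted, and the reason you give does not cover it. When you separate a far group of $k$ points from a near group containing the origin, your recursion produces the exponent $1-ka-k(m-k)b+\max\bigl(0,k-1-\binom k2 b\bigr)+e(m-k)$, where $e(n)=\max\bigl(0,1-na,n-na-\binom n2 b\bigr)$. You need this to be at most $e(m)$ for every $1\le k\le m$. Expanding the maxima produces multi-cluster configurations that lie in neither of your heuristic families. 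An example is two tight clusters of sizes $k$ and $m-k$, at distance $\asymp R$ from each other and from the origin, with exponent $2-ma-k(m-k)b$. When $k(m-k)b<1$ this exceeds $1-ma$, so you must show it is at most $\max\bigl(0,m-ma-\binom m2 b\bigr)$. Convexity in the single cluster size $k$ (your ``extremes $k=1$ and $k=m$'') says nothing about this. What closes the argument is a two-parameter count. Suppose $n$ far points form $q+1$ tight blocks mutually at distance $\asymp R$, while the other $m-n$ points stay near the origin. Then the exponent is at most $1+q-na-b\bigl(n(m-n)+nq-\tfrac{q(q+1)}{2}\bigr)$, which is convex in $q$ and, at $q=0$ and $q=n-1$, convex in $n$. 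The same count is also what justifies your auxiliary bound for the relative coordinates, which is the translation-invariant instance of the same statement.

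Two further steps fail as written. First, in the case $D\gtrsim R$, taking $S$ to be the points within $R/4$ of $x_1$ does not separate cross pairs. A point of $S$ at distance $R/4-\eta$ from $x_1$ and a point outside $S$ at distance $R/4+\eta$ on the same side are only $2\eta$ apart, so the factor $R^{-b}$ per cross pair is not available. Instead, add $0$ to the configuration and cut at the largest of the $m$ consecutive gaps of $\{0,x_1,\dots,x_m\}$, which has length at least $\max_j|x_j|/m$. The side not containing $0$ is then at distance $\gtrsim R$ from $0$ and from the other side, and your case $D\ll R$ becomes the instance where this side is everything. Second, your closing deduction does not follow. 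Under the lemma's hypotheses the exponent in your stronger statement equals $0$, so it only gives $O(R^{\epsilon})$, which yields nothing as $R\to\infty$. You have to sum over dyadic shells $\{\max_j|x_j|\asymp R\}$ instead. Every configuration in a shell has at least one far point, and the count above shows that all such exponents are negative precisely when $ma>1$ and $a+(m-1)b/2>1$. Hence the $\epsilon$-losses from the recursion can be absorbed and the shells summed. (Relatedly, in your case $D\ll R$ you need both $ma>1$ and $ma+\binom m2 b-(m-1)>1$, not either one.)

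For comparison, the paper's proof avoids all of this bookkeeping. It restricts to $0\le x_1\le\dots\le x_m$ and passes to gaps shifted to be $\ge 1$. Each factor $1+\sum_{\ell=i}^j y_\ell$ is then bounded below, up to a factor $m$, by the larger of two gaps. So on each ordering cell the integrand is a monomial, and finiteness reduces to finitely many linear inequalities in $a,b$, checked by concavity in the index.
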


\begin{proof}
The result is obvious for $m=1$ or $b=0$. Let us assume $m\ge 2$ and $b>0$. Since $|x_i - x_j| \geq \big||x_i| - |x_j| \big|$, the integral is, by symmetry, bounded by 
\begin{equation*}
2^m m! \int_{0\le x_1\le x_2\le\cdots\le x_m<\infty}\prod_{j=1}^m\frac{1}{(1+x_j)^a}\prod_{1\le i<j \le m}\frac{1}{(1+x_j-x_i)^b}\mathrm{d}x_1\cdots \mathrm{d}x_m.
\end{equation*}
It is then enough to check the finiteness of this last integral. 
By making the change of variables $y_1=x_1, y_2=x_2-x_1, \dots, y_m=x_m-x_{m-1}$, it is equal to
\begin{equation*}
\int_{\mathbb{R}_+^m} \prod_{j=1}^m \frac{1}{\left(1+\sum_{\ell=1}^jy_\ell\right)^a} \prod_{2\le i\le j\le m}\frac{1}{\left(1+\sum_{\ell=i}^j y_\ell\right)^b}\mathrm{d}y_1\cdots \mathrm{d}y_m.
\end{equation*}
Now, for $1\le i \le j \le m$, we have
$$ m \left(1+\sum_{\ell=i}^j y_{\ell} \right) \geq  \sum_{\ell = i}^j (1 + y_{\ell}).$$
Hence, it is enough to show
\begin{equation*}
 \int_{\mathbb{R}^{m}_+}\prod_{j=1}^m \frac{1}{\left(\sum_{\ell=1}^j (1 + y_\ell)\right)^a} \prod_{2\le i\le j\le m}\frac{1}{\left(\sum_{\ell=i}^j (1 + y_\ell) \right)^b}\mathrm{d}y_1\cdots \mathrm{d}y_m<\infty,
\end{equation*}
 or equivalently 
\begin{equation*}
 \int_{\mathbb{R}^{m}_{\ge 1}}\prod_{j=1}^m \frac{1}{\left(\sum_{\ell=1}^jy_\ell\right)^a} \prod_{2\le i\le j\le m}\frac{1}{\left(\sum_{\ell=i}^j y_\ell\right)^b}\mathrm{d}y_1\cdots \mathrm{d}y_m<\infty,
\end{equation*}
which is in turn implied by
\begin{equation*}
\int_{\mathbb{R}^{m}_{\ge 1}}\prod_{j=1}^m \frac{1}{\left(\max\{y_j,y_1\}\right)^a} \prod_{2\le i\le j\le m}\frac{1}{\left(\max\{y_i,y_j\}\right)^b}\mathrm{d}y_1\cdots \mathrm{d}y_m<\infty.
\end{equation*}
Write $\mathsf{Perm}[i;j]$ for the set of permutations of $\{i,\dots,j\}$.
Let us decompose $\mathbb{R}_{\ge 1}^m$ as follows
\begin{align*}
\mathbb{R}_{\ge 1}^m &=\bigcup_{1\le i \le m,  \sigma \in \mathsf{Perm}[2;m]}\mathsf{H}_{i;\sigma}, \\ \mathsf{H}_{i;\sigma}&=\left\{(y_1,y_2,\dots,y_m)\in \mathbb{R}^m:1\le y_{\sigma(2)}\le y_{\sigma(3)}\le \cdots \le y_{\sigma(i)} \le y_1  \le y_{\sigma(i+1)}\le \cdots \le y_{\sigma(m)}\right\},
\end{align*}
with the obvious convention for $i=1,m$, namely $y_1\le y_j$ and $y_1\ge y_j$ respectively for all $j=2,\dots,m$. Now, observe that on each $\mathsf{H}_{i;\sigma}$ we have:
\begin{equation*}
\prod_{j=1}^m\max\{y_j,y_1\}=y_1^{i}\prod_{j=i+1}^my_{\sigma(j)}, \ \ \prod_{2\le i \le j \le m} \max\{y_i,y_j\}=\prod_{j=2}^m y_{\sigma(j)}^{j-1}.
\end{equation*}
We can then bound the last integral by
\begin{equation*}
\sum_{1\le i \le m, \sigma \in \mathsf{Perm}[2;m]}\int_{\mathsf{H}_{i;\sigma}} y_1^{-ia}\prod_{j=2}^{i}y_{\sigma(j)}^{-(j-1)b}\prod_{j=i+1}^my_{\sigma(j)}^{-a-(j-1)b}\mathrm{d}y_1 \cdots \mathrm{d}y_m.
\end{equation*}
Observe that, for fixed $1\le i \le m$, the integrals over $\mathsf{H}_{i;\sigma}$ are the same for all $\sigma \in \mathsf{Perm}[2;m]$ and hence it suffices to check that each of the following $m$ integrals, where $1\le i \le m$, is indeed finite:
\begin{equation*}
\int_{1 \le x_1\le \cdots \le x_{i-1}\le t \le x_i \le \cdots \le x_{m-1}<\infty} t^{-ia}\prod_{j=1}^{i-1}x_j^{-jb}\prod_{j=i}^{m-1}x_j^{-a-jb}\mathsf{d}x_1\cdots \mathrm{d}x_{m-1}\mathrm{d}t<\infty.
\end{equation*}
By performing the integrations sequentially in $x_{m-1}, x_{m-2},\dots, x_i, t, x_{i-1}, \dots,x_1$ we require that each exponent on the $x_j$'s or $t$ is more negative than $-1$. A simple counting argument then gives the following constraints, for each $1\le i \le m $ (note that for $i=m$, constraint \eqref{Constraint1} is absent while for $i=1$, constraint \eqref{Constraint3} is absent):
\begin{align}
 -(m-j-1)+(m-j)a+b\left(\frac{m(m-1)}{2}-\frac{j(j-1)}{2}\right) -1 &>0, \ \ j=i,\dots,m-1,\label{Constraint1}\\
-(m-i)+ma+b\left(\frac{m(m-1)}{2}-\frac{i(i-1)}{2}\right)-1 &>0,\nonumber\\
  -(m-j)+ma+b\left(\frac{m(m-1)}{2}-\frac{j(j-1)}{2}\right)-1&>0, \ \ j=1,\dots,i-1.\label{Constraint3}
\end{align}
Observe that, for $b>0$, the functions in the variable $j$ defined by the left hand sides of \eqref{Constraint1} and \eqref{Constraint3} are concave and hence it suffices to only check the inequalities for $j=i,m-1$ and $j=1,i-1$ respectively.
Thus, overall we only need to check that the following constraints are satisfied:
\begin{align}
a+(m-1)b-1&>0,\label{Constraint1'}\\
-(m-i-1)+(m-i)a+b\left(\frac{m(m-1)}{2}-\frac{i(i-1)}{2}\right)-1&>0, \  \ i=1,\dots,m-1,\label{Constraint2'}\\
-(m-i)+ma+b\left(\frac{m(m-1)}{2}-\frac{i(i-1)}{2}\right)-1&>0, \ \ i=1,\dots, m, \label{Constraint3'}\\
-(m-i+1)+ma+b\left(\frac{m(m-1)}{2}-\frac{(i-1)(i-2)}{2}\right)-1&>0, \ \ i=2,\dots,m,\label{Constraint4'}\\
-(m-1)+ma+b\frac{m(m-1)}{2}-1&>0\label{Constraint5'}.
\end{align}
First, observe that \eqref{Constraint1'} is implied by \eqref{Constraint5'} which is in turn equivalent to $a+(m-1)b/2>1$. Then, observe again that, when $b>0$, the functions in the variable $i$ defined by the left-hand sides of \eqref{Constraint2'}, \eqref{Constraint3'} and \eqref{Constraint4'} are concave and  so it suffices to only check the constraints for $i=1,m-1$, $i=1,m$ and $i=2,m$ respectively. The resulting inequalities are given by,
\begin{align*}
 -(m-2)+(m-1)a+bm(m-1)/2-1&>0,\ \ \ \ \ \ \ \ \ \ \ \ \ \  \ \  a+b(m-1)-1>0, \\
 -(m-1)+ma+bm(m-1)/2-1&>0, \ \ \ \ \ \ \ \ \ \ \ \ \ \ \ \ \ \ \ \ \ \ \ \ \ \ \ \ \ \  \ ma-1>0,\\
 -(m-1)+ma+bm(m-1)/2-1&>0, \ \ \  -1+ma+b(m-1)-1>0.
\end{align*}
These inequalities can then be easily checked from $a+(m-1)b/2>1$ and $ma>1$ (also recalling $m\ge 2)$ which completes the proof.
\end{proof}

\begin{prop}\label{PropNonInteger}
Let $\beta,s>0$ and $k \in \mathbb{R}_{\ge 1}$ such that $2s^2(2k-\lceil k \rceil)>\beta$ and moreover for $k>1$, $4\lceil k-1\rceil(k-\lceil k-1\rceil)s^2>\beta$. Then, as $N \to \infty$,
\begin{equation*}
\mathbb{E}\left[\left(\int_{-\pi N}^{\pi N}\left|\mathfrak{q}_N^{\beta,ks}(\mathrm{e}^{\mathrm{i}x/N})\right|^{2s}\mathrm{d}x\right)^{k-1}\right]\longrightarrow \mathbb{E}\left[\left(\int_{-\infty}^\infty \left|\tilde{\boldsymbol{\xi}}^{\beta,ks}(x)\right|^{2s}\mathrm{d}x\right)^{k-1}\right].
\end{equation*}
\end{prop}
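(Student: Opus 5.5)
Write $\delta=ks$, $p=k-1\ge 0$, and set
\[
\mathsf{I}_N=\int_{-\pi N}^{\pi N}\left|\mathfrak{q}_N^{\beta,ks}(\mathrm{e}^{\mathrm{i}x/N})\right|^{2s}\mathrm{d}x,\qquad
\mathsf{I}_\infty=\int_{-\infty}^\infty \left|\tilde{\boldsymbol{\xi}}^{\beta,ks}(x)\right|^{2s}\mathrm{d}x .
\]
The plan is to combine the convergence in distribution from Proposition \ref{ConvDistrMoM} with uniform integrability of $\mathsf{I}_N^{k-1}$, the latter obtained from a bounded moment of order $m=\lceil k-1\rceil$ that Proposition \ref{PropIntegerMoments} already controls.

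\textbf{Trivial cases.} If $k=1$ both sides equal $1$. If $k\in\mathbb{N}$, the hypotheses become $2ks^2>\beta$ and $4(k-1)s^2>\beta$. With $m=k-1$ and $\delta=ks$, these are exactly the conditions $2s(2\delta-(m+1)s)>\beta$ and $4m(\delta-ms)s>\beta$ of Proposition \ref{PropIntegerMoments}, which then gives the claim directly.

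\textbf{Non-integer $k>1$.} Take $m=\lceil k-1\rceil=\lceil k\rceil-1$, so that $k-1<m$.
\begin{enumerate}
\item Check the hypotheses of Proposition \ref{PropIntegerMoments} for this $m$ and $\delta=ks$:
\begin{itemize}
\item $2s(2ks-(m+1)s)=2s^2(2k-\lceil k\rceil)>\beta$, which is the first assumption;
\item $4m(ks-ms)s=4\lceil k-1\rceil(k-\lceil k-1\rceil)s^2>\beta$, which is the second assumption.
\end{itemize}
This is precisely why the two technical conditions appear in the statement.
\item Convergence in distribution. Proposition \ref{ConvDistrMoM} needs $4s(\delta-s)=4s^2(k-1)>\beta$. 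Since $k-\lceil k-1\rceil\le 1$, we have $k-1\ge m(k-m)$, so $4s^2(k-1)\ge 4ms^2(k-m)>\beta$ and the hypothesis holds. Hence $\mathsf{I}_N\overset{\mathrm{d}}{\longrightarrow}\mathsf{I}_\infty$, and by the continuous mapping theorem $\mathsf{I}_N^{k-1}\overset{\mathrm{d}}{\longrightarrow}\mathsf{I}_\infty^{k-1}$.
\item Uniform integrability. Proposition \ref{PropIntegerMoments} shows $\mathbb{E}[\mathsf{I}_N^m]$ converges to a finite limit, so $\sup_N\mathbb{E}[\mathsf{I}_N^m]<\infty$ (finiteness for each fixed $N$ follows from Theorem \ref{MainBound}). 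Because $m/(k-1)>1$, the family $\{\mathsf{I}_N^{k-1}\}_N$ is bounded in $L^{m/(k-1)}$ and therefore uniformly integrable.
\end{enumerate}
Convergence in distribution plus uniform integrability gives convergence of expectations, which is the claim. The limit is finite, since $\mathbb{E}[\mathsf{I}_\infty^{k-1}]\le \liminf_N\mathbb{E}[\mathsf{I}_N^{k-1}]$ by Fatou.

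\textbf{Where care is needed.} The only delicate point is making sure the stated assumptions imply the hypothesis of Proposition \ref{ConvDistrMoM} as well as those of Proposition \ref{PropIntegerMoments}; step 2 above handles this with the inequality $k-1\ge m(k-m)$.
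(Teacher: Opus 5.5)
Your proposal is correct and follows the paper's proof essentially step for step: you settle the integer case directly via Proposition \ref{PropIntegerMoments} with $m=k-1$, and for non-integer $k$ you combine the convergence in distribution of Proposition \ref{ConvDistrMoM} with uniform integrability coming from the $\lceil k-1\rceil$-th moment in Proposition \ref{PropIntegerMoments}. The only cosmetic difference is that you obtain the hypothesis $4s^2(k-1)>\beta$ of Proposition \ref{ConvDistrMoM} from the second assumption via $k-1\ge m(k-m)$, whereas the paper obtains it from the first assumption via $4(k-1)\ge 2(2k-\lceil k\rceil)$ for $\lceil k\rceil\ge 2$; both routes are valid.
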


\begin{proof} The case $k=1$ is trivial. For $k\in \mathbb{N}$, with $k\ge 2$, the statement follows by taking $\delta=ks$ and $m=k-1$ in Proposition \ref{PropIntegerMoments}. Let us assume then that $k\in \mathbb{R}_{\ge 1} \backslash \mathbb{N}$. By taking $\delta=ks$ in Proposition \ref{ConvDistrMoM}, which is allowed since for $k>1$, $4(ks-s)s\ge 2s^2(2k-\lceil k \rceil)>\beta$, we get, as $N \to \infty$,
\begin{equation*}
\int_{-\pi N}^{\pi N}\left|\mathfrak{q}_N^{\beta,ks}(\mathrm{e}^{\mathrm{i}x/N})\right|^{2s}\mathrm{d}x\overset{\mathrm{d}}{\longrightarrow}\int_{-\infty}^{\infty}\left|\tilde{\boldsymbol{\xi}}^{\beta,ks}(x)\right|^{2s}\mathrm{d}x.
\end{equation*} 
Thus, to get the required statement we only need some uniform integrability which will be implied by convergence of the next larger integer moment. Let us take $\delta=ks$ and $m=\lceil k-1 \rceil$ in Proposition \ref{PropIntegerMoments}. This is valid since the two conditions $2s^2(2k-\lceil k \rceil)>\beta$ and $4\lceil k-1\rceil(k-\lceil k-1\rceil)s^2>\beta$ exactly imply the conditions $2s(2\delta-(m+1)s)>\beta$ and $4m(\delta-m s)s>\beta$ required in Proposition \ref{PropIntegerMoments}. This concludes the proof.
\end{proof}

We now return to the proof of Proposition \ref{CartwrightProp}.

\begin{proof}[Proof of Proposition \ref{CartwrightProp}] The integral condition on the real line follows immediately from Corollary \ref{BoundLimitingEntire}, since for $0 < s\le2\delta$,
\begin{equation*}
\mathbb{E}\left[\log_+ \left|\tilde{\boldsymbol{\xi}}^{\beta,\delta}(x)\right|\right] \leq c_s \mathbb{E}\left[\left|\tilde{\boldsymbol{\xi}}^{\beta,\delta}(x)\right|^{s}\right]\le \frac{C c_s}{1+|x|^{(2\delta-s)s/\beta}} \le C c_s\ \ \forall x\in \mathbb{R},
\end{equation*}
for $c_s > 0$ depending only on $s$. 
To get the exponential bound for all $z\in \mathbb{C}$, suppose we could show that for some $\mathsf{c},\tilde{\mathsf{c}}>0$, we have for all $k\in \mathbb{N}$,
\begin{equation}\label{ProbBound}
\mathbb{P}\left(\max_{|z|\le k}\left|\tilde{\boldsymbol{\xi}}^{\beta,\delta}(z)\right|\ge 2^{\mathsf{c}k}\right)\le 2^{-\tilde{\mathsf{c}}k}.
\end{equation}
Then, by the Borel-Cantelli lemma, the conclusion would follow. Let $s \in (0,2\delta]$. We have
\begin{align*}
\mathbb{P}\left(\max_{|z|\le k}\left|\tilde{\boldsymbol{\xi}}^{\beta,\delta}(z)\right|\ge 2^{\mathsf{c}k}\right)=\mathbb{P}\left(\max_{|z|\le k}\left|\tilde{\boldsymbol{\xi}}^{\beta,\delta}(z)\right|^s\ge 2^{\mathsf{c}sk}\right)\le \frac{\mathbb{E}\left[\max_{|z|\le k} \left|\tilde{\boldsymbol{\xi}}^{\beta,\delta}(z)\right|^s\right]}{2^{\mathsf{c}sk}}.
\end{align*}
Observe that, since $\tilde{\boldsymbol{\xi}}^{\beta,\delta}$ is an entire function, for any $s>0$, $|\tilde{\boldsymbol{\xi}}^{\beta,\delta}|^s$ is subharmonic (this is a consequence of two well-known facts: (a) for holomorphic $z \mapsto h(z)$, $z\mapsto \log|h(z)|$ is subharmonic and (b) the composition of a subharmonic function $g$ with an increasing convex function $\phi$, $\phi\circ g$ is again subharmonic; thus taking $\phi(x)=\exp(sx)$, for any $s>0$, gives the claim). We now require two facts: for a subharmonic function $f$ on $\mathbb{C}$, we have the maximum principle $\max_{|z|\le k} f(z)=\max_{|z|=k}f(z)$ and moreover, if $f$ is non-negative, we have by Poisson's inequality, again for a generic constant $C>0$ used throughout this proof, which may change from line to line,
\begin{align*}
\max_{|z|=k} f(z)&\le \max_{|z|=k}\frac{1}{2\pi}\int_{0}^{2\pi}f\left((k+1)\mathrm{e}^{\mathrm{i}\theta}\right)\frac{(k+1)^2-k^2}{\left|(k+1)\mathrm{e}^{\mathrm{i}\theta}-z\right|^2}\mathrm{d}\theta \le C(k+1)\int_{|z|=k+1}f(z)|\mathrm{d}z|.
\end{align*}
Hence, applying this to $f(\cdot)=|\tilde{\boldsymbol{\xi}}^{\beta,\delta}(\cdot)|^s$ we obtain,
\begin{align*}
\mathbb{P}\left(\max_{|z|\le k}\left|\tilde{\boldsymbol{\xi}}^{\beta,\delta}(z)\right|\ge 2^{\mathsf{c}k}\right) \le\frac{C(k+1)}{2^{\mathsf{c}sk}}\mathbb{E}\left[\int_{|z|=k+1}\left|\tilde{\boldsymbol{\xi}}^{\beta,\delta}(z)\right|^{s}|\mathrm{d}z|\right]\le\frac{C(k+1)^2}{2^{\mathsf{c}sk}}\sup_{|z|=k+1}\mathbb{E}\left[\left|\tilde{\boldsymbol{\xi}}^{\beta,\delta}(z)\right|^s\right].
\end{align*}
For $z=x+\mathrm{i}y$ we have, by virtue of Proposition \ref{LiValkoProp} and Fatou's lemma,
\begin{equation}\label{BoundXiPlane}
\mathbb{E}\left[\left|\tilde{\boldsymbol{\xi}}^{\beta,\delta}(x+\mathrm{i}y)\right|^s\right] \le \mathrm{e}^{ys/2} \liminf_{N\to \infty}\mathbb{E}\left[\left|\mathfrak{q}_N^{\beta,\delta}(\mathrm{e}^{-y/N}\mathrm{e}^{\mathrm{i}x/N})\right|^s\right].
\end{equation}
For $r<1$, with $\mathcal{P}_r(\theta)=(2\pi)^{-1}\sum_{n=-\infty}^\infty r^{|n|}\mathrm{e}^{\mathrm{i}n\theta}$ the normalised Poisson kernel, we have,
\begin{equation}
\mathbb{E}\left[\left|\mathfrak{q}_N^{\beta,\delta}(r\mathrm{e}^{\mathrm{i}\theta})\right|^s\right]\le \int_{-\pi}^\pi \mathcal{P}_{r}(\theta-t) \mathbb{E}\left[\left|\mathfrak{q}_N^{\beta,\delta}(\mathrm{e}^{\mathrm{i}t})\right|^s\right]\mathrm{d}t\le C,
\end{equation}
the final inequality due to the bound in Theorem \ref{MainBound}.
Now, by virtue of rotational invariance of $\textnormal{C}\beta \textnormal{E}_N$ we have, for $r>1$, the functional equation
\begin{equation*}
\left|\mathfrak{q}_N^{\beta,\delta}(r\mathrm{e}^{\mathrm{i}\theta})\right|^s\overset{\mathrm{d}}{=}r^{Ns}\left|\mathfrak{q}_N^{\beta,\delta}(r^{-1}\mathrm{e}^{-\mathrm{i}\theta})\right|^s,
\end{equation*}
and so we get the following bound on the whole complex plane:
\begin{equation*}
  \mathbb{E}\left[\left|\mathfrak{q}_N^{\beta,\delta}(r\mathrm{e}^{\mathrm{i}\theta})\right|^s\right]  \le \begin{cases}
        C, \ \ &r \le 1, \theta \in \mathbb{T},\\
        C r^{Ns}, \ \ &r> 1, \theta \in \mathbb{T}.
    \end{cases}
\end{equation*}
Thus, by plugging in \eqref{BoundXiPlane} we obtain, 
\begin{equation*}
\mathbb{E}\left[\left|\tilde{\boldsymbol{\xi}}^{\beta,\delta}(z)\right|^s\right] \le C\mathrm{e}^{s|z|/2}, \ \ \ \forall z \in \mathbb{C}.
\end{equation*}
Putting everything together we get, 
\begin{equation*}
\mathbb{P}\left(\max_{|z|\le k}\left|\tilde{\boldsymbol{\xi}}^{\beta,\delta}(z)\right|\ge 2^{\mathsf{c}k}\right)\le C(k+1)^22^{sk(1/(2\log 2)-\mathsf{{c}})}.
\end{equation*}
Hence, picking $\mathsf{c}>1/(2 \log 2)$ (and appropriately $\tilde{\mathsf{c}}>0$ small enough) gives \eqref{ProbBound} and completes the proof.

\end{proof}

\begin{proof} [Proof of Theorem \ref{MainThm1}]
This follows immediately by computing the asymptotics of the formula in Proposition \ref{MoMrep} by virtue of Lemma \ref{AsymptoticsPartition} and Proposition \ref{PropNonInteger} (note that for $k\in \mathbb{N}$, $k\ge 2$, both conditions are implied by $2ks^2>\beta$) and finally recalling Proposition \ref{PropPVRep}.
\end{proof}

\begin{proof} [Proof of Theorem \ref{MainThm2}]
  We combine the expression given in Proposition \ref{CbetaECorr} with Lemma \ref{NormCorrAsymptotics} and Proposition \ref{PropAsymptCorr}, and we use the uniform bound \eqref{UniformBoundCorr} in order to apply 
  dominated convergence. In this way, we obtain 
the value of the limit
\begin{equation*}
\lim_{N\to \infty} \frac{1}{N^m}\int_{\mathbb{R}^m}\boldsymbol{\rho}_{N,\beta}^{(m)}\left(\frac{x_1}{N},\dots,\frac{x_m}{N}\right)\mathsf{f}_m(x_1,\dots,x_m)\mathrm{d}x_1\cdots\mathrm{d}x_m,
\end{equation*}
for any continuous function $\mathsf{f}_m$ of compact support on $\mathbb{R}^m$.
Combining the explicit expression of this limit with 
 Proposition \ref{PropPVRep} and  Proposition \ref{LimitOfCorr}, we deduce 
\begin{align*}
&\int_{\mathbb{R}^m}\boldsymbol{\rho}_\beta^{(m)}(x_1,\dots,x_m)\mathsf{f}_m(x_1,\dots,x_m)\mathrm{d}x_1\cdots\mathrm{d}x_m\\
&=\mathfrak{C}_{\beta}^{(m)} \int_{\mathbb{R}^m}\prod_{1\le i<j \le m}\left|x_i-x_j\right|^\beta \mathbb{E}\left[\prod_{j=2}^m \left|\boldsymbol{\xi}^{\beta,m\beta/2}\left(x_j-x_1\right)\right|^\beta\right]\mathsf{f}_m(x_1,\dots,x_m)\mathrm{d}x_1\cdots\mathrm{d}x_m.
\end{align*}
Since such $\mathsf{f}_m$ form a determining class we obtain expression \eqref{CorrFormula}. Then, uniform boundedness follows immediately by plugging into \eqref{CorrFormula} the bound from Corollary \ref{BoundLimitingEntire}.

To show continuity of \eqref{CorrFnFn} we only need some uniform integrability to get convergence of moments (since almost surely $z\mapsto \boldsymbol{\xi}^{\beta,\delta}(z)$ is continuous, in fact entire). It suffices to have, with some $r>\beta$, for all $R>0$,
\begin{align*}
\sup_{(x_1,\dots,x_{m-1})\in [-R,R]^{m-1}}\mathbb{E}\left[\prod_{j=1}^{m-1}\left|\boldsymbol{\xi}^{\beta,m\beta/2}(x_j)\right|^r\right]<\infty.
\end{align*}
Picking $r$ such that $\beta<r\le\frac{m}{m-1}\beta$, this follows from Corollary \ref{BoundLimitingEntire} and Proposition \ref{PropPVRep}.

We finally show that \eqref{CorrFnFn} is strictly positive for all $(x_1,\dots,x_m)\in \mathbb{R}^m$. Suppose otherwise. Then, by virtue of representation \eqref{PVrep} of $\boldsymbol{\xi}^{\beta,m\beta/2}$ there must exist a deterministic $y\in \mathbb{R}$ such that with positive probability $q$, the point process $\mathcal{HP}_{\beta,m\beta/2}$ has a point at $y$. Hence, for all $\epsilon >0$,
\begin{equation*}
q \le \mathbb{E}\left[\mathcal{HP}_{\beta,m\beta/2}\left[y-\epsilon,y+\epsilon\right]\right]=\int_{y-\epsilon}^{y+\epsilon}\boldsymbol{\rho}_{\beta,m\beta/2}^{(1)}(x)\mathrm{d}x \le 2\epsilon \sup_{x\in [y-1,y+1] } \boldsymbol{\rho}_{\beta,m\beta/2}^{(1)}(x),
\end{equation*}
where $\boldsymbol{\rho}_{\beta,m\beta/2}^{(1)}$ is the first correlation function of $\mathcal{HP}_{\beta,m\beta/2}$ which is known to be continuous (we only need that is bounded), see \cite{qu2025pair}, and this gives a contradiction by taking $\epsilon$ small enough.
\end{proof}

\section{Proof of the main bound}\label{SectionMainBound}
We will need several preliminaries. We consider the 
family $(\Phi_k, \Phi^*_k)_{k \geq 0}$, 
of orthogonal polynomials on the unit circle $\mathbb{T}$
corresponding to the sequence $(\boldsymbol{\alpha}_j)_{j \geq 0}$ of  random Verblunsky coefficients: these coefficients being independent, invariant in distribution by multiplication by complex numbers of modulus one, $|\boldsymbol{\alpha}_j|^2$ being 
distributed as a beta random variable with parameters $1$ and $(\beta/2)(j+1)$, namely with density for $x\in[0,1]$ given by,
\begin{equation*}
\frac{\beta(j+1)}{2}(1-x)^{\frac{\beta}{2}(j+1)-1}.
\end{equation*}
We have $\Phi_0 = \Phi^*_0 = 1$ and the Szeg\"o recursion, for $k\ge 0$: 
$$\Phi_{k+1} (z) = z \Phi_k(z) - \overline{\boldsymbol{\alpha}_k} \Phi^*_k(z),$$
$$\Phi^*_{k+1} (z) = \Phi^*_k (z) - z \boldsymbol{\alpha}_k \Phi_k(z). $$
Killip and Nenciu \cite{KillipNenciu}
 have proven that for all $N \geq 1$, 
$\theta\mapsto \mathsf{X}_N(\mathrm{e}^{\mathrm{i}\theta})$ has the same distribution as 
$$\theta \mapsto \Phi^*_{N-1} (\mathrm{e}^{\textnormal{i}\theta })- \mathrm{e}^{\textnormal{i}\theta } \boldsymbol{\eta} \Phi_{N-1} (\mathrm{e}^{\textnormal{i}\theta })$$
where $\boldsymbol{\eta}$ is an independent, uniform random variable on $\mathbb{T}$.
Moreover, for example by Lemma 2.3 of \cite{ChhaibiMadauleNajnudel},  we have for all 
$k \geq 0$, 
$$\Phi_k^*(\mathrm{e}^{\textnormal{i} \theta}) 
= \prod_{j=0}^{k-1} (1 -\boldsymbol{\alpha}_j \mathrm{e}^{\textnormal{i}\Psi_j(\theta)}),$$
where 
$$\Psi_{j} (\theta) = (j+1) \theta  -  
2 \sum_{r = 0}^{j-1} \Im \log \left(1 - \boldsymbol{\alpha}_r \mathrm{e}^{\textnormal{i} \Psi_r(\theta)}\right),$$
taking the principal branch of the logarithm in the last expression.

For $j \geq 0$, we denote by $\mathcal{F}_j$ the $\sigma$-algebra generated by the $(\boldsymbol{\alpha}_r)_{0 \leq r \leq j-1}$.

We have the following bound on fluctations of the variations of $\Psi_j$: 
\begin{prop} \label{boundlargefluctuationpsi}
For $j, \ell \geq 0$, $\theta \in \mathbb{R}$, $t > 0$, 
$$\mathbb{P} \left(| \Psi_{j+\ell} (\theta) - \Psi_j (\theta) - \ell \theta | \geq t | \mathcal{F}_j \right)
\leq 2 \exp \left( - \frac{t^2 \beta}{8 \log \left(1 + \frac{\beta \ell}{1 + \beta j} \right)}\right).$$
\end{prop}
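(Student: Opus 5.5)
We write the increment as a martingale. From the recursion,
\begin{equation*}
\Psi_{j+\ell}(\theta)-\Psi_j(\theta)-\ell\theta=-2\sum_{r=j}^{j+\ell-1}\Im\log\left(1-\boldsymbol{\alpha}_r\mathrm{e}^{\mathrm{i}\Psi_r(\theta)}\right)=:\sum_{r=j}^{j+\ell-1}\mathsf{D}_r .
\end{equation*}
Here $\Psi_r(\theta)$ is $\mathcal{F}_r$-measurable, while $\boldsymbol{\alpha}_r$ is independent of $\mathcal{F}_r$ and rotation invariant. Hence, conditionally on $\mathcal{F}_r$, the variable $\boldsymbol{\alpha}_r\mathrm{e}^{\mathrm{i}\Psi_r(\theta)}$ has the law of $\boldsymbol{\alpha}_r$. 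So $\mathsf{D}_r$ is independent of $\mathcal{F}_r$ and distributed as $-2\Im\log(1-\boldsymbol{\alpha}_r)$. By the symmetry $\boldsymbol{\alpha}_r\mapsto\overline{\boldsymbol{\alpha}_r}$, this law is symmetric, so the $\mathsf{D}_r$ form a martingale difference sequence with respect to $(\mathcal{F}_{r+1})$. In fact they are i.i.d.-structured: conditionally on $\mathcal{F}_j$, the sequence $(\mathsf{D}_r)_{r\ge j}$ consists of independent variables with the laws above. This can be seen inductively, because each conditional law is fixed and does not depend on the past.

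The plan is a Chernoff bound. We will show that for every $\lambda\in\mathbb{R}$,
\begin{equation*}
\mathbb{E}\left[\mathrm{e}^{\lambda \mathsf{D}_r}\right]\le \exp\left(\frac{2\lambda^2}{\beta(r+1)+1}\right)\quad\text{or a similar bound of the form } \exp\left(\lambda^2 \sigma_r^2/2\right),
\end{equation*}
where the variance proxies satisfy $\sum_{r=j}^{j+\ell-1}\sigma_r^2\le \frac{4}{\beta}\log\left(1+\frac{\beta\ell}{1+\beta j}\right)$. Iterating the tower property over $r=j+\ell-1,\dots,j$ then gives the conditional bound $\mathbb{E}[\mathrm{e}^{\lambda\sum \mathsf{D}_r}\mid\mathcal{F}_j]\le \exp(\lambda^2 S/2)$ with $S=\frac{4}{\beta}\log(1+\frac{\beta\ell}{1+\beta j})$. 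Optimizing over $\lambda$ and using both tails yields $2\exp(-t^2/(2S))=2\exp\left(-\frac{t^2\beta}{8\log(1+\beta\ell/(1+\beta j))}\right)$, which is exactly the claim. For the sum, the natural comparison is
\begin{equation*}
\sum_{r=j}^{j+\ell-1}\frac{4}{\beta(r+1)+1}\le \frac{4}{\beta}\int_j^{j+\ell}\frac{\beta\,\mathrm{d}x}{\beta x+1}=\frac{4}{\beta}\log\frac{1+\beta(j+\ell)}{1+\beta j},
\end{equation*}
using $\beta(r+1)+1\ge \beta x+1$ for $x\in[r,r+1]$. So we need $\sigma_r^2=\frac{4}{\beta(r+1)+1}$, or anything smaller.

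The main obstacle is the single-step sub-Gaussian estimate. Write $\boldsymbol{\alpha}_r=\sqrt{\mathsf{B}}\,\mathrm{e}^{\mathrm{i}\phi}$ with $\phi$ uniform and $\mathsf{B}\sim\mathrm{Beta}(1,\beta(r+1)/2)$. Then $-2\Im\log(1-\sqrt{\mathsf{B}}\mathrm{e}^{\mathrm{i}\phi})$ is $2\arg$ of $(1-\sqrt{\mathsf B}\,\mathrm e^{\mathrm i\phi})^{-1}$, which is bounded by $\pi$ in absolute value. Its variance has the clean expression $\mathbb{E}[\mathsf{D}_r^2]=2\,\mathbb{E}[\mathrm{Li}_2(\mathsf{B})]$, obtained from the Fourier series $\Im\log(1-\rho\mathrm{e}^{\mathrm{i}\phi})=-\sum_n \rho^n\sin(n\phi)/n$. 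We then need
\begin{equation*}
\mathbb{E}[\mathrm{Li}_2(\mathsf{B})]=\sum_n \frac{\mathbb{E}[\mathsf{B}^n]}{n^2}\le\frac{2}{\beta(r+1)+2},
\end{equation*}
using $\mathbb{E}[\mathsf{B}^n]=n!/\prod_{i=1}^n(1+\beta(r+1)/2+i-1)$. To get a true moment generating function bound rather than just a variance bound, we first condition on $\mathsf{B}$. Given $\mathsf{B}=b$, the variable $\Im\log(1-\sqrt b\,\mathrm{e}^{\mathrm{i}\phi})$ is a bounded symmetric function of a uniform angle. We would bound its exponential moments by expanding $\mathbb{E}[\mathrm{e}^{\lambda \mathsf{D}}]=\sum_k \lambda^{2k}\mathbb{E}[\mathsf{D}^{2k}]/(2k)!$ and controlling the even moments via the Fourier/harmonic structure. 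Specifically, $\mathrm{e}^{\lambda\cdot 2\arg(1-w)^{-1}}$ relates to $|1-w|^{\cdots}$ type integrals, since $\arg$ is the harmonic conjugate of $\log|1-w|$. This suggests computing exactly, for imaginary $\lambda$, $\mathbb{E}_\phi[(1-\sqrt b\mathrm{e}^{\mathrm{i}\phi})^{a}(1-\sqrt b\mathrm{e}^{-\mathrm{i}\phi})^{\bar a}]={}_2F_1$-type expressions in $b$, and then analytically continuing. If that becomes messy, the fallback is a Hoeffding/Bennett-type argument: use the boundedness $|\mathsf{D}_r|\le\pi$ together with the variance bound above. That would give the same Gaussian tail, but only after checking that the constants still fit the factor $8$ in the statement.
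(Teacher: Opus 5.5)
Your outline is the paper's proof. Conditionally on $\mathcal{F}_j$, the increments $\mathsf{D}_r$ are independent with the law of $-2\Im\log(1-\boldsymbol{\alpha}_r)$. One then needs a one-step bound $\mathbb{E}[\mathrm{e}^{\lambda\mathsf{D}_r}]\le\exp(2\lambda^2/(1+\beta(r+1)))$ for all real $\lambda$, the comparison of the sum with $\frac{1}{\beta}\log\frac{1+\beta(j+\ell)}{1+\beta j}$, and Chernoff with $\lambda=\beta t/(4L)$, $L=\log(1+\frac{\beta\ell}{1+\beta j})$. All of that, constants included, is correct.

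The gap is that the one-step sub-Gaussian bound, which is the whole content of the proposition (the paper imports it from Proposition 2.5 of \cite{ChhaibiMadauleNajnudel}), is not proved, and the routes you sketch do not deliver it.
\begin{itemize}
\item Your inequality $\mathbb{E}[\mathrm{Li}_2(\mathsf{B})]\le 2/(\beta(r+1)+2)$ is false. The $n=1$ term alone equals $\mathbb{E}[\mathsf{B}]=2/(2+\beta(r+1))$, and every other term is positive. What you need, and what holds, is the bound $2/(\beta(r+1)+1)$.
\item The Hoeffding/Bennett fallback fails for a structural reason, not because of constants. From $|\mathsf{D}_r|<\pi$ and a variance bound, Hoeffding gives a proxy of order $\pi^2$, which does not decay in $r$. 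Bennett--Bernstein gives only $\exp(-t^2/(2(S+\pi t/3)))$, which is exponential rather than Gaussian once $t\gtrsim S$. The statement is claimed for all $t>0$. Moreover, the regime $t\gg S$ is exactly the one used in Proposition \ref{boundconditionalmoment}: there $t\asymp\sqrt{(\ell/j)\log(j/\ell)}$ while $S\asymp \ell/j$ (constants depending on $\beta$), so the Chernoff parameter $\lambda=\beta t/(4L)$ is large.
\item Conditioning on $\mathsf{B}$ first does not rescue this. Any bound $\mathbb{E}[\mathrm{e}^{\lambda\mathsf{D}_r}\mid\mathsf{B}]\le\mathrm{e}^{\lambda^2 v(\mathsf{B})}$ valid for all $\lambda$ forces $v(\mathsf{B})\ge\mathrm{Li}_2(\mathsf{B})$, which does not vanish near $\mathsf{B}=1$. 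Averaging over $\mathsf{B}$ then gives something at least of order $\mathrm{e}^{c\lambda^2}$ for large $\lambda$, with $c>0$ independent of $r$.
\end{itemize}

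The missing step is short if you integrate over the whole disc at once instead of conditioning on $\mathsf{B}$; this makes your ${}_2F_1$ idea exact. Put $b=\beta(r+1)/2$, so that $\mathbb{E}|\boldsymbol{\alpha}_r|^{2k}=k!/(b+1)_k$. Expand the principal powers and use rotation invariance to kill off-diagonal terms; the exchange is justified since $|\binom{\mathrm{i}\lambda}{k}|^2=O(k^{-2})$, or by the $\rho\uparrow 1$ argument in the proof of Proposition \ref{boundconditionalmoment}. Then Gauss's summation theorem and the Weierstrass product for $1/\Gamma$ give, for every $\lambda\in\mathbb{R}$,
\[
\mathbb{E}\bigl[\mathrm{e}^{\lambda\mathsf{D}_r}\bigr]=\mathbb{E}\bigl[(1-\boldsymbol{\alpha}_r)^{\mathrm{i}\lambda}(1-\overline{\boldsymbol{\alpha}_r})^{-\mathrm{i}\lambda}\bigr]=\sum_{k\ge0}\frac{(\mathrm{i}\lambda)_k(-\mathrm{i}\lambda)_k}{k!\,(b+1)_k}=\frac{\Gamma(b+1)^2}{|\Gamma(b+1+\mathrm{i}\lambda)|^2}=\prod_{n\ge0}\Bigl(1+\frac{\lambda^2}{(b+1+n)^2}\Bigr).
\]
Use $1+x\le\mathrm{e}^x$ and, by convexity, $\sum_{n\ge0}(b+1+n)^{-2}\le\int_{b+1/2}^\infty u^{-2}\,\mathrm{d}u=(b+1/2)^{-1}$. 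This gives $\mathbb{E}[\mathrm{e}^{\lambda\mathsf{D}_r}]\le\exp(2\lambda^2/(1+\beta(r+1)))$, which is exactly your $\sigma_r^2=4/(1+\beta(r+1))$. The rest of your argument then goes through unchanged.

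Comparing $\lambda^2$ coefficients in the display also shows $\mathbb{E}[\mathrm{Li}_2(\mathsf{B})]=\sum_{n\ge0}(b+1+n)^{-2}$. This lies strictly between $1/(b+1)$ and $1/(b+1/2)$, which confirms the correction to your variance bound above.
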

\begin{proof}
By rotational invariance and independence of the Verblunsky coefficients, 
$$\Psi_{j+\ell}(\theta) - \Psi_j (\theta) - \ell \theta = - 2 \sum_{r = j}^{j+ \ell - 1} 
\Im \log ( 1 - \boldsymbol{\alpha}_r) $$
in distribution, and is independent of 
$\mathcal{F}_j$. 
By applying Proposition 2.5 of \cite{ChhaibiMadauleNajnudel}, we get, for all $\lambda \in \mathbb{R}$, 
$$\mathbb{E} \left[ \mathrm{e}^{\lambda (\Psi_{j+\ell}(\theta) - \Psi_j (\theta) - \ell \theta)} \big| \mathcal{F}_j\right] 
\leq \exp \left( 2 \lambda^2 \sum_{r = j}^{j+\ell - 1} \frac{1}{1 + \beta(r+1)}\right).
$$
Adding these estimates for $\lambda$ and $- \lambda$ gives, for all $\lambda \geq 0$, 
\begin{align*} \mathbb{E} \left[ \mathrm{e}^{\lambda |\Psi_{j+\ell}(\theta) - \Psi_j (\theta) - \ell \theta)|} \big| \mathcal{F}_j\right] & 
\leq 2 \exp \left( 2 \lambda^2 \sum_{r = j}^{j+\ell - 1} \frac{1}{1 + \beta(r+1)}\right)
\\ & \leq 2 \exp \left( 2 \lambda^2 \int_{j-1}^{j+\ell - 1} \frac{\mathrm{d}t}{1 + \beta(t+1)} \right)
\\ & = 2 \exp \left( 2 \lambda^2 \beta^{-1} \log \left( 
\frac{1 + \beta(j+\ell)}{1 + \beta j} \right) \right).
\end{align*}
Using the Chernoff bound, we get 
$$\mathbb{P} \left(| \Psi_{j+\ell} (\theta) - \Psi_j (\theta) - \ell \theta | \geq t | \mathcal{F}_j \right)
\leq 2 \exp \left(- \lambda t +  2 \lambda^2 \beta^{-1}\log \left( 1 + 
\frac{ \beta \ell}{1 + \beta j} \right) \right).$$
Finally, taking 
$$\lambda =\frac{ \beta t}{4  \log \left( 1 + 
\frac{ \beta \ell}{1 + \beta j} \right) } $$
gives the desired result. 

\end{proof}

Now, we obtain a bound for the conditional on $\mathcal{F}_j$, with $j\ge \ell$, moment of the product $|\Phi_{j+\ell}^*(\mathrm{e}^{\mathrm{i}\theta_1})|^{2s_1}\cdots |\Phi_{j+\ell}^*(\mathrm{e}^{\mathrm{i}\theta_p})|^{2s_p}$. This will be used as seed for iteration in Proposition \ref{boundconditionalmomentsdoubling}.
\begin{prop} \label{boundconditionalmoment}
For $p \geq 2$, let $(\theta_m)_{1 \leq m \leq p}\in \mathbb{R}^p$, $(s_m)_{1 \leq m \leq p}\in\mathbb{R}_+^p$, 
and $j,\ell \in \mathbb{N}\cup \{0\}$ so that $j \geq \ell$.  
Then, 
\begin{align*}
& \mathbb{E} \left[ \prod_{m=1}^p \left|\Phi^*_{j+\ell} (\mathrm{e}^{\textnormal{i} \theta_m})
\right|^{2s_m} \;  \bigg| \mathcal{F}_j \right]
\\ & \leq \prod_{m=1}^p \left|\Phi^*_{j} (\mathrm{e}^{\textnormal{i} \theta_m})
\right|^{2s_m} \, \exp \left[ 
\frac{2 \ell}{\beta (j+1)}  
 \sum_{m=1}^{p} s_m^2 
+ \frac{K}{j+1} \min \left( \ell, \left(\min_{1 \leq m_1 \neq m_2 \leq p} 
||\theta_{m_1} - \theta_{m_2}|| \right)^{-1} \right)  \dots \right.
\\ & \left.  \dots
+ K \left(\frac{\ell}{j+1} \right)^{3/2}    \left(1 + \sqrt{\log \left(\frac{1+j}{1 + \ell} \right) } \right)   
\right],
\end{align*}
where $K > 0$ depends only on $\beta, p$
and on the sequence $(s_m)_{1 \leq m \leq p}$, and $||a||$ denotes the distance between $a$ and the set $2 \pi \mathbb{Z}$. 
\end{prop}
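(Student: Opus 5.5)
We will prove the bound by writing the ratio $\prod_m |\Phi^*_{j+\ell}(\mathrm{e}^{\mathrm{i}\theta_m})|^{2s_m}/\prod_m|\Phi^*_{j}(\mathrm{e}^{\mathrm{i}\theta_m})|^{2s_m}$ as a product over $r=j,\dots,j+\ell-1$ of the factors $\prod_m |1-\boldsymbol{\alpha}_r\mathrm{e}^{\mathrm{i}\Psi_r(\theta_m)}|^{2s_m}$, and conditioning successively on $\mathcal{F}_{j+\ell-1},\mathcal{F}_{j+\ell-2},\dots$. The one-step conditional expectation is explicit: given $\mathcal{F}_r$, the angles $\Psi_r(\theta_m)$ are fixed and $\boldsymbol{\alpha}_r$ is rotation invariant with $\mathbb{E}|\boldsymbol{\alpha}_r|^2=\frac{2}{2+\beta(r+1)}$, $\mathbb{E}|\boldsymbol{\alpha}_r|^4=O(r^{-2})$. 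Expanding $\log|1-\alpha \mathrm{e}^{\mathrm{i}\psi}|^{2s}=-2s\Re(\alpha\mathrm{e}^{\mathrm{i}\psi})-s\Re(\alpha^2\mathrm{e}^{2\mathrm{i}\psi})+O(|\alpha|^3)$ and then exponentiating, the linear terms average to zero by rotation invariance, the $\alpha^2$ terms also vanish in mean, and the surviving second-order term is $2|\alpha|^2\big|\sum_m s_m\mathrm{e}^{\mathrm{i}\Psi_r(\theta_m)}\big|^2$ averaged over the phase, i.e.
\begin{equation*}
\mathbb{E}\Big[\prod_m|1-\boldsymbol{\alpha}_r\mathrm{e}^{\mathrm{i}\Psi_r(\theta_m)}|^{2s_m}\,\Big|\,\mathcal{F}_r\Big]\le \exp\Big(\frac{2}{\beta(r+1)}\Big[\sum_m s_m^2+2\sum_{m_1<m_2}s_{m_1}s_{m_2}\cos\big(\Psi_r(\theta_{m_1})-\Psi_r(\theta_{m_2})\big)\Big]+\frac{K}{(r+1)^{3/2}}\Big).
\end{equation*}
(Care is needed for large $|\boldsymbol{\alpha}_r|$ near $1$, but since all $s_m\ge0$ the factor is bounded by $2^{2\sum s_m}$ and the tail of the beta law is $O(\mathrm{e}^{-cr})$-small, so this only affects the $O(r^{-3/2})$ error; for $r\le j$ the factor $(r+1)^{-1}\le (j+1)^{-1}$ gives the first term $\frac{2\ell}{\beta(j+1)}\sum s_m^2$ after summing over $\ell$ steps.)

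The key issue is the cross terms $\sum_{r=j}^{j+\ell-1}\frac{1}{r+1}\cos(\Psi_r(\theta_{m_1})-\Psi_r(\theta_{m_2}))$, which are random and depend on the path. The plan is to replace $\Psi_r(\theta)$ by its deterministic drift: write $\Psi_r(\theta_{m_1})-\Psi_r(\theta_{m_2})=\Delta_j+(r-j)(\theta_{m_1}-\theta_{m_2})+E_r$, where $\Delta_j$ is $\mathcal{F}_j$-measurable and $E_r$ is the fluctuation controlled by Proposition \ref{boundlargefluctuationpsi}. The deterministic part gives a geometric sum $\sum_{r}\cos(\Delta_j+(r-j)\phi)$ with $\phi=\theta_{m_1}-\theta_{m_2}$, bounded by $\min(\ell, C\|\phi\|^{-1})$, and since $1/(r+1)\le 1/(j+1)$ with variation $O(\ell/(j+1)^2)$ (Abel summation) this yields the term $\frac{K}{j+1}\min(\ell,(\min\|\theta_{m_1}-\theta_{m_2}\|)^{-1})$. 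The error uses $|\cos(a+E)-\cos a|\le|E|$, contributing $\frac{1}{j+1}\sum_{r}|E_r|$; by Proposition \ref{boundlargefluctuationpsi}, $|E_r|$ is sub-Gaussian with variance proxy $\log(1+\beta(r-j)/(1+\beta j))\lesssim \ell/(j+1)$, so typically $\sum_r|E_r|\lesssim \ell\sqrt{\ell/(j+1)}$, matching $K(\ell/(j+1))^{3/2}$.

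The main obstacle is that $E_r$ is random and appears inside the exponent, so we cannot simply take expectations; we need to decouple it from the product of moduli. I would handle it by a change of measure: the iterated conditional expectation bound shows that $\prod_{r}(\text{factor}_r)\exp(-\text{bound}_r)$ is a supermartingale, so the quantity is bounded by $\mathbb{E}^{\mathbb{Q}}[\exp(\frac{C}{j+1}\sum_r|E_r|)]$ under a tilted measure $\mathbb{Q}$ under which the $\boldsymbol{\alpha}_r$ remain independent with tilted but comparable laws, so the sub-Gaussian estimate of Proposition \ref{boundlargefluctuationpsi} still holds with modified constants (alternatively, use Hölder to separate at the cost of slightly inflating the $s_m$, which is harmless for the form of the bound only after some care). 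Then split on the event $\{\max_r|E_r|\le t\}$ with $t\asymp\sqrt{(\ell/(j+1))\log((1+j)/(1+\ell))}$, where the complementary event has probability $\le ((1+\ell)/(1+j))^{c}$, small enough to absorb the crude bound $\exp(C\ell/(j+1))$ on its contribution; this is precisely where the factor $1+\sqrt{\log((1+j)/(1+\ell))}$ arises, and $j\ge\ell$ ensures all the exponents stay $O(1)$.
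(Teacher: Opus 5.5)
Your outline has the same structure as the paper. It uses the second-order one-step bound $1+\frac{2}{\beta(r+1)}\big|\sum_m s_m\mathrm{e}^{\mathrm{i}\Psi_r(\theta_m)}\big|^2+O(r^{-3/2})$; the paper gets the sharper $O(r^{-2})$ from the exact identity $\sum_k|C_k|^2\mathbb{E}[|\boldsymbol{\alpha}_r|^{2k}]$, but your error term suffices. It then replaces $\Psi_r(\theta_m)$ by $\Psi_j(\theta_m)+(r-j)\theta_m$, sums the cross terms as geometric series, invokes Proposition \ref{boundlargefluctuationpsi}, and uses the threshold $t\asymp\sqrt{(\ell/j)\log((1+j)/(1+\ell))}$.

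\textbf{The gap.} The gap is the decoupling step, which you rightly call the crux but do not carry out.

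Any tilted measure $\mathbb{Q}$ of the kind you describe reweights the conditional law of $\boldsymbol{\alpha}_r$ given $\mathcal{F}_r$ by $\prod_m|1-\alpha\,\mathrm{e}^{\mathrm{i}\Psi_r(\theta_m)}|^{2s_m}$. This weight depends on the past through the angles $\Psi_r(\theta_m)$. So under $\mathbb{Q}$ the $\boldsymbol{\alpha}_r$ are neither independent nor rotation invariant. Proposition \ref{boundlargefluctuationpsi} rests on the identity in law $\Psi_{j+\ell}(\theta)-\Psi_j(\theta)-\ell\theta\overset{\mathrm{d}}{=}-2\sum_{r=j}^{j+\ell-1}\Im\log(1-\boldsymbol{\alpha}_r)$, independent of $\mathcal{F}_j$, and it does not transfer ``with modified constants''. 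For $p\ge2$ the increments of $\Psi_r(\theta_{m'})$ even acquire under $\mathbb{Q}$ a conditional drift of order $1/r$, to leading order proportional to $\sum_m s_m\sin(\Psi_r(\theta_{m'})-\Psi_r(\theta_m))$. A separate concentration argument under $\mathbb{Q}$ would therefore be required.

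Your H\"older alternative, taken literally (inflate the $s_m$ by a fixed factor), multiplies the leading term $\frac{2\ell}{\beta(j+1)}\sum_m s_m^2$ by the H\"older exponent. The statement requires the exact factor $2/\beta$; downstream it becomes $\frac{2\log 2}{\beta}\sum_m s_m^2$ in Proposition \ref{boundconditionalmomentsdoubling} and then the exponent of $N$. That route can only work with an exponent $1+O(\sqrt{\ell/(j+1)})$, together with a check that both H\"older factors cost only $\exp(O((\ell/(j+1))^{3/2}))$. That quantitative verification is exactly what is missing. Separately, controlling the event $\{\max_r|E_r|\le t\}$ needs Doob's maximal inequality, not a union bound over $\ell$ indices.

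\textbf{The paper's device.} The paper never puts a random quantity in an exponent. It does the phase replacement inside a single step, in additive form: for any $t>0$,
\[
\mathbb{E}\Big[\prod_{m}\big|1-\boldsymbol{\alpha}_{j+\ell-1}\mathrm{e}^{\mathrm{i}\Psi_{j+\ell-1}(\theta_m)}\big|^{2s_m}\,\Big|\,\mathcal{F}_{j+\ell-1}\Big]\le 1+\frac{2}{\beta(j+\ell)}\Big|\sum_{m} s_m\mathrm{e}^{\mathrm{i}(\Psi_j(\theta_m)+(\ell-1)\theta_m)}\Big|^2+\frac{K(t+\mathbf{1}_{A_\ell})}{j+\ell}+\frac{K}{(j+\ell)^2}.
\]
Here $A_\ell$ is the event that $|\Psi_{j+\ell-1}(\theta_m)-\Psi_j(\theta_m)-(\ell-1)\theta_m|\ge t$ for some $m$. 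Every term except the indicator is $\mathcal{F}_j$-measurable.

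Multiply by $\prod_m|\Phi^*_{j+\ell-1}(\mathrm{e}^{\mathrm{i}\theta_m})|^{2s_m}$ and condition on $\mathcal{F}_j$. Only the indicator term then needs decoupling, and the paper handles it by Cauchy--Schwarz. It combines the tail bound of Proposition \ref{boundlargefluctuationpsi} with the crude a priori estimate
\[
\mathbb{E}\Big[\prod_m|\Phi^*_{j+\ell-1}(\mathrm{e}^{\mathrm{i}\theta_m})|^{4s_m}\,\Big|\,\mathcal{F}_j\Big]\le\Big(\frac{j+\ell+1}{j+1}\Big)^K\prod_m|\Phi^*_j(\mathrm{e}^{\mathrm{i}\theta_m})|^{4s_m}.
\]
This estimate comes from the same one-step bound with the indicator replaced by $1$, and it is $O(1)$ because $\ell\le j$. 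Doubling the exponents in this error term changes only $K$, not the constant $2/\beta$.

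The result is a linear recursion in $\ell$ with $\mathcal{F}_j$-measurable coefficients, which iterates to a product. The choice $t=\sqrt{8\ell\log((1+j)/(1+\ell))/(\beta j)}$ and the geometric sums then finish the proof. The tail bound is used one index at a time, so no maximal inequality is needed.
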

\begin{proof}
We have, for 
$\ell \geq 1$, 
$$\mathbb{E} \left[ \prod_{m=1}^p \left|\Phi^*_{j+\ell} (\mathrm{e}^{\textnormal{i} \theta_m})
\right|^{2s_m} \;  \bigg| \mathcal{F}_{j+ \ell - 1} \right]
= \prod_{m=1}^p \left|\Phi^*_{j+\ell-1} (\mathrm{e}^{\textnormal{i} \theta_m})
\right|^{2s_m} \; 
\mathbb{E} \left[ \prod_{m=1}^p \left| 1 - \boldsymbol{\alpha}_{j+\ell-1} \mathrm{e}^{\textnormal{i} \Psi_{j+ \ell - 1} (\theta_m) }\right|^{2s_m} \bigg| \mathcal{F}_{j+ \ell-1} \right].
$$
If we take the principal branch of the powers
and use Newton's expansion, we get, for any $\rho \in (0,1)$,  
$$
\prod_{m=1}^p ( 1 - \rho \boldsymbol{\alpha}_{j+\ell-1} \mathrm{e}^{\textnormal{i} \Psi_{j+ \ell - 1} (\theta_m) })^{s_m} 
= \sum_{k=0}^{\infty} C_k \rho^k \boldsymbol{\alpha}^k_{j+\ell-1}
$$
where 
$$C_k = \sum_{(r_m)_{1 \leq m \leq p} \in
\{0,1,\dots, k\}^p, r_1 + \dots + r_p = k}
\prod_{m=1}^{p} (-\mathrm{e}^{\textnormal{i} \Psi_{j+ \ell - 1} (\theta_m) })^{r_m}
{s_m \choose r_m}.
$$
We have, for all $r \geq s_m \geq 0$, 
$$ \left|{s_m \choose {r+1}} \right|
=\left| {s_m \choose r} \right|\frac{r - s_m}{r+1} \leq \left| {s_m \choose r} \right|,$$
which shows that ${s_m \choose r} $ is bounded 
for fixed $s_m \geq 0$. 
We deduce that each term in the sum giving $C_k$ has a modulus bounded by a quantity $D > 0$ depending only on the sequence $(s_m)_{1 \leq m \leq p}$, and that 
$$|C_k| \leq D (k+1)^p. $$
Expanding the squared modulus gives: 
$$\prod_{m=1}^p \left| 1 - \rho \boldsymbol{\alpha}_{j+\ell-1} \mathrm{e}^{\textnormal{i} \Psi_{j+ \ell - 1} (\theta_m) }\right|^{2s_m} = 
\sum_{k_1, k_2 = 0}^{\infty}
C_{k_1} \overline{C_{k_2}} \rho^{k_1 + k_2}
\boldsymbol{\alpha}^{k_1}_{j+\ell-1} \overline{\boldsymbol{\alpha}_{j+\ell-1}^{k_2}},
$$
the double sum being absolutely convergent, uniformly in $\boldsymbol{\alpha}_{j+\ell-1}$ on the unit disc.  
From rotational invariance of the distribution of  $\boldsymbol{\alpha}_{j+\ell-1}$ and independence 
with $\mathcal{F}_{j+\ell-1}$, we get 
$$\mathbb{E} \left[ \prod_{m=1}^p \left| 1 - \rho \boldsymbol{\alpha}_{j+\ell-1} \mathrm{e}^{\textnormal{i} \Psi_{j+ \ell - 1} (\theta_m) }\right|^{2s_m} \, \bigg| \mathcal{F}_{j+ \ell-1} \right] 
= \sum_{k=0}^{\infty} 
|C_k|^2 \rho^{2k} \mathbb{E}[ |\boldsymbol{\alpha}_{j+\ell-1} |^{2k} ].$$
By dominated convergence for $\rho$ tending to $1$ from below, 
$$\mathbb{E} \left[ \prod_{m=1}^p \left| 1 -  \boldsymbol{\alpha}_{j+\ell-1} \mathrm{e}^{\textnormal{i} \Psi_{j+ \ell - 1} (\theta_m) }\right|^{2s_m} \, \bigg| \mathcal{F}_{j+ \ell-1} \right] 
= \sum_{k=0}^{\infty} 
|C_k|^2 \mathbb{E}[ |\boldsymbol{\alpha}_{j+\ell-1} |^{2k} ],$$
and then 
\begin{align*}\mathbb{E} \left[ \prod_{m=1}^p \left| 1 -  \boldsymbol{\alpha}_{j+\ell-1} \mathrm{e}^{\textnormal{i} \Psi_{j+ \ell - 1} (\theta_m) }\right|^{2s_m} \, \bigg| \mathcal{F}_{j+ \ell-1} \right] 
& = 1 + \frac{1}{ 1 + (\beta/2)(j+\ell)} \left|\sum_{m=1}^{p} s_m \mathrm{e}^{\textnormal{i} \Psi_{j+ \ell - 1} (\theta_m) }\right|^2
\\ & + \mathcal{O} \left( D^2 \sum_{k=2}^{\infty}
(k+1)^{2p} \mathbb{E}[ |\boldsymbol{\alpha}_{j+\ell-1} |^{2k}]
\right)
\end{align*}
since the expectation of 
$|\boldsymbol{\alpha}_{j+\ell-1}|^2$, which is beta-distributed with parameters equal to $1$ and $(\beta/2)(j+\ell)$, is equal to $1/ (1 + (\beta/2)(j+\ell))$. 
Moreover, 
$$\sum_{k=2}^{\infty} (k+1)^{2p}
 \mathbb{E}[ |\boldsymbol{\alpha}_{j+\ell-1} |^{2k}]
 = (\beta/2)(j+\ell) 
 \int_0^1 \left(\sum_{k = 2}^{\infty} x^k (k+1)^{2p}\right)  (1-x)^{(\beta/2)(j+\ell)  - 1}  \mathrm{d}x
$$
where 
$$(1-x)^{-2p-1}
= \sum_{k=0}^{\infty} x^k
\frac{(2p+1)(2p+2) \dots (2p+k)}{k!}
= \sum_{k=0}^{\infty} x^k
\frac{(k+1)(k+2) \dots (k+2p)}{(2p)!},
$$
$$x^2 (1-x)^{-2p-1} = 
\sum_{k=2}^{\infty} x^k
\frac{(k-1)k(k+1) \dots (k+2p-2)}{(2p)!}
\geq \frac{(1/3)(2/3)}{(2p)!}
\sum_{k=2}^{\infty} (k+1)^{2p} x^k
$$
and then
$$\sum_{k=2}^{\infty} (k+1)^{2p}
 \mathbb{E}[ |\boldsymbol{\alpha}_{j+\ell-1} |^{2k}]
 \leq (9 \beta/4) (2p)! (j+\ell) 
 \int_0^{1} x^2 (1 - x)^{(\beta/2)(j+\ell) - 2p-2} \mathrm{d}x.
 $$
If $j + \ell \geq (2/\beta)(2p+2)$, the last integral is finite, and 
we get 
\begin{align*}\sum_{k=2}^{\infty} (k+1)^{2p}
 \mathbb{E}[ |\boldsymbol{\alpha}_{j+\ell-1} |^{2k}]
 & \leq (9 \beta/4) (2p)! (j+\ell)  
 \frac{ \Gamma(3) \Gamma((\beta/2)(j+\ell) - 2p - 1)}{\Gamma((\beta/2)(j+\ell) - 2p +2 )}
 \\ & \leq \frac{ (9 \beta/2) (2p)! (j+\ell)}{
 ((\beta/2)(j+\ell) - 2p -1 )
 ((\beta/2)(j+\ell) - 2p  )((\beta/2)(j+\ell) - 2p +1 )},
 \end{align*}
which, for $j+\ell \geq (2/\beta)(4p + 2)$, 
is bounded, since $(\beta/2)(j+\ell) - 2p-1
\geq (\beta/4)(j+\ell)$ in this case, by
$$\frac{ (9 \beta/2) (2p)! (j+\ell)}{
 [(\beta/4)(j+\ell)]^3 }
 = \frac{288 (2p)!}{\beta^2 (j+\ell)^2}.$$
We deduce, 
for $j+\ell \geq (2/\beta)(4p+2)$, 
\begin{align*}\mathbb{E} \left[ \prod_{m=1}^p \left| 1 -  \boldsymbol{\alpha}_{j+\ell-1} \mathrm{e}^{\textnormal{i} \Psi_{j+ \ell - 1} (\theta_m) }\right|^{2s_m} \, \bigg| \mathcal{F}_{j+ \ell-1} \right] 
& \leq 1 + \frac{2}{ \beta(j+\ell)} \left|\sum_{m=1}^{p} s_m \mathrm{e}^{\textnormal{i} \Psi_{j+ \ell - 1} (\theta_m) }\right|^2
 + \frac{K}{(j+\ell)^2}, 
\end{align*}
where $K > 0$ depends only on $\beta$, $p$ and 
the sequence $(s_m)_{1 \leq m \leq p}$. 
The restriction $j+\ell \geq (2/\beta)(4p+2)$ can trivially be removed by suitably adjusting 
the value of $K$. 
We deduce, after adjusting $K$ again, 
\begin{align*}
& \mathbb{E} \left[ \prod_{m=1}^p \left| 1 -  \boldsymbol{\alpha}_{j+\ell-1} \mathrm{e}^{\textnormal{i} \Psi_{j+ \ell - 1} (\theta_m) }\right|^{2s_m} \, \bigg| \mathcal{F}_{j+ \ell-1} \right] 
 \\ & \leq 1 + \frac{2}{ \beta(j+\ell)} \left|\sum_{m=1}^{p} s_m \mathrm{e}^{\textnormal{i} (\Psi_{j} (\theta_m) + (\ell-1) \theta_m) }\right|^2
 +\\
 & \ \ \frac{K}{j+\ell} \left(
 t + \mathbf{1}_{\exists m \in \{1,\dots,p\},
 |\Psi_{j+\ell-1} (\theta_m) - \Psi_j (\theta_m) - (\ell-1)\theta_m| \geq t} 
 \right) +\frac{K}{(j+\ell)^2} 
 \end{align*}
 for any $t > 0$. 
Multiplying this bound by 
the product of $|\Phi^*_{j+\ell-1}(\mathrm{e}^{\textnormal{i} \theta_m})|^{2s_m}$ 
and taking the conditional expectation given
$\mathcal{F}_j$, we deduce 
\begin{align*}
& \mathbb{E} \left[ \prod_{m=1}^p \left|\Phi^*_{j+\ell} (\mathrm{e}^{\textnormal{i} \theta_m})
\right|^{2s_m} \;  \bigg| \mathcal{F}_j \right]
\\ & \leq 
 \left(1 + \frac{2}{ \beta(j+\ell)} \left|\sum_{m=1}^{p} s_m \mathrm{e}^{\textnormal{i} (\Psi_{j} (\theta_m) + (\ell-1) \theta_m) }\right|^2
 +\frac{Kt}{j+\ell} + \frac{K}{(j+\ell)^2}\right) 
 \mathbb{E} \left[ \prod_{m=1}^p \left|\Phi^*_{j+\ell-1} (\mathrm{e}^{\textnormal{i} \theta_m})
\right|^{2s_m} \;  \bigg| \mathcal{F}_j \right]
\\ & + \frac{K}{j+\ell}
\mathbb{E} \left[
 \prod_{m=1}^p \left|\Phi^*_{j+\ell-1} (\mathrm{e}^{\textnormal{i} \theta_m})
\right|^{2s_m}  
\mathbf{1}_{\exists m \in \{1,\dots,p\},
 |\Psi_{j+\ell-1} (\theta_m) - \Psi_j (\theta_m) - (\ell-1)\theta_m| \geq t} \, \bigg| \mathcal{F}_j
\right].
\end{align*}
Taking $t = 1$ and bounding the indicator function by $1$, we get 
in particular, for a possibly different value of $K$ chosen to be larger than $1$, 
\begin{align*}\mathbb{E} \left[ \prod_{m=1}^p \left|\Phi^*_{j+\ell} (\mathrm{e}^{\textnormal{i} \theta_m})
\right|^{2s_m} \;  \bigg| \mathcal{F}_j \right]
& \leq \left(1 + \frac{K}{j+\ell} \right)
\mathbb{E} \left[ \prod_{m=1}^p \left|\Phi^*_{j+\ell-1} (\mathrm{e}^{\textnormal{i} \theta_m})
\right|^{2s_m} \;  \bigg| \mathcal{F}_j \right]
\\ & \leq \left(1 + \frac{1}{j+\ell} \right)^K
\mathbb{E} \left[ \prod_{m=1}^p \left|\Phi^*_{j+\ell-1} (\mathrm{e}^{\textnormal{i} \theta_m})
\right|^{2s_m} \;  \bigg| \mathcal{F}_j \right],
\end{align*}
which by induction on $\ell$, gives 
$$\mathbb{E} \left[ \prod_{m=1}^p \left|\Phi^*_{j+\ell} (\mathrm{e}^{\textnormal{i} \theta_m})
\right|^{2s_m} \;  \bigg| \mathcal{F}_j \right]
\leq \left(\frac{j+\ell+1}{j+1} \right)^K 
\prod_{m=1}^p \left|\Phi^*_{j} (\mathrm{e}^{\textnormal{i} \theta_m})
\right|^{2s_m}
$$
for all $\ell \geq 0$. By the Cauchy-Schwarz inequality and an application of the 
inequality just above to $2s_1, \dots, 2s_p$
instead of $s_1, \dots, s_p$, 
we deduce, for $\ell \geq 1$, 
\begin{align*}
& \mathbb{E} \left[
 \prod_{m=1}^p \left|\Phi^*_{j+\ell-1} (\mathrm{e}^{\textnormal{i} \theta_m})
\right|^{2s_m}  
\mathbf{1}_{\exists m \in \{1,\dots,p\},
 |\Psi_{j+\ell-1} (\theta_m) - \Psi_j (\theta_m) - (\ell-1)\theta_m| \geq t} \, \bigg| \mathcal{F}_j
\right]
\\ & 
\leq \left(\mathbb{E} \left[
 \prod_{m=1}^p \left|\Phi^*_{j+\ell-1} (\mathrm{e}^{\textnormal{i} \theta_m})
\right|^{4s_m}  
 \bigg| \mathcal{F}_j
\right] \right)^{1/2} \times \\
& \ \ \mathbb{P} \left(\exists m \in \{1,\dots,p\},
 |\Psi_{j+\ell-1} (\theta_m) - \Psi_j (\theta_m) - (\ell-1)\theta_m| \geq t \, | \mathcal{F}_j
\right)^{1/2}
\\ & \leq \left(\frac{j+\ell+1}{j+1} \right)^K 
\prod_{m=1}^p \left|\Phi^*_{j} (\mathrm{e}^{\textnormal{i} \theta_m})
\right|^{2s_m}  \sqrt{2 p}  \exp \left( - \frac{t^2 \beta}{16 \log \left(1 + \frac{\beta ( \ell-1)}{1 + \beta j} \right)}\right) \mathbf{1}_{\ell \geq 2}
\\ & \leq \left(\frac{j+\ell+1}{j+1} \right)^K 
\prod_{m=1}^p |\Phi^*_{j} (\mathrm{e}^{\textnormal{i} \theta_m})
|^{2s_m}  \sqrt{2 p}  \exp \left( - \frac{t^2  \beta j}{16 \ell}\right),
\end{align*}
for some $K > 0$ depending only on $\beta$, $p$
and $(s_m)_{1 \leq m \leq p}$, where the second inequality is obtained from a union bound on $m$ and 
Proposition \ref{boundlargefluctuationpsi}, and the last inequality 
is due to the fact that for $\ell \geq 2$, 
$$\log \left(1 + \frac{\beta(\ell-1)}{1 + \beta j} \right) \leq  \log \left(1 + \frac{\beta \ell}{ \beta j} \right) = \log (1 + \ell/j) \leq \ell/j.$$
Now, if we assume $\ell \leq j$, 
$(j+\ell+1)/(j+1)$ is smaller than $2$, and then 
its $K$-th power is bounded by a quantity depending only on $\beta$, $p$
and $(s_m)_{1 \leq m \leq p}$. We then deduce, after adjusting $K$, 
\begin{align*}
& \mathbb{E} \left[ \prod_{m=1}^p \left|\Phi^*_{j+\ell} (\mathrm{e}^{\textnormal{i} \theta_m})
\right|^{2s_m} \;  \bigg| \mathcal{F}_j \right]
\\ & \leq 
 \left(1 + \frac{2}{ \beta(j+\ell)} \left|\sum_{m=1}^{p} s_m \mathrm{e}^{\textnormal{i} (\Psi_{j} (\theta_m) + (\ell-1) \theta_m) }\right|^2
 +\frac{Kt}{j+\ell} + \frac{K}{(j+\ell)^2}\right) 
 \mathbb{E} \left[ \prod_{m=1}^p \left|\Phi^*_{j+\ell-1} (\mathrm{e}^{\textnormal{i} \theta_m})
\right|^{2s_m} \;  \bigg| \mathcal{F}_j \right]
\\ & + \frac{K}{j+\ell}
\prod_{m=1}^p \left|\Phi^*_{j} (\mathrm{e}^{\textnormal{i} \theta_m})
\right|^{2s_m}  \exp \left( - \frac{t^2  \beta j}{16 \ell}\right).
\end{align*}
By induction on $\ell \in \{0,1,\dots, j\}$, we deduce
\begin{align*}
& \mathbb{E} \left[ \prod_{m=1}^p \left|\Phi^*_{j+\ell} (\mathrm{e}^{\textnormal{i} \theta_m})
\right|^{2s_m} \;  \bigg| \mathcal{F}_j \right]
\\ & \leq 
\prod_{r = 1}^{\ell} \left(1 + \frac{2}{ \beta(j+r)} \left|\sum_{m=1}^{p} s_m \mathrm{e}^{\textnormal{i} (\Psi_{j} (\theta_m) + (r-1) \theta_m) }\right|^2
 +\frac{K (t + \mathrm{e}^{-t^2 \beta j/(16 r)})}{j+r} + \frac{K}{(j+r)^2}\right) 
 \prod_{m=1}^p \left|\Phi^*_{j} (\mathrm{e}^{\textnormal{i} \theta_m})
\right|^{2s_m}
\\ & \leq \prod_{m=1}^p \left|\Phi^*_{j} (\mathrm{e}^{\textnormal{i} \theta_m})
\right|^{2s_m} \times \\  
&\ \ \ \exp \left( 
\frac{2}{\beta (j+1)} \sum_{r = 1}^{\ell}
\left|\sum_{m=1}^{p} s_m \mathrm{e}^{\textnormal{i} (\Psi_{j} (\theta_m) + (r-1) \theta_m) }\right|^2 
+ \frac{K \ell (t + \mathrm{e}^{-t^2 \beta j/(16 \ell)})}{j+1}
+ \frac{K \ell}{(j+1)^2}
\right).
\end{align*}
Taking, for $\ell \geq 1$, and then $j \geq 1$, $$t = \sqrt{\frac{ 8 \log ((1+j)/(1+\ell))}{\beta j/\ell}},$$
we get 
$$t + \mathrm{e}^{-t^2 \beta j/(16 \ell)} 
= \sqrt{\frac{ 8 \log ((1+j)/(1+\ell))}{\beta j/\ell}}
+ \mathrm{e}^{- \log ((1+j)/(1+\ell)) / 2},$$
which provides a bound of the form 
\begin{align*}
 &\mathbb{E} \left[ \prod_{m=1}^p \left|\Phi^*_{j+\ell} (\mathrm{e}^{\textnormal{i} \theta_m})
\right|^{2s_m} \;  \bigg| \mathcal{F}_j \right]  \leq \prod_{m=1}^p \left|\Phi^*_{j} (\mathrm{e}^{\textnormal{i} \theta_m})
\right|^{2s_m} \times  \\ &\exp \left( 
\frac{2}{\beta (j+1)} \sum_{r = 1}^{\ell}
\left|\sum_{m=1}^{p} s_m \mathrm{e}^{\textnormal{i} (\Psi_{j} (\theta_m) + (r-1) \theta_m) }\right|^2 
+ \frac{K \ell  (1 + \sqrt{\log((1+j)/(1+\ell))} }{(j+1)\sqrt{(1+j)/(1+\ell)}}
\right),
\end{align*}
the term $K\ell/(j+1)^2$ being absorbed by the previous term after adjusting $K$. 
This bound obviously occurs also for $\ell = 0$. 
If we expand the squared modulus and sum in $r$, we get a double sum, for $1 \leq m_1, m_2 \leq p$, of geometric series. 
Separating the geometric series corresponding to the terms where $m_1 = m_2$ and the series for $m_1 \neq m_2$ gives a bound of the following form:  
\begin{align*}
& \mathbb{E} \left[ \prod_{m=1}^p \left|\Phi^*_{j+\ell} (\mathrm{e}^{\textnormal{i} \theta_m})
\right|^{2s_m} \;  \bigg| \mathcal{F}_j \right]
\\ & \leq \prod_{m=1}^p \left|\Phi^*_{j} (\mathrm{e}^{\textnormal{i} \theta_m})
\right|^{2s_m} \, \exp \left[ 
\frac{2 \ell}{\beta (j+1)}  
 \sum_{m=1}^{p} s_m^2 
+ \frac{K}{j+1} \min \left( \ell, \left(\min_{1 \leq m_1 \neq m_2 \leq p} 
||\theta_{m_1} - \theta_{m_2}|| \right)^{-1} \right)  \dots \right.
\\ & \left.  \dots
+ \frac{K \ell  (1 + \sqrt{\log((1+j)/(1+\ell))} }{(j+1)\sqrt{(1+j)/(1+\ell)}}
\right].
\end{align*}
This completes the proof of the proposition. 
\end{proof}

By iterating Proposition \ref{boundconditionalmoment} we are able to deduce the following.

\begin{prop} \label{boundconditionalmomentsdoubling}
For $p \geq 2$, let $(\theta_m)_{1 \leq m \leq p}\in \mathbb{R}^p$ , $(s_m)_{1 \leq m \leq p}\in \mathbb{R}_+^p$. 
Let 
$$\mu \overset{\textnormal{def}}{=} \min_{1 \leq m_1 \neq m_2 \leq p} 
||\theta_{m_1} - \theta_{m_2}||.$$
Then, for all $j\in \mathbb{N}$ so that $j \geq \mu^{-1}$, 
$$\mathbb{E} \left[ \prod_{m=1}^p \left|\Phi^*_{2j} (\mathrm{e}^{\textnormal{i} \theta_m})\right|^{2s_m} \;  \bigg| \mathcal{F}_j \right]
\leq \prod_{m=1}^p \left|\Phi^*_{j} (\mathrm{e}^{\textnormal{i} \theta_m})
\right|^{2s_m} \, \exp \left(
\frac{2 \log 2}{\beta}  
 \sum_{m=1}^{p} s_m^2 
+ K \left( \frac{\log (1 + j \mu)}{j \mu} \right)^{1/3} \right)
$$
where $K > 0$ depends only on $\beta, p$
and on the sequence $(s_m)_{1 \leq m \leq p}$. 
\end{prop}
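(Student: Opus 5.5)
We will obtain the doubling bound by chaining Proposition \ref{boundconditionalmoment} over a geometric subdivision of $[j,2j]$. Fix an integer $n \ge 1$, to be chosen later in terms of $j\mu$. Set $j_0=j$ and $j_{i+1}=j_i+\ell_i$, where $\ell_i\approx j_i/n$, so that $j_n=2j$. This takes about $n\log 2$ steps, and since each $j_i\in[j,2j]$, the ratio $\ell_i/(j_i+1)$ is of order $1/n$. The condition $\ell_i\le j_i$ of Proposition \ref{boundconditionalmoment} clearly holds. By the tower property,
\begin{equation*}
\mathbb{E}\Big[\prod_m |\Phi^*_{2j}|^{2s_m}\,\Big|\,\mathcal{F}_j\Big]=\mathbb{E}\Big[\mathbb{E}\big[\cdots\,\big|\,\mathcal{F}_{j_{n-1}}\big]\,\Big|\,\mathcal{F}_j\Big].
\end{equation*}
We apply Proposition \ref{boundconditionalmoment} successively from the top step down. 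The exponential factor it produces is deterministic, so it can be pulled out of each conditional expectation, and the bound iterates cleanly.

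Next we sum the three error contributions.

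\begin{itemize}
\item The main terms add up to $\frac{2}{\beta}\sum_m s_m^2\sum_i \frac{\ell_i}{j_i+1}$. Since $\sum_i \ell_i/(j_i+1)$ is a Riemann sum for $\int_j^{2j}dt/t=\log 2$, it equals $\log 2+O(1/n)$ (the $+1$ contributes $O(1/j)$). This gives the leading term $\frac{2\log 2}{\beta}\sum_m s_m^2$ plus an error of size $O(1/n+1/j)$.
\item The separation terms add up to $\sum_i \frac{K}{j_i+1}\min(\ell_i,\mu^{-1})\le K n\,\frac{1}{j\mu}$, which is $O(n/(j\mu))$.
\item The cubic terms add up to $\sum_i K(\ell_i/(j_i+1))^{3/2}\big(1+\sqrt{\log((1+j_i)/(1+\ell_i))}\big)$. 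Each summand is of order $n^{-3/2}\sqrt{\log n}$, so the total is $O(n^{-1/2}\sqrt{\log n})$.
\end{itemize}

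The total error is therefore
\begin{equation*}
O\big(n^{-1/2}\sqrt{\log n}+n/(j\mu)+1/j\big).
\end{equation*}
The $1/j$ term is harmless: $\mu\le\pi$ gives $1/j\lesssim 1/(j\mu)$, which is at most the target bound.

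Finally, we choose $n$ to balance the first two terms. Take $n\approx (j\mu)^{2/3}(\log(1+j\mu))^{1/3}$, rounded to an integer in $[1,j]$; this is possible because $j\mu\ge1$ and $\mu\le\pi$. Then
\begin{equation*}
n/(j\mu)\approx (\log(1+j\mu)/(j\mu))^{1/3},
\end{equation*}
and likewise $n^{-1/2}\sqrt{\log n}\lesssim (j\mu)^{-1/3}(\log(1+j\mu))^{1/3}$. This gives the claimed error $K(\log(1+j\mu)/(j\mu))^{1/3}$. When $j\mu$ is bounded, the right-hand side is bounded below by a positive constant, so taking $n=1$ and enlarging $K$ covers that case.

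The main obstacle is the bookkeeping. One must ensure the step sizes $\ell_i$ are integers with $\sum_i\ell_i=j$ exactly, keep all ratios $\ell_i/(j_i+1)$ comparable to $1/n$, and verify that the Riemann-sum error and the rounding of $n$ are absorbed by the $(\cdot)^{1/3}$ term uniformly. A convenient way to do this is to use $n$ equal steps of size roughly $j/n$, with the remainder spread one unit at a time. With equal steps the main-term sum is $\sum_i \frac{j/n}{j+ij/n}\approx\log 2$ up to $O(1/n)$, which suffices.
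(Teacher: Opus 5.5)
Your proposal is correct and follows essentially the same route as the paper: you split $[j,2j]$ into $n$ roughly equal integer steps and chain Proposition \ref{boundconditionalmoment} while conditioning on $\mathcal{F}_j$. You bound the three error contributions by $K/n$, $Kn/(j\mu)$ and $K\sqrt{\log(1+n)/n}$, then choose $n\sim (j\mu)^{2/3}(\log(1+j\mu))^{1/3}$, with $n=1$ when $j\mu$ is bounded, which is exactly what the paper does with its $q=f(j\mu)$. The only blemishes are cosmetic: the detour through geometric steps is unnecessary, and the per-step cubic bound should be written as $n^{-3/2}\bigl(1+\sqrt{\log n}\bigr)$ rather than $n^{-3/2}\sqrt{\log n}$, which changes nothing.
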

\begin{proof}
For $q\in \mathbb{N}$ such that $1 \leq q \leq j$, we consider $j_0, \dots, j_q \in \mathbb{N}$ such that 
$$j = j_0 \leq j_1 \leq \dots \leq j_q = 2j,$$
and 
$$|j_{r + 1} - j_r - j/q| \leq 1$$
for $0 \leq r \leq q-1$. 
For $0 \leq r \leq q-1$, applying Proposition \ref{boundconditionalmoment}
to $j = j_r$, $\ell = j_{r+1} - j_r \leq 2j - j \leq j_r$, and conditioning on $\mathcal{F}_j$ gives 
an inequality of the form
\begin{align*}
& \mathbb{E} \left[ \prod_{m=1}^p \left|\Phi^*_{j_{r+1}} (\mathrm{e}^{\textnormal{i} \theta_m})
\right|^{2s_m} \;  \bigg| \mathcal{F}_j \right]
\\ & \leq \mathbb{E} \left[ \prod_{m=1}^p \left|\Phi^*_{j_r} (\mathrm{e}^{\textnormal{i} \theta_m})
\right|^{2s_m} \, \bigg| \mathcal{F}_j \right] \, \exp \left[ 
\frac{2 (j_{r+1}-j_r)}{\beta j_r}  
 \sum_{m=1}^{p} s_m^2 
+ \frac{K \mu^{-1}}{j} 
+ K q^{-3/2}   \sqrt{\log (1 +q)}    
\right].
\end{align*}
Combining these inequalities for all values of $r$ between $0$ and $q-1$ gives
\begin{align*}
\mathbb{E} \left[ \prod_{m=1}^p \left|\Phi^*_{2j} (\mathrm{e}^{\textnormal{i} \theta_m})
\right|^{2s_m} \;  \bigg| \mathcal{F}_j \right]
&\leq \prod_{m=1}^p \left|\Phi^*_{j} (\mathrm{e}^{\textnormal{i} \theta_m})
\right|^{2s_m} \times \\ 
&\exp \left[ 
\sum_{r = 0}^{q-1}\frac{2 (j_{r+1}-j_r)}{\beta j_r}  
 \sum_{m=1}^{p} s_m^2 
+ \frac{K q }{j \mu} 
+ K    \sqrt{\frac{\log (1 +q)}{q}}    
\right]. 
\end{align*}
We have, for all $x \in [0,1]$, 
$$0 \leq x - \log(1+x) = \frac{x^2}{2} 
- \frac{x^3}{3} + \frac{x^4}{4} - \dots 
\leq \frac{x^2}{2}
$$
and then 
$$0 \leq \frac{j_{r+1}-j_r}{j_r} - \log (j_{r+1}/j_r)
\leq \frac{1}{2} \left(\frac{j_{r+1}- j_r}{j_r} \right)^2 
\leq \frac{1}{2}\left(\frac{j/q + 1}{j} \right)^2
\leq \frac{1}{2}\left(\frac{2j/q}{j} \right)^2
\leq \frac{2}{q^2}.$$
We deduce a bound of the form
\begin{align*}
& \mathbb{E} \left[ \prod_{m=1}^p \left|\Phi^*_{2j} (\mathrm{e}^{\textnormal{i} \theta_m})
\right|^{2s_m} \;  \bigg| \mathcal{F}_j \right] \\ & 
\leq \prod_{m=1}^p \left|\Phi^*_{j} (\mathrm{e}^{\textnormal{i} \theta_m})
\right|^{2s_m} \exp \left[ 
\sum_{r = 0}^{q-1}\frac{2}{\beta} \log (j_{r+1}/j_r)  
 \sum_{m=1}^{p} s_m^2 
+ \frac{K}{q} + \frac{K q }{j \mu} 
+ K    \sqrt{\frac{\log (1 +q)}{q}}    
\right].
\end{align*}
We have $j \mu \geq 1$ by assumption. 
We choose $q = f(j \mu)$ where $f$ is a function 
from $[1, \infty)$ to $\mathbb{Z}$ such that $f(x) = 1$ for 
$x \in [1, \pi]$, $1 \leq f(x) \leq x/\pi$ for $x \geq \pi$ and $f(x)$ is equivalent to $x^{2/3} (\log x)^{1/3}$ when $x \rightarrow \infty$. 
The inequalities satisfied by $f$ guarantee that 
$1 \leq q \leq j$. Indeed, since $\mu$ is always at most $\pi$, we have $1 \leq f(j \mu) \leq j \mu/ \pi \leq j$ if $j \mu \geq \pi$, and $ f(j \mu) = 1 \leq j $ if $j \mu \in [1, \pi]$. 
For $j \mu \geq 2$ sufficiently large, this choice of $q$ gives 
\begin{align*}
    &\frac{1}{q} + \frac{q}{j \mu} 
+  \sqrt{ \frac{\log (1+q)}{q} } 
\\ & \leq \frac{1}{(j \mu)^{2/3} (\log (j \mu))^{1/3}/2} + \frac{ 2 (j \mu)^{2/3} (\log (j \mu))^{1/3} }{j \mu} + \sqrt{ \frac{\log (1+2 (j \mu)^{2/3} (\log (j \mu))^{1/3} )}{(j \mu)^{2/3} (\log (j \mu))^{1/3}/2} },
 \end{align*}
which is dominated by $(j \mu)^{-1/3} (\log (1 + j \mu))^{1/3}$. 
This bound remains true for $j \mu \geq 1$ smaller than a universal constant, since $q = f(j \mu) \in [1, \max(1, j \mu/\pi)]$ takes finitely many values in this case.
This completes the proof of Proposition \ref{boundconditionalmomentsdoubling}, after observing that 
$$\sum_{r = 0}^{q-1} \log (j_{r+1}/j_r) 
= \log (j_q/j_0) = \log 2.$$

\end{proof}

We can now relatively easily deduce the following bound on conditional moments on $\mathcal{F}_j$ for general $\Phi_N^*$ instead of $\Phi_{2j}^*$.

\begin{prop} \label{boundconditionalmomentsfinal}
For $p \geq 2$, let $(\theta_m)_{1 \leq m \leq p}\in \mathbb{R}^p$ 
, $(s_m)_{1 \leq m \leq p}\in \mathbb{R}_+^p$. 
Let 
$$\mu \overset{\textnormal{def}}{=} \min_{1 \leq m_1 \neq m_2 \leq p} 
||\theta_{m_1} - \theta_{m_2}||.$$
Then, for all $N,j\in \mathbb{N}$ so that $N \geq j \geq \mu^{-1}$, 
$$\mathbb{E} \left[ \prod_{m=1}^p \left|\Phi^*_{N} (\mathrm{e}^{\textnormal{i} \theta_m})\right|^{2s_m} \;  \bigg| \mathcal{F}_j \right]
\leq K \left( \frac{N}{j} \right)^{\frac{2}{\beta} \sum_{m=1}^p s_m^2}  \prod_{m=1}^p \left|\Phi^*_{j} (\mathrm{e}^{\textnormal{i} \theta_m})
\right|^{2s_m} 
$$
where $K > 0$ depends only on $\beta, p$
and on the sequence $(s_m)_{1 \leq m \leq p}$. 
\end{prop}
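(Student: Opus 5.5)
The plan is to iterate Proposition \ref{boundconditionalmomentsdoubling} along a dyadic sequence of times, using the tower property. Then I close the gap between the last dyadic time and $N$ with the crude bound from the proof of Proposition \ref{boundconditionalmoment}. Fix $N \ge j \ge \mu^{-1}$ and let $q\ge 0$ be the largest integer with $2^q j \le N$. Set $j_r = 2^r j$ for $0\le r\le q$. Each $j_r \ge j\ge \mu^{-1}$, so Proposition \ref{boundconditionalmomentsdoubling} applies at level $j_r$ and gives
\begin{equation*}
\mathbb{E}\left[\prod_{m=1}^p \left|\Phi^*_{j_{r+1}}(\mathrm{e}^{\mathrm{i}\theta_m})\right|^{2s_m}\,\bigg|\,\mathcal{F}_{j_r}\right]\le \prod_{m=1}^p \left|\Phi^*_{j_r}(\mathrm{e}^{\mathrm{i}\theta_m})\right|^{2s_m}\exp\left(\frac{2\log 2}{\beta}\sum_{m=1}^p s_m^2+K\left(\frac{\log(1+j_r\mu)}{j_r\mu}\right)^{1/3}\right).
\end{equation*}
Since $\mathcal{F}_j\subset\mathcal{F}_{j_r}$, I condition successively on $\mathcal{F}_{j_{q-1}},\dots,\mathcal{F}_{j_0}=\mathcal{F}_j$ and multiply the resulting factors. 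The deterministic factors $2^{(2/\beta)\sum s_m^2}$ multiply to $(2^q)^{(2/\beta)\sum s_m^2}\le (N/j)^{(2/\beta)\sum s_m^2}$.

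The key step is to check that the error terms are summable uniformly in $j$ and $\mu$. Write $x_r = j_r\mu = 2^r j\mu \ge 2^r$. The function $x\mapsto (\log(1+x)/x)^{1/3}$ is bounded on $[1,\infty)$ and is decreasing for $x\ge 1$, or at least bounded by $C x^{-1/4}$. Hence
\begin{equation*}
\sum_{r\ge 0}\left(\frac{\log(1+x_r)}{x_r}\right)^{1/3}\le C\sum_{r\ge0}2^{-r/4}<\infty,
\end{equation*}
and the product of the exponential error factors is bounded by a constant depending only on $\beta,p,(s_m)$. This geometric summation is exactly why the doubling form of the previous proposition is used.

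It remains to pass from $j_q$ to $N$, where $j_q\le N<2j_q$. In the proof of Proposition \ref{boundconditionalmoment} it was shown, for all $\ell\ge0$, that
\begin{equation*}
\mathbb{E}\left[\prod_{m=1}^p \left|\Phi^*_{j_q+\ell}(\mathrm{e}^{\mathrm{i}\theta_m})\right|^{2s_m}\,\bigg|\,\mathcal{F}_{j_q}\right]\le\left(\frac{j_q+\ell+1}{j_q+1}\right)^{K}\prod_{m=1}^p\left|\Phi^*_{j_q}(\mathrm{e}^{\mathrm{i}\theta_m})\right|^{2s_m}.
\end{equation*}
Taking $\ell=N-j_q< j_q$, the ratio is at most $2$, so this step costs only a constant factor. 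Alternatively, the first display of Proposition \ref{boundconditionalmoment} itself, with $\ell\le j$, gives a bounded exponent. Conditioning this bound down to $\mathcal{F}_j$ by the tower property and combining it with the dyadic bound yields the claim, with $K$ adjusted.

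I expect no real obstacle. The only point needing care is the uniformity of the error sum, which relies on $j\mu\ge1$ forcing $x_r\ge 2^r$. The crude final step is legitimate because the exponent $\frac{2}{\beta}\sum s_m^2$ is only required up to a multiplicative constant.
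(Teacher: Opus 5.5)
Your proposal is correct and follows essentially the same route as the paper: iterate Proposition \ref{boundconditionalmomentsdoubling} along the dyadic times $2^r j$ using the tower property, bound the accumulated error terms uniformly (summable because $j\mu \geq 1$), and pay only a constant to pass from the last dyadic time to $N$. The only difference is in that final step: the paper applies Proposition \ref{boundconditionalmoment} itself with $\ell = N - 2^r j$, while your primary option, the crude bound $\bigl((j+\ell+1)/(j+1)\bigr)^K$ from inside its proof, works equally well.
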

\begin{proof}
If $j \geq \mu^{-1}$, using Proposition \ref{boundconditionalmomentsdoubling}, we deduce, by 
induction on $r \geq 0$,  
$$\mathbb{E} \left[ \prod_{m=1}^p \left|\Phi^*_{2^r j} (\mathrm{e}^{\textnormal{i} \theta_m})\right|^{2s_m} \;  \bigg| \mathcal{F}_j \right]
\leq 
\prod_{m=1}^p \left|\Phi^*_{j} (\mathrm{e}^{\textnormal{i} \theta_m})
\right|^{2s_m} \, \exp \left(
\frac{2 r \log 2}{\beta}  
 \sum_{m=1}^{p} s_m^2 
+ K \sum_{b = 1}^r \left( \frac{\log (1 + 2^b j \mu)}{2^b j \mu} \right)^{1/3} \right). 
$$
Applying Proposition \ref{boundconditionalmoment}
to $2^r j$ instead of $j$ and to $\ell = N - 2^r j$,
and then conditioning with respect to $\mathcal{F}_j$, we deduce, for $2^r j \leq N \leq 2^{r+1} j$, an inequality of the form 
\begin{align*}
\mathbb{E} \left[ \prod_{m=1}^p \left|\Phi^*_{N} (\mathrm{e}^{\textnormal{i} \theta_m})\right|^{2s_m} \;  \bigg| \mathcal{F}_j \right]
&\leq 
  \prod_{m=1}^p \left|\Phi^*_{j} (\mathrm{e}^{\textnormal{i} \theta_m})
\right|^{2s_m} \times \\ &\exp \left(K + 
\frac{2 r \log 2}{\beta}  
 \sum_{m=1}^{p} s_m^2 
+ K \sum_{b = 1}^r \left( \frac{\log (1 + 2^b j \mu)}{2^b j \mu} \right)^{1/3} \right),
\end{align*}
since the exponential factor in Proposition \ref{boundconditionalmoment} is uniformly bounded 
by a quantity depending only on $\beta, p$ and 
$(s_m)_{1 \leq m \leq p}$. 
Now, we are done, since the sum in $b$ is unifomly bounded for $j \mu \geq 1$, and 
$$\exp \left(\frac{2 r \log 2}{\beta}  
 \sum_{m=1}^{p} s_m^2 \right) = (2^{r})^{\frac{2}{\beta} \sum_{m=1}^p s_m^2} 
 \leq (N/j)^{\frac{2}{\beta} \sum_{m=1}^p s_m^2}. $$

\end{proof}

We can now prove the following sharp bound on the joint moments of $(|\Phi^*_{N} (\mathrm{e}^{\textnormal{i} \theta_m})|^{2s_m})_{m=1}^p$ by an inductive argument on the number of factors in the product $\prod_{m=1}^p |\Phi^*_{N} (\mathrm{e}^{\textnormal{i} \theta_m})|^{2s_m}$.

\begin{prop}\label{FinalBoundProp}
Let $(\theta_m)_{1 \leq m \leq p}\in \mathbb{R}^p$ 
, $(s_m)_{1 \leq m \leq p}\in \mathbb{R}_+^p$ . 
Then, for all $N \in \mathbb{N}$, 
$$\mathbb{E} \left[ \prod_{m=1}^p \left|\Phi^*_{N} (\mathrm{e}^{\textnormal{i} \theta_m})\right|^{2s_m} \;  \right]
\leq K N^{\frac{2}{\beta} \sum_{m=1}^p s_m^2}  \prod_{1 \leq m_1 < m_2 \leq p} 
\min(N, |\mathrm{e}^{\textnormal{i} \theta_{m_2}} - \mathrm{e}^{\textnormal{i} \theta_{m_1}} |^{-1})^{\frac{4}{\beta} s_{m_1} s_{m_2}},
$$
where $K > 0$ depends only on $\beta, p$
and on the sequence $(s_m)_{1 \leq m \leq p}$. 
\end{prop}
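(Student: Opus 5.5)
We argue by induction on $p$, the number of factors. For $p=1$ the bound reads $\mathbb{E}[|\Phi^*_N(\mathrm{e}^{\mathrm{i}\theta})|^{2s}]\le KN^{2s^2/\beta}$. This follows from the product formula $\Phi^*_N=\prod_{j<N}(1-\boldsymbol{\alpha}_j\mathrm{e}^{\mathrm{i}\Psi_j})$ together with rotation invariance and independence: each factor contributes $\mathbb{E}|1-\boldsymbol{\alpha}_j|^{2s}=1+\frac{2s^2}{\beta(j+1)}+O(j^{-2})$, by the expansion in the proof of Proposition \ref{boundconditionalmoment} with $p=1$. Alternatively one can use the explicit Selberg evaluation of Lemma \ref{AsymptoticsPartition}.

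For $p\ge2$, let $\mu$ be the minimal pairwise distance and pick a pair $(m_1,m_2)$ realizing it.

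\textbf{Case $\mu\le 1/N$.} Every factor $\min(N,|\mathrm{e}^{\mathrm{i}\theta_{m_2}}-\mathrm{e}^{\mathrm{i}\theta_{m_1}}|^{-1})$ for this pair is comparable to $N$. We merge the two points. By Hölder, or more cleverly by the deterministic comparison $|\Phi^*_N(\mathrm{e}^{\mathrm{i}\theta_{m_2}})|\le C|\Phi^*_N(\mathrm{e}^{\mathrm{i}\theta_{m_1}})|$, we bound the moment by the moment with $p-1$ points, where $\theta_{m_1}$ carries exponent $s_{m_1}+s_{m_2}$. The deterministic comparison is problematic because zeros are near the unit circle, so the safer route is Hölder: apply the $(p-1)$-point bound with exponents $\lambda s$ and use the fact that the $\Psi$'s of nearby points stay close. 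The simplest choice is to use Proposition \ref{boundconditionalmoment} step by step. The two points then have essentially equal $\Psi$ up to step $N$, so the one-step factor is $1+\frac{2}{\beta(j+1)}|\sum s_m\mathrm{e}^{\mathrm{i}\Psi_j(\theta_m)}|^2$ with the two points merged. The exponent $\frac2\beta(s_{m_1}+s_{m_2})^2=\frac2\beta(s_{m_1}^2+s_{m_2}^2)+\frac4\beta s_{m_1}s_{m_2}$ produces exactly the $N^{4s_{m_1}s_{m_2}/\beta}$ factor. The remaining cross terms with other points are handled by the induction hypothesis applied to the merged configuration, since distances to other points change by at most $1/N$.

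\textbf{Case $\mu>1/N$.} Set $j=\lceil\mu^{-1}\rceil\le N$ and condition on $\mathcal{F}_j$. By Proposition \ref{boundconditionalmomentsfinal},
\[
\mathbb{E}\Big[\prod_m|\Phi^*_N(\mathrm{e}^{\mathrm{i}\theta_m})|^{2s_m}\Big]\le K\Big(\frac Nj\Big)^{\frac2\beta\sum s_m^2}\,\mathbb{E}\Big[\prod_m|\Phi^*_j(\mathrm{e}^{\mathrm{i}\theta_m})|^{2s_m}\Big].
\]
At scale $j\approx\mu^{-1}$ the closest pair is at distance about $1/j$, so we are in the first case at level $j$ (with constants adjusted for $\mu\le C/j$, which is handled in the same way). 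The induction, including the merging argument, then gives
\[
\mathbb{E}\Big[\prod_m|\Phi^*_j|^{2s_m}\Big]\le Kj^{\frac2\beta\sum s_m^2}\prod_{m<m'}\min\big(j,|\mathrm{e}^{\mathrm{i}\theta_m}-\mathrm{e}^{\mathrm{i}\theta_{m'}}|^{-1}\big)^{\frac4\beta s_ms_{m'}}.
\]
Since $j\ge\mu^{-1}$ dominates every inverse distance up to constants, each min with $j$ equals the min with $N$ up to constants. Combining the two displays gives $N^{\frac2\beta\sum s_m^2}$ times the required product.

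\textbf{Main obstacle.} The key step is the merging in the first case: showing that for two points at distance $\lesssim1/j$, the joint moment at level $j$ behaves like the single-point moment with summed exponent, uniformly in the other points. I would do this by iterating the one-step expansion of Proposition \ref{boundconditionalmoment} directly, i.e. a product over $r\le j$ of $1+\frac{2}{\beta r}|\sum_m s_m\mathrm{e}^{\mathrm{i}\Psi_r(\theta_m)}|^2$. I would bound $|\sum_m\cdots|^2$ using Proposition \ref{boundlargefluctuationpsi} to control $\Psi_r(\theta_{m_1})-\Psi_r(\theta_{m_2})$ for $r\le j$. The cross terms with well-separated points should be organized dyadically, as in Proposition \ref{boundconditionalmomentsdoubling}, so that each pair $(m,m')$ contributes $\exp(\frac4\beta s_ms_{m'}\sum_{r\le|\theta_m-\theta_{m'}|^{-1}}1/r)$ and the oscillating remainder stays bounded. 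Equivalently, the whole argument can be run as a dyadic clustering induction over the scales $|\theta_m-\theta_{m'}|^{-1}$, each time applying Proposition \ref{boundconditionalmomentsfinal} between consecutive merging scales.
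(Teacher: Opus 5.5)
Your architecture matches the paper's. You pick the closest pair, pass from level $N$ to level $j=\min(N,\lceil\mu^{-1}\rceil)$ via Proposition~\ref{boundconditionalmomentsfinal} at cost $(N/j)^{\frac{2}{\beta}\sum_m s_m^2}$, then merge the closest pair at level $j$ and invoke the $(p-1)$-point hypothesis.

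The gap is the merge itself. You flag it as the main obstacle, and both of your cases rest on it: your Case 2 display at level $j$ is just the statement being proved, with the closest pair at distance of order $1/j$. It is never established, and the route you sketch would not establish it. Iterating the one-step expansion from the proof of Proposition~\ref{boundconditionalmoment} forces you to control $|\sum_m s_m\mathrm{e}^{\mathrm{i}\Psi_r(\theta_m)}|^2$ for all $r\le j$ while a close pair is present, i.e.\ to show $\Psi_r(\theta_{m_1})-\Psi_r(\theta_{m_2})=O(1)$. Proposition~\ref{boundlargefluctuationpsi} cannot give this. It bounds the deviation of a single phase from its drift, and started at step $0$ that deviation is of order $\sqrt{\log r}$. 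Controlling the two phases separately therefore only bounds $|\Psi_r(\theta_{m_1})-\Psi_r(\theta_{m_2})|$ by $r\mu+O(\sqrt{\log r})$. You would need a genuinely new two-angle estimate exploiting the correlation of the increments. The deterministic comparison is also false, as you suspect.

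The missing observation is that no phase information is needed, because the merge is pointwise. Set $a=|\Phi^*_j(\mathrm{e}^{\mathrm{i}\theta_{p-1}})|$ and $b=|\Phi^*_j(\mathrm{e}^{\mathrm{i}\theta_p})|$. The paper uses
$a^{2s_{p-1}}b^{2s_p}\le\max(a,b)^{2(s_{p-1}+s_p)}\le a^{2(s_{p-1}+s_p)}+b^{2(s_{p-1}+s_p)}$,
which holds for every configuration, and applies the induction hypothesis to the two resulting $(p-1)$-point moments. This yields $j^{\frac{2}{\beta}(s_{p-1}+s_p)^2}$. The surplus $j^{\frac{4}{\beta}s_{p-1}s_p}$ is, up to a constant, exactly the required pair factor, because the merge is done at level $j\approx\mu^{-1}$, where $j\le\min(N,3|\mathrm{e}^{\mathrm{i}\theta_p}-\mathrm{e}^{\mathrm{i}\theta_{p-1}}|^{-1})$.

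Since $\theta_{p-1},\theta_p$ is the closest pair, each other point's distances to $\theta_{p-1}$ and to $\theta_p$ agree within a factor $2$. So the two terms are comparable, and their sum can be replaced by the geometric mean with weights $s_{p-1}/(s_{p-1}+s_p)$ and $s_p/(s_{p-1}+s_p)$. This splits each cross exponent $\frac{4}{\beta}s_m(s_{p-1}+s_p)$ into $\frac{4}{\beta}s_ms_{p-1}$ and $\frac{4}{\beta}s_ms_p$, and $\min(j,\cdot)\le\min(N,\cdot)$ finishes the argument.

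Your Hölder idea also works without any closeness of phases. Apply it under the measure $\prod_{m\le p-2}|\Phi^*_j(\mathrm{e}^{\mathrm{i}\theta_m})|^{2s_m}\,\mathrm{d}\mathbb{P}$ with conjugate exponents $\frac{s_{p-1}+s_p}{s_{p-1}}$ and $\frac{s_{p-1}+s_p}{s_p}$, and it produces that geometric mean directly. With this, your two cases collapse into one, taking $j=N$ when $\mu\le 1/N$.
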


\begin{proof}
We prove this result by induction on $p \geq 1$. 
The case $p = 1$ is a direct consequence of the fact that 
$$\left|\Phi^*_{N} (\mathrm{e}^{\textnormal{i} \theta_1})\right|^{2s_1}
= \prod_{j=0}^{N-1} \left|1 - \boldsymbol{\alpha}_j\right|^{2 s_1}$$
in distribution, and from the moment estimate
$$\mathbb{E} [|1 - \boldsymbol{\alpha}_j|^{2 s_1}] \leq e^{2s_1^2 /(1 + \beta(j+1))} $$
proven in Proposition 2.5 of \cite{ChhaibiMadauleNajnudel}, which 
implies 
$$\mathbb{E} [|\Phi^*_{N} (\mathrm{e}^{\textnormal{i} \theta_1})|^{2s_1}] \leq \mathrm{e}^{(2s_1^2/\beta) \sum_{j=0}^{N-1} (j+1)^{-1} }
\leq \mathrm{e}^{(2 s_1^2/\beta)(1 + \log N)}.
$$
For $p \geq 2$, let us assume that the proposition holds for $p-1$ points on the unit circle. 
By symmetry, we can assume that 
the closest pair of points among $(\mathrm{e}^{\textnormal{i} \theta_{m}})_{1 \leq m \leq p}$ is given by 
$\theta_{p-1}$ and $\theta_p$. 
Let
$$j = \min(N, \lceil \, ||\theta_{p} - \theta_{p-1} ||^{-1} \, \rceil).$$
If $||\theta_p - \theta_{p+1}|| > 1/N$, we have
$j \geq ||\theta_{p}- \theta_{p-1}||^{-1}$ and then we can apply Proposition \ref{boundconditionalmomentsfinal}, which implies
$$\mathbb{E} \left[ \prod_{m=1}^p \left|\Phi^*_{N} (\mathrm{e}^{\textnormal{i} \theta_m})\right|^{2s_m} \;  \right]
\leq  K \left( \frac{N}{j} \right)^{\frac{2}{\beta} \sum_{m=1}^p s_m^2}  \mathbb{E} \left[ \prod_{m=1}^p \left|\Phi^*_{j} (\mathrm{e}^{\textnormal{i} \theta_m})
\right|^{2s_m} \right].$$
This inequality remains obviously true for $||\theta_p - \theta_{p-1}|| \leq 1/N$, in which case $j =N$. 
We deduce
\begin{align*}
&\mathbb{E} \left[ \prod_{m=1}^p \left|\Phi^*_{N} (\mathrm{e}^{\textnormal{i} \theta_m})\right|^{2s_m} \;  \right]
\leq \\
&K \left( \frac{N}{j} \right)^{\frac{2}{\beta} \sum_{m=1}^p s_m^2}  \mathbb{E} \left[ 
\max\left(\left|\Phi^*_{j} (\mathrm{e}^{\textnormal{i} \theta_{p-1}}) \right|^{2(s_p + s_{p-1})}, \left|\Phi^*_{j} (\mathrm{e}^{\textnormal{i} \theta_{p}}) \right|^{2(s_p + s_{p-1})}\right) \prod_{m=1}^{p-2} \left|\Phi^*_{j} (\mathrm{e}^{\textnormal{i} \theta_m})
\right|^{2s_m} \right].
\end{align*}

Bounding the maximum by a sum and applying the induction hypothesis, we deduce an equality of the form
\begin{align*}& \mathbb{E} \left[ \prod_{m=1}^p \left|\Phi^*_{N} (\mathrm{e}^{\textnormal{i} \theta_m})\right|^{2s_m} \;  \right]
 \leq K \left( \frac{N}{j} \right)^{\frac{2}{\beta} \sum_{m=1}^p s_m^2} 
j^{\frac{2}{\beta} \left( (s_{p-1} + s_p)^2 + \sum_{m=1}^{p-2}  s_m^2 \right)}
 \dots
\\ &
\dots \times
\sum_{r \in \{p-1,p\}}
\prod_{m =1}^{p-2}
\min(j, |\mathrm{e}^{\textnormal{i} \theta_{m}} - \mathrm{e}^{\textnormal{i} \theta_{r}} |^{-1})^{\frac{4}{\beta} s_{m} (s_{p-1} + s_p)}   \prod_{1 \leq m_1 < m_2 \leq p-2} 
\min(j, |\mathrm{e}^{\textnormal{i} \theta_{m_2}} - \mathrm{e}^{\textnormal{i} \theta_{m_1}} |^{-1})^{\frac{4}{\beta} s_{m_1} s_{m_2}}.
\end{align*}
Notice that here, the product in $m$ is equal to $1$ for $p = 2$ and the product in $(m_1, m_2)$ is equal to $1$ for $p \in \{2,3\}$. 
Now, for $1 \leq m \leq p-2$, 
$$|\mathrm{e}^{\textnormal{i} \theta_{m}} - \mathrm{e}^{\textnormal{i} \theta_{p}} | \leq 
|\mathrm{e}^{\textnormal{i} \theta_{m}} - \mathrm{e}^{\textnormal{i} \theta_{p-1}} |
+ |\mathrm{e}^{\textnormal{i} \theta_{p}} - \mathrm{e}^{\textnormal{i} \theta_{p-1}} |
\leq 2 |\mathrm{e}^{\textnormal{i} \theta_{m}} - \mathrm{e}^{\textnormal{i} \theta_{p-1}} |
$$
because $\mathrm{e}^{\textnormal{i} \theta_{p-1}}$
and $\mathrm{e}^{\textnormal{i} \theta_{p}}$ are the closest points among $(\mathrm{e}^{\textnormal{i} \theta_{m}})_{1 \leq m \leq p}$. Similarly, 
$$|\mathrm{e}^{\textnormal{i} \theta_{m}} - \mathrm{e}^{\textnormal{i} \theta_{p-1}} | \leq 2 |\mathrm{e}^{\textnormal{i} \theta_{m}} - \mathrm{e}^{\textnormal{i} \theta_{p}} |.$$
Hence, the ratio between the two terms for $r = p-1$ and $r = p$ is bounded from above and below by a positive quantity depending only on $\beta$ and 
$(s_m)_{1 \leq m \leq p}$. 
In the estimate, we can then replace the sum of the two terms by 
any of their weighted geometric means after adjusting $K$. We deduce 
\begin{align*}& \mathbb{E} \left[ \prod_{m=1}^p \left|\Phi^*_{N} (\mathrm{e}^{\textnormal{i} \theta_m})\right|^{2s_m} \;  \right]
 \leq K \left( \frac{N}{j} \right)^{\frac{2}{\beta} \sum_{m=1}^p s_m^2} 
j^{\frac{2}{\beta} \left( (s_{p-1} + s_p)^2 + \sum_{m=1}^{p-2}  s_m^2 \right)}
 \dots
\\ &
\dots \times
\prod_{r \in \{p-1,p\}}
\prod_{m =1}^{p-2}
\min(j, |\mathrm{e}^{\textnormal{i} \theta_{m}} - \mathrm{e}^{\textnormal{i} \theta_{r}} |^{-1})^{\frac{4}{\beta} s_{m} s_r}   \prod_{1 \leq m_1 < m_2 \leq p-2} 
\min(j, |\mathrm{e}^{\textnormal{i} \theta_{m_2}} - \mathrm{e}^{\textnormal{i} \theta_{m_1}} |^{-1})^{\frac{4}{\beta} s_{m_1} s_{m_2}}
\\ & = K N^{\frac{2}{\beta} \sum_{m=1}^p s_m^2} 
j^{\frac{4}{\beta} s_{p-1} s_p}
\prod_{1 \leq m_1 \leq p-2, m_1 < m_2 \leq p} 
\min(j, |\mathrm{e}^{\textnormal{i} \theta_{m_2}} - \mathrm{e}^{\textnormal{i} \theta_{m_1}} |^{-1})^{\frac{4}{\beta} s_{m_1} s_{m_2}}
\\ & \leq K N^{\frac{2}{\beta} \sum_{m=1}^p s_m^2} 
\min(N,\lceil \, ||\theta_{p} - \theta_{p-1} ||^{-1} \, \rceil)^{\frac{4}{\beta} s_{p-1} s_p}
\prod_{1 \leq m_1 \leq p-2, m_1 < m_2 \leq p} 
\min(N, |\mathrm{e}^{\textnormal{i} \theta_{m_2}} - \mathrm{e}^{\textnormal{i} \theta_{m_1}} |^{-1})^{\frac{4}{\beta} s_{m_1} s_{m_2}}. 
\end{align*}
This proves the desired result, 
since 
$$\lceil \, ||\theta_{p} - \theta_{p-1} ||^{-1} \, \rceil \leq 1 + ||\theta_{p} - \theta_{p-1} ||^{-1} 
\leq 1 + |\mathrm{e}^{\textnormal{i} \theta_{p}}
- \mathrm{e}^{\textnormal{i} \theta_{p-1}}|^{-1} \leq 
3 |\mathrm{e}^{\textnormal{i} \theta_{p}}
- \mathrm{e}^{\textnormal{i} \theta_{p-1}}|^{-1}.
$$
\end{proof}

We can finally prove the main bound from Theorem \ref{MainBound}.

\begin{proof}[Proof of Theorem \ref{MainBound}]
By a change of measure, we can write, for $2\delta\ge \sum_{j=1}^\ell r_j$, so that in particular all exponents are non-negative,
\begin{align*}
\mathbb{E}\left[\left|\mathfrak{q}_N^{\beta,\delta}(\mathrm{e}^{\mathrm{i}x_1/N})\right|^{r_1}\cdots \left|\mathfrak{q}_N^{\beta,\delta}(\mathrm{e}^{\mathrm{i}x_\ell/N})\right|^{r_\ell}\right]=\frac{\mathbb{E}_{\textnormal{C}\beta \textnormal{E}_N}\left[\left|\mathsf{X}_N(1)\right|^{2\delta-\sum_{j=1}^\ell r_j} \left|\mathsf{X}_N(\mathrm{e}^{\mathrm{i}x_1/N})\right|^{r_1}\cdots \left|\mathsf{X}_N(\mathrm{e}^{\mathrm{i}x_\ell/N})\right|^{r_\ell}\right]}{\mathbb{E}_{\textnormal{C}\beta \textnormal{E}_N}\left[\left|\mathsf{X}_N(1)\right|^{2\delta}\right]}.
\end{align*}
Now, recall that the function $\theta \mapsto \mathsf{X}_N(\mathrm{e}^{\mathrm{i}\theta})$ has the same distribution as $\theta \mapsto \Phi^*_{N-1} (\mathrm{e}^{\textnormal{i}\theta })- \mathrm{e}^{\textnormal{i}\theta } \boldsymbol{\eta} \Phi_{N-1} (\mathrm{e}^{\textnormal{i}\theta })$, where $\boldsymbol{\eta}$ is uniformly distributed on $\mathbb{T}$, and also $| \Phi^*_{N-1} (\mathrm{e}^{\textnormal{i}\theta })- \mathrm{e}^{\textnormal{i}\theta } \boldsymbol{\eta} \Phi_{N-1} (\mathrm{e}^{\textnormal{i}\theta })| \le 2 |\Phi^*(\mathrm{e}^{\mathrm{i}\theta})|$.
Then, plugging in the bound from Proposition \ref{FinalBoundProp}, making use of the asymptotics from Lemma \ref{AsymptoticsPartition} for the denominator and using the elementary inequality
\begin{equation*}
N^{-\frac{t_jt_k}{\beta}}\min\left(N,\left|\mathrm{e}^{\mathrm{i}x_j/N}-\mathrm{e}^{\mathrm{i}x_k/N}\right|^{-1}\right)^{\frac{t_jt_k}{\beta}} \le C \frac{1}{1+\left|x_j-x_k\right|^{t_jt_k/\beta}},
\end{equation*}
for some constant $C$ depending only on 
$\beta > 0$, $t_j, t_k \geq 0$, 
we obtain the desired result.
\end{proof}

\bibliographystyle{acm}
\bibliography{References}

\begin{thebibliography}{10}

\bibitem{ArguinBeliusBourgade}
{\sc Arguin, L.-P., Belius, D., and Bourgade, P.}
\newblock Maximum of the characteristic polynomial of random unitary matrices.
\newblock {\em Comm. Math. Phys. 349}, 2 (2017), 703--751.

\bibitem{ABBRS}
{\sc Arguin, L.-P., Belius, D., Bourgade, P., Radziwi\l\l, M., and
  Soundararajan, K.}
\newblock Maximum of the {R}iemann zeta function on a short interval of the
  critical line.
\newblock {\em Comm. Pure Appl. Math. 72}, 3 (2019), 500--535.

\bibitem{ABR1}
{\sc Arguin, L.-P., Bourgade, P., and Radziwi{\l}{\l}, M.}
\newblock The fyodorov-hiary-keating conjecture. i.
\newblock {\em arXiv preprint arXiv:2007.00988\/} (2020).

\bibitem{ABR2}
{\sc Arguin, L.-P., Bourgade, P., and Radziwiłł, M.}
\newblock The fyodorov-hiary-keating conjecture. ii.
\newblock {\em arXiv preprint arXiv:2307.00982\/} (2023).

\bibitem{MoMCbetaE}
{\sc Assiotis, T.}
\newblock On the moments of the partition function of the {$C\beta E$} field.
\newblock {\em J. Stat. Phys. 187}, 2 (2022), Paper No. 14, 22.

\bibitem{RandomAnalytic}
{\sc Assiotis, T.}
\newblock Random entire functions from random polynomials with real zeros.
\newblock {\em Adv. Math. 410\/} (2022), Paper No. 108701, 28.

\bibitem{ABK}
{\sc Assiotis, T., Bailey, E.~C., and Keating, J.~P.}
\newblock On the moments of the moments of the characteristic polynomials of
  {H}aar distributed symplectic and orthogonal matrices.
\newblock {\em Ann. Inst. Henri Poincar\'e{} D 9}, 3 (2022), 567--604.

\bibitem{ABGS}
{\sc Assiotis, T., Bedert, B., Gunes, M.~A., and Soor, A.}
\newblock On a distinguished family of random variables and {P}ainlev\'e{}
  equations.
\newblock {\em Probab. Math. Phys. 2}, 3 (2021), 613--642.

\bibitem{MoMEhrhart}
{\sc Assiotis, T., Eriksson, E., and Ni, W.}
\newblock On the moments of moments of random matrices and {E}hrhart
  polynomials.
\newblock {\em Adv. in Appl. Math. 149\/} (2023), Paper No. 102539, 30.

\bibitem{AGKW}
{\sc Assiotis, T., Gunes, M.~A., Keating, J.~P., and Wei, F.}
\newblock Exchangeable arrays and integrable systems for characteristic
  polynomials of random matrices.
\newblock {\em arXiv preprint arXiv:2407.19233\/} (2024).

\bibitem{AssiotisKeatingMoM}
{\sc Assiotis, T., and Keating, J.~P.}
\newblock Moments of moments of characteristic polynomials of random unitary
  matrices and lattice point counts.
\newblock {\em Random Matrices Theory Appl. 10}, 2 (2021), Paper No. 2150019,
  16.

\bibitem{AKW}
{\sc Assiotis, T., Keating, J.~P., and Warren, J.}
\newblock On the joint moments of the characteristic polynomials of random
  unitary matrices.
\newblock {\em Int. Math. Res. Not. IMRN}, 18 (2022), 14564--14603.

\bibitem{BaileyKeatingMoM}
{\sc Bailey, E.~C., and Keating, J.~P.}
\newblock On the moments of the moments of the characteristic polynomials of
  random unitary matrices.
\newblock {\em Comm. Math. Phys. 371}, 2 (2019), 689--726.

\bibitem{BaileyKeatingZeta}
{\sc Bailey, E.~C., and Keating, J.~P.}
\newblock On the moments of the moments of {$\zeta(1/2+it)$}.
\newblock {\em J. Number Theory 223\/} (2021), 79--100.

\bibitem{BaileyKeatingSurvey}
{\sc Bailey, E.~C., and Keating, J.~P.}
\newblock Maxima of log-correlated fields: some recent developments.
\newblock {\em J. Phys. A 55}, 5 (2022), Paper No. 053001, 76.

\bibitem{BasorChenEhrhardt}
{\sc Basor, E., Chen, Y., and Ehrhardt, T.}
\newblock Painlev\'e{} {V} and time-dependent {J}acobi polynomials.
\newblock {\em J. Phys. A 43}, 1 (2010), 015204, 25.

\bibitem{BasorGeRubinstein}
{\sc Basor, E., Ge, F., and Rubinstein, M.~O.}
\newblock Some multidimensional integrals in number theory and connections with
  the {P}ainlev\'e{} {V} equation.
\newblock {\em J. Math. Phys. 59}, 9 (2018), 091404, 14.

\bibitem{Berestycki}
{\sc Berestycki, N.}
\newblock An elementary approach to {G}aussian multiplicative chaos.
\newblock {\em Electron. Commun. Probab. 22\/} (2017), Paper No. 27, 12.

\bibitem{BorodinDet}
{\sc Borodin, A.}
\newblock Determinantal point processes.
\newblock In {\em The {O}xford handbook of random matrix theory}. Oxford Univ.
  Press, Oxford, 2011, pp.~231--249.

\bibitem{BorodinOlshanski}
{\sc Borodin, A., and Olshanski, G.}
\newblock Infinite random matrices and ergodic measures.
\newblock {\em Comm. Math. Phys. 223}, 1 (2001), 87--123.

\bibitem{BourgadeBulk}
{\sc Bourgade, P., Erd\H~os, L., and Yau, H.-T.}
\newblock Universality of general {$\beta$}-ensembles.
\newblock {\em Duke Math. J. 163}, 6 (2014), 1127--1190.

\bibitem{BourgadeFalconet}
{\sc Bourgade, P., and Falconet, H.}
\newblock Liouville quantum gravity from random matrix dynamics.
\newblock {\em arXiv preprint arXiv:2206.03029\/} (2022).

\bibitem{BympGamburd}
{\sc Bump, D., and Gamburd, A.}
\newblock On the averages of characteristic polynomials from classical groups.
\newblock {\em Comm. Math. Phys. 265}, 1 (2006), 227--274.

\bibitem{ChhaibiMadauleNajnudel}
{\sc Chhaibi, R., Madaule, T., and Najnudel, J.}
\newblock On the maximum of the {${\rm C}\beta {\rm E}$} field.
\newblock {\em Duke Math. J. 167}, 12 (2018), 2243--2345.

\bibitem{ChhaibiNajnudel}
{\sc Chhaibi, R., and Najnudel, J.}
\newblock On the circle, $gmc^\gamma = \varprojlim c\beta e_n$ for $\gamma =
  \sqrt{\frac{2}{\beta}}, $ $( \gamma \leq 1 )$.
\newblock {\em arxiv:1904.00578\/} (2019).

\bibitem{CNN}
{\sc Chhaibi, R., Najnudel, J., and Nikeghbali, A.}
\newblock The circular unitary ensemble and the {R}iemann zeta function: the
  microscopic landscape and a new approach to ratios.
\newblock {\em Invent. Math. 207}, 1 (2017), 23--113.

\bibitem{ClaeysFahs}
{\sc Claeys, T., and Fahs, B.}
\newblock Random matrices with merging singularities and the {P}ainlev\'e{} {V}
  equation.
\newblock {\em SIGMA Symmetry Integrability Geom. Methods Appl. 12\/} (2016),
  Paper No. 031, 44.

\bibitem{ClaeysForkelKeating}
{\sc Claeys, T., Forkel, J., and Keating, J.~P.}
\newblock Moments of moments of the characteristic polynomials of random
  orthogonal and symplectic matrices.
\newblock {\em Proc. A. 479}, 2270 (2023), Paper No. 20220652, 22.

\bibitem{ClaeysGlesnerMinakovYang}
{\sc Claeys, T., Glesner, G., Minakov, A., and Yang, M.}
\newblock Asymptotics for averages over classical orthogonal ensembles.
\newblock {\em Int. Math. Res. Not. IMRN}, 10 (2022), 7922--7966.

\bibitem{ClaeysKrasovsky}
{\sc Claeys, T., and Krasovsky, I.}
\newblock Toeplitz determinants with merging singularities.
\newblock {\em Duke Math. J. 164}, 15 (2015), 2897--2987.

\bibitem{CFKRS}
{\sc Conrey, J.~B., Farmer, D.~W., Keating, J.~P., Rubinstein, M.~O., and
  Snaith, N.~C.}
\newblock Integral moments of {$L$}-functions.
\newblock {\em Proc. London Math. Soc. (3) 91}, 1 (2005), 33--104.

\bibitem{RandomDets}
{\sc Dal~Borgo, M., Hovhannisyan, E., and Rouault, A.}
\newblock Mod-{G}aussian convergence for random determinants.
\newblock {\em Ann. Henri Poincar\'e 20}, 1 (2019), 259--298.

\bibitem{Fahs}
{\sc Fahs, B.}
\newblock Uniform asymptotics of {T}oeplitz determinants with
  {F}isher-{H}artwig singularities.
\newblock {\em Comm. Math. Phys. 383}, 2 (2021), 685--730.

\bibitem{ForresterNuclearPhys}
{\sc Forrester, P.~J.}
\newblock Selberg correlation integrals and the {$1/r^2$} quantum many-body
  system.
\newblock {\em Nuclear Phys. B 388}, 3 (1992), 671--699.

\bibitem{ForresterNuclearPhysAdd}
{\sc Forrester, P.~J.}
\newblock Addendum to: ``{S}elberg correlation integrals and the {$1/r^2$}
  quantum many-body system'' [{N}uclear {P}hys.\ {B} {\bf 388} (1992), no.\ 3,
  671--699; {MR}1201273 (94e:33030)].
\newblock {\em Nuclear Phys. B 416}, 1 (1994), 377--385.

\bibitem{ForresterConj}
{\sc Forrester, P.~J.}
\newblock Integration formulas and exact calculations in the
  {C}alogero-{S}utherland model.
\newblock {\em Modern Phys. Lett. B 9}, 6 (1995), 359--371.

\bibitem{ForresterBook}
{\sc Forrester, P.~J.}
\newblock {\em Log-gases and random matrices}, vol.~34 of {\em London
  Mathematical Society Monographs Series}.
\newblock Princeton University Press, Princeton, NJ, 2010.

\bibitem{ForresterDifferential}
{\sc Forrester, P.~J.}
\newblock Differential identities for the structure function of some random
  matrix ensembles.
\newblock {\em J. Stat. Phys. 183}, 2 (2021), Paper No. 33, 28.

\bibitem{forrester2025dualities}
{\sc Forrester, P.~J.}
\newblock Dualities in random matrix theory.
\newblock {\em arXiv preprint arXiv:2501.07144\/} (2025).

\bibitem{ForresterRahman}
{\sc Forrester, P.~J., and Rahman, A.~A.}
\newblock Relations between moments for the {J}acobi and {C}auchy random matrix
  ensembles.
\newblock {\em J. Math. Phys. 62}, 7 (2021), Paper No. 073302, 22.

\bibitem{ForresterRains}
{\sc Forrester, P.~J., and Rains, E.~M.}
\newblock A {F}uchsian matrix differential equation for {S}elberg correlation
  integrals.
\newblock {\em Comm. Math. Phys. 309}, 3 (2012), 771--792.

\bibitem{FyodorovBouchaud}
{\sc Fyodorov, Y.~V., and Bouchaud, J.-P.}
\newblock Freezing and extreme-value statistics in a random energy model with
  logarithmically correlated potential.
\newblock {\em J. Phys. A 41}, 37 (2008), 372001, 12.

\bibitem{FyodorovGnutzmannKeating}
{\sc Fyodorov, Y.~V., Gnutzmann, S., and Keating, J.~P.}
\newblock Extreme values of {CUE} characteristic polynomials: a numerical
  study.
\newblock {\em J. Phys. A 51}, 46 (2018), 464001, 22.

\bibitem{FyodorovHiaryKeating}
{\sc Fyodorov, Y.~V., Hiary, G.~A., and Keating, J.~P.}
\newblock Freezing transition, characteristic polynomials of random matrices,
  and the riemann zeta function.
\newblock {\em Phys. Rev. Lett. 108\/} (Apr 2012), 170601.

\bibitem{FyodorovKeating}
{\sc Fyodorov, Y.~V., and Keating, J.~P.}
\newblock Freezing transitions and extreme values: random matrix theory, and
  disordered landscapes.
\newblock {\em Philos. Trans. R. Soc. Lond. Ser. A Math. Phys. Eng. Sci. 372},
  2007 (2014), 20120503, 32.

\bibitem{FyodorovSimm}
{\sc Fyodorov, Y.~V., and Simm, N.~J.}
\newblock On the distribution of the maximum value of the characteristic
  polynomial of {GUE} random matrices.
\newblock {\em Nonlinearity 29}, 9 (2016), 2837--2855.

\bibitem{Harper}
{\sc Harper, A.~J.}
\newblock On the partition function of the riemann zeta function, and the
  fyodorov--hiary--keating conjecture.
\newblock {\em arXiv preprint arXiv:1906.05783\/} (2019).

\bibitem{HughesKeatingOConnell}
{\sc Hughes, C.~P., Keating, J.~P., and O'Connell, N.}
\newblock On the characteristic polynomial of a random unitary matrix.
\newblock {\em Comm. Math. Phys. 220}, 2 (2001), 429--451.

\bibitem{JohanssonDet}
{\sc Johansson, K.}
\newblock Random matrices and determinantal processes.
\newblock In {\em Mathematical statistical physics}. Elsevier B. V., Amsterdam,
  2006, pp.~1--55.

\bibitem{KRRR}
{\sc Keating, J.~P., Rodgers, B., Roditty-Gershon, E., and Rudnick, Z.}
\newblock Sums of divisor functions in {$\Bbb{F}_q[t]$} and matrix integrals.
\newblock {\em Math. Z. 288}, 1-2 (2018), 167--198.

\bibitem{KeatingSnaithSympOrth}
{\sc Keating, J.~P., and Snaith, N.~C.}
\newblock Random matrix theory and {$L$}-functions at {$s=1/2$}.
\newblock {\em Comm. Math. Phys. 214}, 1 (2000), 91--110.

\bibitem{KeatingSnaith}
{\sc Keating, J.~P., and Snaith, N.~C.}
\newblock Random matrix theory and {$\zeta(1/2+it)$}.
\newblock {\em Comm. Math. Phys. 214}, 1 (2000), 57--89.

\bibitem{KeatingWong}
{\sc Keating, J.~P., and Wong, M.~D.}
\newblock On the critical-subcritical moments of moments of random
  characteristic polynomials: a {GMC} perspective.
\newblock {\em Comm. Math. Phys. 394}, 3 (2022), 1247--1301.

\bibitem{KillipNenciu}
{\sc Killip, R., and Nenciu, I.}
\newblock Matrix models for circular ensembles.
\newblock {\em International Mathematics Research Notices 2004}, 50 (01 2004),
  2665--2701.

\bibitem{KillipStoiciu}
{\sc Killip, R., and Stoiciu, M.}
\newblock Eigenvalue statistics for {CMV} matrices: from {P}oisson to clock via
  random matrix ensembles.
\newblock {\em Duke Math. J. 146}, 3 (2009), 361--399.

\bibitem{LambertNajnudel}
{\sc Lambert, G., and Najnudel, J.}
\newblock Subcritical multiplicative chaos and the characteristic polynomial of
  the {C}$\beta ${E}.
\newblock {\em arXiv preprint arXiv:2407.19817\/} (2024).

\bibitem{LambertPaquetteAiry}
{\sc Lambert, G., and Paquette, E.}
\newblock Strong approximation of gaussian $\beta$-ensemble characteristic
  polynomials: the edge regime and the stochastic airy function.
\newblock {\em arXiv preprint arXiv:2009.05003\/} (2020).

\bibitem{LambertPaquetteBulk}
{\sc Lambert, G., and Paquette, E.}
\newblock Bulk asymptotics of the gaussian $\beta$-ensemble characteristic
  polynomial.
\newblock {\em arXiv preprint arXiv:2508.01458\/} (2025).

\bibitem{Levin}
{\sc Levin, B.~J.}
\newblock {\em Distribution of zeros of entire functions}, revised~ed., vol.~5
  of {\em Translations of Mathematical Monographs}.
\newblock American Mathematical Society, Providence, RI, 1980.
\newblock Translated from the Russian by R. P. Boas, J. M. Danskin, F. M.
  Goodspeed, J. Korevaar, A. L. Shields and H. P. Thielman.

\bibitem{LiValko}
{\sc Li, Y., and Valk\'o, B.}
\newblock Operator level limit of the circular {J}acobi {$\beta$}-ensemble.
\newblock {\em Random Matrices Theory Appl. 11}, 4 (2022), Paper No. 2250043,
  41.

\bibitem{NajnudelZeta}
{\sc Najnudel, J.}
\newblock On the extreme values of the {R}iemann zeta function on random
  intervals of the critical line.
\newblock {\em Probab. Theory Related Fields 172}, 1-2 (2018), 387--452.

\bibitem{NikulaSaksmanWebb}
{\sc Nikula, M., Saksman, E., and Webb, C.}
\newblock Multiplicative chaos and the characteristic polynomial of the {CUE}:
  the {$L^1$}-phase.
\newblock {\em Trans. Amer. Math. Soc. 373}, 6 (2020), 3905--3965.

\bibitem{Okounkov}
{\sc Okounkov, A.}
\newblock On n-point correlations in the log-gas at rational temperature.
\newblock {\em arXiv preprint hep-th/9702001\/} (1997).

\bibitem{Osada1}
{\sc Osada, H.}
\newblock Dirichlet form approach to infinite-dimensional {W}iener processes
  with singular interactions.
\newblock {\em Comm. Math. Phys. 176}, 1 (1996), 117--131.

\bibitem{Osada2}
{\sc Osada, H.}
\newblock Non-collision and collision properties of {D}yson's model in infinite
  dimension and other stochastic dynamics whose equilibrium states are
  determinantal random point fields.
\newblock In {\em Stochastic analysis on large scale interacting systems},
  vol.~39 of {\em Adv. Stud. Pure Math.} Math. Soc. Japan, Tokyo, 2004,
  pp.~325--343.

\bibitem{PaquetteZeitouni1}
{\sc Paquette, E., and Zeitouni, O.}
\newblock The maximum of the {CUE} field.
\newblock {\em Int. Math. Res. Not. IMRN}, 16 (2018), 5028--5119.

\bibitem{PaquetteZeitouni2}
{\sc {Paquette}, E., and {Zeitouni}, O.}
\newblock {The extremal landscape for the C$\beta$E ensemble}.
\newblock {\em arXiv e-prints\/} (Sept. 2022), 118pp.

\bibitem{qu2025pair}
{\sc Qu, Y., and Valk{\'o}, B.}
\newblock On the pair correlation function of the $sine_\beta$ process.
\newblock {\em arXiv preprint arXiv:2509.15446\/} (2025).

\bibitem{Remy}
{\sc Remy, G.}
\newblock The {F}yodorov-{B}ouchaud formula and {L}iouville conformal field
  theory.
\newblock {\em Duke Math. J. 169}, 1 (2020), 177--211.

\bibitem{RhodesVargas}
{\sc Rhodes, R., and Vargas, V.}
\newblock Gaussian multiplicative chaos and applications: a review.
\newblock {\em Probab. Surv. 11\/} (2014), 315--392.

\bibitem{Suzuki1}
{\sc Suzuki, K.}
\newblock On the ergodicity of interacting particle systems under number
  rigidity.
\newblock {\em Probab. Theory Related Fields 188}, 1-2 (2024), 583--623.

\bibitem{Suzuki2}
{\sc Suzuki, K.}
\newblock Curvature bound of {D}yson {B}rownian motion.
\newblock {\em Comm. Math. Phys. 406}, 7 (2025), Paper No. 154, 56.

\bibitem{ValkoViragCarousel}
{\sc Valk\'o, B., and Vir\'ag, B.}
\newblock Continuum limits of random matrices and the {B}rownian carousel.
\newblock {\em Invent. Math. 177}, 3 (2009), 463--508.

\bibitem{ValkoViragOperators}
{\sc Valk\'o, B., and Vir\'ag, B.}
\newblock The {$\rm Sine_\beta$} operator.
\newblock {\em Invent. Math. 209}, 1 (2017), 275--327.

\bibitem{ValkoVirag}
{\sc Valk\'o, B., and Vir\'ag, B.}
\newblock The many faces of the stochastic zeta function.
\newblock {\em Geom. Funct. Anal. 32}, 5 (2022), 1160--1231.

\bibitem{Webb}
{\sc Webb, C.}
\newblock The characteristic polynomial of a random unitary matrix and
  {G}aussian multiplicative chaos---the {$L^2$}-phase.
\newblock {\em Electron. J. Probab. 20\/} (2015), no. 104, 21.

\bibitem{Widom}
{\sc Widom, H.}
\newblock Toeplitz determinants with singular generating functions.
\newblock {\em Amer. J. Math. 95\/} (1973), 333--383.

\bibitem{ForresterWitte}
{\sc Witte, N.~S., and Forrester, P.~J.}
\newblock Gap probabilities in the finite and scaled {C}auchy random matrix
  ensembles.
\newblock {\em Nonlinearity 13}, 6 (2000), 1965--1986.

\end{thebibliography}

\bigskip 

\noindent{\sc School of Mathematics, University of Edinburgh, James Clerk Maxwell Building, Peter Guthrie Tait Rd, Edinburgh EH9 3FD, U.K.}\newline
\href{mailto:theo.assiotis@ed.ac.uk}{\small theo.assiotis@ed.ac.uk}

\bigskip

\noindent{\sc School of Mathematics, University of Bristol, Fry Building, Woodland Road, Bristol, BS8 1UG, U.K.}\newline
\href{mailto:joseph.najnudel@bristol.ac.uk}{\small joseph.najnudel@bristol.ac.uk}
\end{document}